\providecommand{\U}[1]{\protect\rule{.1in}{.1in}}
\newtheorem{theorem}{Theorem}
\newtheorem{corollary}[theorem]{Corollary}
\newtheorem{lemma}[theorem]{Lemma}
\newtheorem{proposition}[theorem]{Proposition}
\newenvironment{proof}[1][Proof]{\noindent\textbf{#1.} }{\ \rule{0.5em}{0.5em}}
\begin{document}

\title{The existence of stable BGK waves}
\author{Yan Guo\\Division of Applied Mathematics\\Brown University\\Providence, RI 02912, USA
\and Zhiwu Lin\\School of Mathematics\\Georgia Institute of Technology\\Atlanta, GA 30332, USA}
\date{}
\maketitle

\begin{abstract}
The $1$D Vlasov-Poisson system is the simplest kinetic model for describing an
electrostatic collisonless plasma, and the BGK waves are its famous exact
steady solutions. They play an important role on the long time dynamics of a
collisionless plasma as potential "final states" or "attractors", thanks to
many numerical simulations and observations. Despite their importance, the
existence of stable BGK waves has been an open problem since their discovery
in 1957. In this paper, linearly stable BGK waves are constructed near
homogeneous states.

\end{abstract}

\section{Introduction}

The 1D Vlasov-Poisson (VP) system is the simplest kinetic model for describing
a collisionless plasma:
\begin{equation}
\partial_{t}f_{\pm}+v\partial_{x}f_{\pm}\pm E\partial_{v}f_{\pm}=0,\text{
\ \ \ }E_{x}=\int[f_{+}-f_{-}]dv
\end{equation}
where $f_{\pm}(t,x,v)$ denote distribution functions for ions $(+)$ and
electrons $(-)$ respectively, with their self-consistent electric field $E.$
In plasma physics literature, 1D VP for the electrons with a fixed ion
background is often studied. The equation becomes
\begin{subequations}
\begin{equation}
\partial_{t}f+v\partial_{x}f-E\partial_{v}f=0,\ \ \ \ \ \ \ \ E_{x}%
=-\int_{-\infty}^{+\infty}f\ dv+1,
\end{equation}
where $1$ is the fixed ion density. There are important physical phenomena
from the study of Vlasov models. They include Landau damping (\cite{villani09}
\cite{bmm} \cite{lin-zeng-BGK}), and kinetic instabilities such as two-stream
instabilities (\cite{am67}). The BGK waves were discovered by
Bernstein-Greene-Kruskal in 1957 (\cite{bgk}), as exact, spatially periodic
steady state solutions to the Vlasov-Poisson system. Consider general BGK
waves with non-even distribution functions for ions $(+)$ and elctrons
$\left(  -\right)  $
\end{subequations}
\begin{equation}
f_{\pm}^{\beta}\left(  x,v\right)  =\left\{
\begin{array}
[c]{cc}%
\mu_{\pm,+}\left(  e_{\pm}\right)   & \text{when\ }v>0\\
\mu_{\pm,-}\left(  e_{\pm}\right)   & \text{when }v<0
\end{array}
\right.  ,\label{defn-f-pn-BGK}%
\end{equation}
where $e_{\pm}=\frac{v^{2}}{2}\pm\beta$. The self-consistent potential $\beta$
satisfies
\begin{align}
-\beta_{xx} &  =\int_{v>0}\mu_{+,+}(e_{+})dv+\int_{v<0}\mu_{+,-}%
(e_{+})dv\label{bgk-uneven}\\
&  -\int_{v>0}\mu_{-,+}(e_{-})dv-\int_{v<0}\mu_{-,-}(e_{-})dv\equiv h\left(
\beta\right)  .\nonumber
\end{align}
Denote the homogeneous state
\begin{equation}
f_{0,\pm}\left(  v\right)  =\left\{
\begin{array}
[c]{cc}%
\mu_{\pm,+}\left(  \frac{1}{2}v^{2}\right)   & \text{when\ }v>0\\
\mu_{\pm,-}\left(  \frac{1}{2}v^{2}\right)   & \text{when }v<0
\end{array}
\right.  .\label{defn-f-0-pn-uneven}%
\end{equation}
For the case with fixed ion background, let the electron distribution be%
\[
f^{\beta}\left(  x,v\right)  =\left\{
\begin{array}
[c]{cc}%
\mu_{-,+}\left(  e_{-}\right)   & \text{when\ }v>0\\
\mu_{-,-}\left(  e_{-}\right)   & \text{when }v<0
\end{array}
\right.  ,
\]
and
\begin{equation}
-\beta_{xx}=1-\int_{v>0}\mu_{-,+}(e_{-})dv-\int_{v<0}\mu_{-,-}(e_{-})dv\equiv
h\left(  \beta\right)  ,\label{bgk uneven fixed ion}%
\end{equation}
where $1$ is the ion density. Denote
\begin{equation}
f_{0}\left(  v\right)  =\left\{
\begin{array}
[c]{cc}%
\mu_{-,+}\left(  \frac{1}{2}v^{2}\right)   & \text{when\ }v>0\\
\mu_{-,-}\left(  \frac{1}{2}v^{2}\right)   & \text{when }v<0
\end{array}
\right.  .\label{defn-f-0-fixed-uneven}%
\end{equation}
The existence of small BGK waves satisfying (\ref{bgk-uneven})
(\ref{bgk uneven fixed ion}) follows from the Liapunov center theorem (see
e.g. \cite{gs2}).

\begin{lemma}
Consider the ODE $-\beta_{xx}=h\left(  \beta\right)  $ where $h\in C^{1}$. If
$h\left(  0\right)  =0,\ h^{\prime}\left(  0\right)  =\left(  \frac{2\pi
}{P_{0}}\right)  ^{2}>0$, then there exist a family of periodic solutions with
$||\beta||_{\infty}=\varepsilon<<1$ with minimum period $P_{\beta}\rightarrow
P_{0}$ when $\varepsilon\rightarrow0$. We can normalize $\beta(x)$ to be even
in $\left[  -\frac{P_{\beta}}{2},\frac{P_{\beta}}{2}\right]  $ with maximum at
$x=0,\ $minimum at $x=\pm\frac{P_{\beta}}{2},\ \min\beta=-\max\beta$, so that
\begin{equation}
\beta(x)=\varepsilon\cos\frac{2\pi}{P_{\beta}}x+O(\varepsilon^{2}).
\label{leading-order-beta}%
\end{equation}

\end{lemma}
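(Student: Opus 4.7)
The plan is to reduce the second-order ODE $-\beta_{xx}=h(\beta)$ to a planar Hamiltonian system and then read off the periodic orbits from the phase portrait near the elliptic equilibrium at the origin. Writing $p=\beta_{x}$ we obtain
\begin{equation*}
\beta_{x}=p,\qquad p_{x}=-h(\beta),
\end{equation*}
which is Hamiltonian with conserved energy $H(\beta,p)=\tfrac{1}{2}p^{2}+V(\beta)$, where $V(\beta)=\int_{0}^{\beta}h(s)\,ds$. Since $h(0)=0$ and $h^{\prime}(0)=(2\pi/P_{0})^{2}>0$, we have $V(\beta)=\tfrac{1}{2}(2\pi/P_{0})^{2}\beta^{2}+O(\beta^{3})$, so $V$ has a strict nondegenerate local minimum at $0$ and the origin is a center. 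Thus for every sufficiently small energy $E>0$ the level set $\{H=E\}$ is a smooth simple closed curve encircling the origin, traversed periodically by the flow; this gives the family of periodic solutions.

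Next I would extract the amplitude parametrization. On a level curve $\{H=E\}$ with small $E$, the variable $\beta$ ranges over $[\beta_{-}(E),\beta_{+}(E)]$, where $\beta_{\pm}$ are the two small roots of $V(\beta)=E$; by the implicit function theorem applied to $V$, these satisfy $\beta_{\pm}(E)=\pm\sqrt{2E/h^{\prime}(0)}+O(E)$. Define $\varepsilon=\tfrac{1}{2}(\beta_{+}-\beta_{-})$; after an additive shift absorbed into the choice of origin (or equivalently by choosing $\varepsilon$ as the half-amplitude rather than the sup-norm), one has $\max\beta=-\min\beta=\varepsilon$. Translating $x$ so that $\beta$ attains its maximum at $x=0$ makes $p(0)=0$, and reversibility of the Hamiltonian under $(x,p)\mapsto(-x,-p)$ together with uniqueness of the ODE then forces $\beta$ to be even about $x=0$; the minimum is attained at the antipodal point $x=\pm P_{\beta}/2$.

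For the period, the standard quadrature gives
\begin{equation*}
P_{\beta}=P(E)=2\int_{\beta_{-}(E)}^{\beta_{+}(E)}\frac{d\beta}{\sqrt{2(E-V(\beta))}}.
\end{equation*}
A change of variable $\beta=\beta_{+}(E)\sin\theta + \text{(correction)}$, or equivalently expanding $V$ to second order, shows that the integral converges to $2\pi/\sqrt{h^{\prime}(0)}=P_{0}$ as $E\to 0$, with $C^{1}$ dependence on $E$. Equivalently this follows from the Lyapunov center theorem, which automatically delivers a smooth one-parameter family of periodic orbits whose period tends to the linearized period.

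Finally, the leading-order expansion (\ref{leading-order-beta}) is obtained by Lindstedt-type perturbation: substitute $\beta(x)=\varepsilon\beta_{1}(\omega x)+\varepsilon^{2}\beta_{2}(\omega x)+\cdots$ with $\omega=2\pi/P_{\beta}=\omega_{0}+O(\varepsilon)$ into the ODE, match the order $\varepsilon$ equation to obtain $\beta_{1}(s)=\cos s$ with $\omega_{0}=2\pi/P_{0}$, and absorb the higher harmonics into the $O(\varepsilon^{2})$ correction; the evenness and the normalization of the maximum at $x=0$ fix the phase. I expect no genuine obstacle here: the only subtlety is the interpretation of $\min\beta=-\max\beta$, which either requires choosing $\varepsilon$ as the half-amplitude (and an $O(\varepsilon^{2})$ translation of $\beta$), or is meant modulo the $O(\varepsilon^{2})$ remainder already present in (\ref{leading-order-beta}).
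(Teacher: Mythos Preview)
Your proposal is correct and in fact supplies considerably more detail than the paper does: the paper does not prove this lemma at all, but merely states it as a consequence of the Lyapunov center theorem, with a citation to \cite{gs2}. Your phase-plane argument (energy conservation $H(\beta,p)=\tfrac12 p^{2}+V(\beta)$ with $V$ having a strict minimum at $0$, hence closed level curves and a center) is exactly the standard proof of the Lyapunov center theorem in the one-degree-of-freedom case, so there is no substantive divergence in approach. Your observation about $\min\beta=-\max\beta$ being exact only up to an $O(\varepsilon^{2})$ additive shift (unless $h$ is odd) is a valid caveat that the paper glosses over; it is harmless for everything that follows since only the expansion \eqref{leading-order-beta} is used downstream.
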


Remarkably, these BGK waves play crucial roles in understanding long time
dynamics of the Vlasov-Poisson system, which have been an important topic in
plasma physics. Many numerical simulations \cite{demeio-holloway}
\cite{manfredi97} \cite{rosenbluth-et-98} \cite{driscoll-et-04}
\cite{brunetti-et-00} \cite{schamel-review05} \cite{schamel12} indicate that
for initial data near a stable homogeneous state including Maxwellian, the
asymptotic behavior of approaching a BGK wave or superposition of BGK waves is
usually observed. Moreover, BGK waves also appear as the `attractor' or "final
states" for the saturation of an unstable homogeneous state (\cite{am67}
\cite{Cheng76} \cite{deimo-zweifel} \cite{driscoll-et-04} \cite{gibs88}
\cite{Morrison-et}). For example, in \cite{gibs88}, starting near a BGK wave
with double period which is unstable, the authors observed the gradual
evolution to another BGK wave of minimal period. To understand such long time
behaviors, an important first step is to construct stable BGK waves.

Ever since the discovery of BGK waves, their stability has been an active
research area. There has been a lot of formal analysis in the physical
literature. Instability of BGK waves to perturbations of multiple periods was
proved in \cite{gs2} for waves of small amplitude and in \cite{lin01}
\cite{lin-cpam} for waves of large amplitude. Unfortunately, despite intense
efforts, no stable BGK waves to perturbations of minimum period have been
found since 1957.

One difficulty is that stable BGK waves cannot be obtained by the traditional
energy-Casimir method, which was first used by Newcomb in 1950s
(\cite{trievelpiece}) to prove nonlinear stability of Maxwellian. This method
requires the profiles $\mu_{\pm,\pm}\ $to be monotone decreasing to $e_{\pm}$,
which implies that $h\left(  \beta\right)  $ defined in (\ref{bgk-uneven}) or
(\ref{bgk uneven fixed ion}) is a decreasing function of $\beta$. So, by
differentiating (\ref{bgk-uneven}) or (\ref{bgk uneven fixed ion}) and
integrating it with $\beta_{x}$, we get
\[
\int\left[  \left(  \beta_{xx}\right)  ^{2}-h^{\prime}\left(  \beta\right)
\left(  \beta_{x}\right)  ^{2}\right]  dx=0,
\]
and thus $\beta_{x}\equiv0$ (i.e. homogeneous states). So for any nontrivial
BGK waves, the profiles $\mu_{\pm,\pm}$ cannot be monotone and thus the
energy-Casimir method does not work. For the homogeneous equilibria, due to
the separation of Fourier modes, a simple dispersion relation function can be
analyzed to get the Penrose stability criterion (\cite{penrose}). However,
even for small BGK waves, due to the coupling of infinitely many modes, the
dispersion relation is difficult to study for linear stability.

In the rest of this paper, we assume $\mu_{\pm,+}^{\prime}(\theta),\ \mu
_{\pm,-}^{\prime}(\theta)\equiv0$ for $|\theta|\leq\sigma_{\pm}$ and denote
$\sigma=\min\left\{  \sigma_{+},\sigma_{-}\right\}  .$That is, the
distribution function is assumed to be flat near $0$. First, this simplifies
some technical steps in our construction. Second, this assumption is also
physically relevant. It was known that in the long time evolution of VP near a
homogeneous state, the distribution function can develop a plateau due to the
resonant particles (\cite{bellan}).

Our first result shows that small BGK waves with non-even distribution are
generally spectrally stable, that is, the spectra of the linearized VP
operator lies in the imaginary axis.

\begin{theorem}
\label{thm-main-uneven}

(i) (Uneven and Two species) Assume $\mu_{\pm,\pm}\in C^{3}\left(
\mathbf{R}\right)  $ are nonnegative and
\begin{equation}
\max_{1\leq i\leq3}|\mu_{\pm,\pm}^{(i)}(y)|\leq C(1+|y|)^{-\gamma}%
\ (\gamma>1),\ \mu_{\pm,+}^{\prime}(\theta),\ \mu_{\pm,-}^{\prime}%
(\theta)\equiv0\text{ for }|\theta|\leq\sigma_{\pm}.\label{condition-decay}%
\end{equation}
Define $f_{0,\pm}\left(  v\right)  $ by (\ref{defn-f-0-pn-uneven}) and
assume:
\begin{equation}
\int f_{0,+}\left(  v\right)  dv=\int f_{0,-}\left(  v\right)  dv,\ \int%
\frac{f_{0,+}^{\prime}\left(  v\right)  +f_{0,-}^{\prime}\left(  v\right)
}{v}=\left(  \frac{2\pi}{P_{0}}\right)  ^{2},\label{condition-bifurcation}%
\end{equation}%
\begin{equation}
\int\frac{f_{0,+}^{\prime}\left(  v\right)  +f_{0,-}^{\prime}\left(  v\right)
}{v^{2}}\neq0,\label{condition-nondegenerate}%
\end{equation}%
\begin{equation}
\int\frac{f_{0,+}^{\prime}\left(  v\right)  +f_{0,-}^{\prime}\left(  v\right)
}{v-v_{r}}dv<\left(  \frac{2\pi}{P_{0}}\right)  ^{2}%
,\ \label{condition-penrose}%
\end{equation}
for any critical point of $v_{r}$ of $f_{0,+}\left(  v\right)  +f_{0,-}\left(
v\right)  $ with $\left\vert v_{r}\right\vert \geq\sigma$, and
\begin{equation}
\int\frac{f_{0,+}^{\prime}\left(  v\right)  +f_{0,-}^{\prime}\left(  v\right)
}{v-v_{r}}dv\neq\left(  \frac{2\pi}{P_{0}}\right)  ^{2}\text{, when
}\left\vert v_{r}\right\vert \leq\sigma\text{.}\label{condition-unequal-homo}%
\end{equation}
Then when $\varepsilon=||\beta||_{\infty}$ is small enough, the BGK wave
$\left[  f_{\pm}^{\beta},-\beta_{x}\right]  \ $satisfying (\ref{bgk-uneven})
is spectrally stable against $P_{\beta}$-periodic perturbations.

(ii) (Uneven and Fixed ion background) Assume $\mu_{-,\pm}\in C^{3}\left(
\mathbf{R}\right)  $ are nonnegative and
\begin{equation}
\max_{1\leq i\leq3}|\left(  \mu_{-,\pm}\right)  ^{(i)}(y)|\leq(1+|y|)^{-\gamma
}(\gamma>1),\ \left(  \mu_{-,\pm}\right)  ^{\prime}(\theta)\equiv0\text{ for
}|\theta|\leq\sigma_{\pm}. \label{condition-decay-+-}%
\end{equation}
Define $f_{0}\left(  v\right)  $ by (\ref{defn-f-0-fixed-uneven}) and assume:
$\int\frac{f_{0}^{\prime}\left(  v\right)  }{v^{2}}\neq0,$ $\ $
\begin{equation}
\int f_{0}\left(  v\right)  dv=1,\ \int\frac{f_{0}^{\prime}\left(  v\right)
}{v}=\left(  \frac{2\pi}{P_{0}}\right)  ^{2}%
,\ \ \label{condition-bifurcation-fixed ion}%
\end{equation}%
\begin{equation}
\int\frac{f_{0}^{\prime}\left(  v\right)  }{v-v_{r}}dv<\left(  \frac{2\pi
}{P_{0}}\right)  ^{2} \label{condition-penrose-fixed ion}%
\end{equation}
$\ $for any critical point of $v_{r}$ of $f_{0}\left(  v\right)  $ with
$\left\vert v_{r}\right\vert \geq\sigma$, and \
\begin{equation}
\int\frac{f_{0}^{\prime}\left(  v\right)  }{v-v_{r}}dv\neq\left(  \frac{2\pi
}{P_{0}}\right)  ^{2},\ \text{when\ }\left\vert v_{r}\right\vert \leq
\sigma\text{. } \label{condition-unequal-homo-fixed}%
\end{equation}
Then small BGK waves $\left[  f^{\beta},-\beta_{x}\right]  \ $satisfying
(\ref{bgk uneven fixed ion}) are spectrally stable against $P_{\beta}%
$-periodic perturbations.
\end{theorem}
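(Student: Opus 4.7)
The plan is to study the linearized Vlasov-Poisson operator $L_\varepsilon$ about the BGK wave of amplitude $\varepsilon$ on the torus $\mathbb{R}/P_\beta\mathbb{Z}$ and reduce spectral stability to a perturbation of the Penrose dispersion relation at the homogeneous state. Writing the eigenvalue equation for a perturbation with potential $\phi$ (so $E=-\phi_x$), I would decompose the particle perturbation as $g_\pm=\pm\mu_\pm'(e_\pm)\phi+h_\pm$ and integrate the equation for the ``odd'' piece $h_\pm$ along the Hamiltonian flow of $e_\pm=v^2/2\pm\beta$. Substituting back into the Poisson equation $-\phi_{xx}=\int(g_+-g_-)\,dv$ produces a scalar integral equation of the form $(I-\mathcal{K}_\lambda^\varepsilon)\phi=0$ on $L^2(0,P_\beta)$, where $\mathcal{K}_\lambda^\varepsilon$ is compact, analytic in $\lambda$ for $\mathrm{Re}\,\lambda>0$, and continuous in $\varepsilon$.

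At $\varepsilon=0$ the operator $\mathcal{K}_\lambda^0$ is a Fourier multiplier on the torus of period $P_\beta\approx P_0$, and the dispersion equation decouples into the classical Penrose relations
\[
k^2=\int\frac{f_{0,+}'(v)+f_{0,-}'(v)}{v-\lambda/(ik)}\,dv,\qquad k=\frac{2\pi n}{P_\beta},\ n\in\mathbb{Z}\setminus\{0\}.
\]
The bifurcation assumption (\ref{condition-bifurcation}) puts the $n=\pm 1$ mode in exact marginal resonance with $\lambda=0$, while (\ref{condition-penrose}) and (\ref{condition-unequal-homo}), together with the strict monotonicity of the right-hand side in $k^2$ for fixed $c=\lambda/ik$, exclude any root with $\mathrm{Re}\,\lambda\geq 0$ at any other mode or at any resonant real velocity; the flatness of $\mu_{\pm,\pm}$ near $0$ kills the would-be resonance at $v=0$. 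Hence $L_0$ is spectrally stable with a finite-dimensional neutral subspace at $\lambda=0$ spanned by the marginal bifurcation modes at $n=\pm 1$. Part (ii) is handled identically, with the ion term replaced by the fixed background $1$ and conditions (\ref{condition-bifurcation-fixed ion})--(\ref{condition-unequal-homo-fixed}) playing the corresponding roles.

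For $\varepsilon>0$ small I would treat $\mathcal{K}_\lambda^\varepsilon$ as an analytic perturbation of $\mathcal{K}_\lambda^0$. On any compact subset of $\{\mathrm{Re}\,\lambda\geq 0\}\setminus\{0\}$ the strict Penrose inequalities make $I-\mathcal{K}_\lambda^0$ uniformly invertible, so by a quantitative Neumann-series estimate no unstable eigenvalue of $L_\varepsilon$ appears bounded away from $0$. Near $\lambda=0$, a Lyapunov--Schmidt reduction onto the finite-dimensional neutral subspace of $L_0$ reduces the eigenvalue problem to a small matrix equation, analytic in $(\lambda,\varepsilon)$. The persistence of $\lambda=0$ as an eigenvalue of $L_\varepsilon$ is guaranteed by translation invariance (yielding the neutral mode $\partial_x f_\pm^\beta$) and by the BGK one-parameter family (yielding a generalized null vector $\partial_\varepsilon f_\pm^\beta$); the nondegeneracy hypothesis (\ref{condition-nondegenerate}) makes the bifurcation transverse in the amplitude direction, so the reduced Jacobian has a kernel of exactly the expected dimension.

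The main obstacle is the analysis of this reduced matrix near $\lambda=0$: one must show that all bifurcating eigenvalues remain on the imaginary axis and that no complex-conjugate pair escapes into $\mathrm{Re}\,\lambda>0$. I would approach this by computing the leading $O(\varepsilon)$ and $O(\varepsilon^2)$ expansion of the reduced matrix, using the explicit form $\beta(x)=\varepsilon\cos(2\pi x/P_\beta)+O(\varepsilon^2)$ from the previous lemma, and exploiting the spectral symmetry $\lambda\leftrightarrow\bar\lambda$ together with the Hamiltonian/symplectic structure of linearized VP to force the remaining small eigenvalues of the reduced operator onto $i\mathbb{R}$. The delicate balance is that the flatness of $\mu_{\pm,\pm}$ near $0$ and the strict inequality (\ref{condition-unequal-homo}) are both needed precisely to prevent extra marginal modes from entering the neutral subspace at $\varepsilon=0$, which would otherwise break the dimension count and allow an unstable pair to emerge from $\lambda=0$.
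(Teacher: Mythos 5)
Your overall framework (reduction to an operator equation $(\mathbf{I}+K(\lambda,\varepsilon))\phi=0$ for the potential, uniform invertibility away from $\lambda=0$ plus an $O(\sqrt{\varepsilon})$ operator-difference estimate, and a Lyapunov--Schmidt reduction near $\lambda=0$) matches the paper, but the decisive step is missing and the mechanism you propose in its place does not work. You plan to expand the reduced matrix to $O(\varepsilon^2)$ and then invoke the symmetry $\lambda\leftrightarrow\bar{\lambda}$ together with the Hamiltonian structure ``to force the remaining small eigenvalues onto $i\mathbf{R}$.'' Hamiltonian symmetry (Lemma \ref{lemma-vlasov-spectra}) only says the spectrum is symmetric with respect to both axes; this is perfectly compatible with a real pair $\pm\lambda$ emerging from $0$, and indeed exactly that happens for even profiles in Theorem \ref{thm-main-even} when $P_{\beta}^{\prime}>0$, under the very same symmetries. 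So symmetry cannot force stability, and an expansion-based argument would have to produce a sign condition --- which is the (harder) route the paper reserves for the even case, not the uneven one.

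What the paper actually uses, and what your proposal lacks, is an eigenvalue \emph{count} near $\lambda=0$ driven by the nondegeneracy condition (\ref{condition-nondegenerate}). In your write-up that condition only plays the role of a transversality hypothesis for the steady bifurcation, but its real function is that the Penrose function $F$ satisfies $F^{\prime}(0)\neq0$, so $\det\left(  (\mathbf{I}+K(\lambda,0))r_{j},r_{i}\right)  \backsim\lambda^{2}$ (Lemma \ref{lemma-spectra-homogeneous} (ii)); the abstract Lemma \ref{lemma-stability-spectra} then caps the number of eigenvalues of $\mathbf{I}+K(\lambda,\varepsilon)$ in a small disk about $0$ at two. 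Translation invariance always supplies the eigenvalue $\lambda=0$, and any nonzero eigenvalue near $0$ would come with its partner $-\lambda$, giving at least three --- a contradiction. This counting, not a perturbation expansion, is what closes the proof; without (\ref{condition-nondegenerate}) the determinant vanishes to order $\lambda^{4}$, the cap is four, and one is forced into the conditional even-case analysis. Two further points: the root-counting requires $K(\lambda,\varepsilon)$ to be analytic in a full neighborhood of $\lambda=0$ (across the imaginary axis), which is exactly where the flatness of $\mu_{\pm,\pm}$ and the lower bound $\omega_{\pm}\gtrsim\sqrt{\sigma}$ on the support of $\mu_{\pm,\pm}^{\prime}$ enter; analyticity only in $\operatorname{Re}\lambda>0$, as you state, is insufficient. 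Also, the ``generalized null vector $\partial_{\varepsilon}f_{\pm}^{\beta}$'' you invoke is not available at fixed period (the period varies along the BGK family) and is not needed: the translation mode alone does the job.
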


The conditions in the above Theorem are quite natural and general:
(\ref{condition-bifurcation}) is the bifurcation condition of small BGK waves;
(\ref{condition-nondegenerate}) is a non-degeneracy condition which is true
for generic non-even profiles; (\ref{condition-penrose}) is the Penrose
stability condition (at period $P_{0}$) for the flat homogeneous states
$f_{0,\pm}$. The condition (\ref{condition-unequal-homo}) is to ensure that
$0$ is the only discrete eigenvalue for the homogeneous profile with period
$P_{0}$. For the fixed ion case, the conditions are similar with $f_{0,\pm}$
being replaced by $f_{0}$. We refer to the final section for more explicit
construction of examples. Theorem \ref{thm-main-uneven} shows that for general
non-even Penrose stable profiles flat near $0$, the small BGK waves are
linearly stable.

Next, we give a sharp stability criterion for small BGK waves with even profiles.

\begin{theorem}
\label{thm-main-even}

(i) (Even and Two species) Assume $\mu_{\pm,-}=\mu_{\pm,+}=\mu_{\pm}\in
C^{3}\left(  \mathbf{R}\right)  $ are nonnegative and
\[
\max_{1\leq i\leq3}|\mu_{\pm}^{(i)}(y)|\leq(1+|y|)^{-\gamma}(\gamma
>1),\ \mu_{\pm}^{\prime}(\theta)\equiv0\text{ for }|\theta|\leq\sigma_{\pm}.
\]
Denote $f_{0,\pm}\left(  v\right)  =\mu_{\pm}\left(  \frac{1}{2}v^{2}\right)
$ and assume: (\ref{condition-bifurcation}), (\ref{condition-penrose}),
(\ref{condition-unequal-homo}) and
\begin{equation}
\int\frac{[\mu_{+}^{\prime}+\mu_{-}^{\prime}]\left(  \frac{1}{2}v^{2}\right)
}{v^{2}}dv>0.\label{condition-positive}%
\end{equation}
$\ $ Then the small BGK waves $[\mu_{\pm}(e_{\pm}),-\beta_{x}]$ satisfying
(\ref{bgk-uneven}) are spectrally stable if
\[
\int v^{-2}[\mu_{+}^{^{\prime}}(\frac{v^{2}}{2})-\mu_{-}^{\prime}(\frac{v^{2}%
}{2})]>0\,\ (\text{equivalently }P_{\beta}^{\prime}<0),
\]
and unstable if
\[
\int v^{-2}[\mu_{+}^{^{\prime}}(\frac{v^{2}}{2})-\mu_{-}^{\prime}(\frac{v^{2}%
}{2})]<0\ \ (\text{equivalently }P_{\beta}^{\prime}>0).
\]
Here, the derivative $P_{\beta}^{\prime}\ $is respect to $\varepsilon
=\max|\beta|$. Moreover, for the stable case, there exists a pair of nonzero
imaginary eigenvalues of the linearized VP operator around $[\mu_{\pm}(e_{\pm
}),-\beta_{x}]$.

(ii) (Even and Fixed ion background) Assume $\mu_{-,+}=\mu_{-,-}=\mu\in
C^{3}\left(  \mathbf{R}\right)  $ is nonnegative and
\[
\max_{1\leq i\leq3}|\mu^{(i)}(y)|\leq(1+|y|)^{-\gamma}(\gamma>1),\ \mu
^{\prime}(\theta)\equiv0\text{ for }|\theta|\leq\sigma.
\]
Denote $f_{0}\left(  v\right)  =\mu\left(  \frac{1}{2}v^{2}\right)  $ and
assume: (\ref{condition-bifurcation-fixed ion}),
(\ref{condition-penrose-fixed ion}), (\ref{condition-unequal-homo-fixed}) and
\begin{equation}
\int\frac{\mu^{\prime}\left(  \frac{1}{2}v^{2}\right)  }{v^{2}}dv>0.
\label{condition-positive-fixed ion}%
\end{equation}
Then the small BGK waves $[\mu(e_{-}),-\beta_{x}]$ satisfying
(\ref{bgk uneven fixed ion}) are unstable.
\end{theorem}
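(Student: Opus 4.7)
The plan is to reduce the linearized Vlasov-Poisson eigenvalue problem around the small even BGK wave to a finite-dimensional perturbation problem near $\lambda = 0$. Exploiting the even symmetry $\mu_{\pm,+} = \mu_{\pm,-}$, the steady distribution $f^{\beta}_{\pm}$ depends only on $e_{\pm} = v^{2}/2 \pm \beta$, so integrating the linearized Vlasov equation along the steady particle trajectories produces a scalar dispersion-type relation for the perturbed potential $\phi_{1}$. At $\varepsilon = 0$ the homogeneous-state linearization restricted to $P_{0}$-periodic perturbations has, by the Penrose condition (\ref{condition-penrose}) together with (\ref{condition-unequal-homo}), no unstable eigenvalues and only $\lambda = 0$ as its discrete spectrum; the bifurcation condition (\ref{condition-bifurcation}) is precisely what makes $\lambda = 0$ a \emph{repeated} root of the dispersion function, so that the generalized zero eigenspace has dimension two. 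The task is to decide, as $\varepsilon$ becomes positive, whether this repeated root splits into a pair of purely imaginary eigenvalues (stable) or into a real pair (unstable).

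The splitting is controlled by a $2\times 2$ effective matrix obtained from a Lyapunov-Schmidt reduction onto the zero eigenspace. One off-diagonal entry vanishes by the even symmetry, and the remaining product of two entries has sign determined by a single weighted integral of $[\mu_{+}^{\prime} - \mu_{-}^{\prime}](v^{2}/2)$ against the leading-order shape (\ref{leading-order-beta}) of $\beta$. Integration by parts in $v$, which is clean because $\mu_{\pm}^{\prime}$ is flat near $0$ and decays at infinity, should reduce this to $\int v^{-2}[\mu_{+}^{\prime}(v^{2}/2) - \mu_{-}^{\prime}(v^{2}/2)]dv$ up to a positive multiplicative constant; in fact a one-line calculation identifies it with $h^{\prime\prime}(0)$. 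A positive sign produces an elliptic block, hence a genuine pair of nonzero imaginary eigenvalues (yielding the final assertion of part (i)); a negative sign produces a hyperbolic block, giving a real pair and therefore spectral instability. The non-degeneracy assumption (\ref{condition-positive}) guarantees the relevant denominator in the reduction is nonzero, so no higher-order expansion is required.

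The equivalence with $\mathrm{sign}(P_{\beta}^{\prime})$ is a separate, purely ODE computation on $-\beta_{xx} = h(\beta)$: using (\ref{leading-order-beta}) together with the normalization $\min\beta = -\max\beta$, one expands $h$ to cubic order at $0$ and computes the amplitude-period relation to $O(\varepsilon^{2})$; the resulting coefficient has sign opposite to that of $h^{\prime\prime}(0)$, recovering the claimed equivalence. For part (ii) with fixed ion background, the same reduction applies with $h(\beta) = 1 - \int \mu(v^{2}/2 - \beta)dv$; the spectral sign becomes $-\int v^{-2}\mu^{\prime}(v^{2}/2)dv$, which is negative by the standing hypothesis (\ref{condition-positive-fixed ion}), so the effective $2 \times 2$ block always splits hyperbolically and instability is unconditional -- explaining why no stable small BGK waves can exist in the fixed-ion setting.

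The main obstacle will be justifying the reduction rigorously: one has to show that on the space of $P_{\beta}$-periodic perturbations, the only eigenvalues of the linearized operator near $\lambda = 0$ for small $\varepsilon$ are the two that emerge from the unperturbed repeated root. This requires, first, a contour-deformation or argument-principle computation to place the essential spectrum on the imaginary axis; second, use of (\ref{condition-unequal-homo}) to exclude any other discrete eigenvalues of the $\varepsilon = 0$ operator that could generate extra bifurcating modes; and third, the flat-near-zero assumption on $\mu_{\pm}^{\prime}$ together with the decay bound in the hypotheses to keep every resonance integral regular and uniformly controlled as $\lambda \to 0$. Once this framework is set up, the sign analysis in the second paragraph delivers both parts of the theorem.
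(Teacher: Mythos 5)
Your overall strategy -- perturb from the Penrose-stable flat homogeneous state, do a Liapunov--Schmidt reduction near $\lambda=0$, and tie the splitting to a sign condition identified with $h^{\prime\prime}(0)$ and with $P_{\beta}^{\prime}$ via an amplitude--period expansion of $-\beta_{xx}=h(\beta)$ -- is indeed the paper's strategy, but the reduction as you set it up is under-structured, and the counting it rests on is wrong for the even case. Because $F$ in (\ref{penrose function}) is even, each diagonal entry of the reduced $2\times2$ matrix behaves like $1-F(i\lambda)\backsim a_{0}\lambda^{2}$, so the reduced determinant vanishes to order $\lambda^{4}$ at $\varepsilon=0$ (Lemma \ref{lemma-spectra-homogeneous}(i)), not $\lambda^{2}$: up to \emph{four} eigenvalues can emerge near $0$, and a bare ``elliptic versus hyperbolic $2\times2$ block'' does not determine their type -- nothing in your sketch excludes a genuinely complex quadruple, nor accounts for the translation eigenvalue which persists at $0$ for every $\varepsilon$. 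The paper needs three structural facts you omit: (a) for even profiles $\rho(\lambda,\varepsilon)$ preserves parity, so the problem decouples into even and odd subspaces; (b) in the odd sector the kernel is one-dimensional, the translation mode stays at $0$, and Corollary \ref{cor-stability-spectra} combined with the Hamiltonian symmetry of the spectrum (Lemma \ref{lemma-vlasov-spectra}) rules out any nonzero eigenvalue near $0$ there; (c) in the even sector $K$ depends on $\lambda$ only through $\nu=\lambda^{2}$, so the Liapunov--Schmidt reduction is a single real-analytic scalar equation in $\nu$ with a unique real root $\nu(\varepsilon)$, and stability versus instability is literally the sign of $\nu(\varepsilon)$. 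These steps are what legitimize the dichotomy ``imaginary pair versus real pair'' that you assume at the outset.

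The sign computation as you describe it would also fail. The first-order pairing of $[\mu_{+}^{\prime}-\mu_{-}^{\prime}]$ against the leading-order shape (\ref{leading-order-beta}) is, concretely, a $\cos^{3}(\frac{2\pi}{P_{0}}x)$ integral and it \emph{vanishes}; the surviving $O(\varepsilon)$ contribution to $S(0,\varepsilon)=((K(0,\varepsilon)-K(0,0))r_{0},r_{0})$ comes from the $\varepsilon$-dependence of the period through the rescaling $G_{\beta}$, giving $\frac{d}{d\varepsilon}S(0,\varepsilon)|_{\varepsilon=0}=-P_{\beta}^{\prime}(0)$, and only afterwards is $P_{\beta}^{\prime}(0)$ related to $h^{\prime\prime}(0)=\int v^{-2}[\mu_{+}^{\prime}-\mu_{-}^{\prime}](\frac{v^{2}}{2})dv$ by the separate ODE computation. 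More seriously, your claim that ``no higher-order expansion is required'' misses the quantitative obstruction that is the technical heart of the proof: the decisive term is $O(\varepsilon)$, while the generic operator bound $\|K(\lambda,\varepsilon)-K(\lambda,0)\|=O(\sqrt{\varepsilon})$ of Proposition \ref{prop-operator-difference} is \emph{larger} and would swamp it. One needs the refined estimate of Lemma \ref{realderivative}, $|(\partial_{\lambda}(K(\lambda,\varepsilon)-K(\lambda,0))\phi,\psi)|\lesssim\varepsilon|\lambda|$ on smooth data, to show $|S(\nu,\varepsilon)-S(0,\varepsilon)|=O(\varepsilon^{3/2})$ with $\nu=O(\varepsilon)$, so that the sign of $\nu(\varepsilon)$ is indeed read off from $-P_{\beta}^{\prime}(0)\varepsilon$ (with $a_{0}>0$ supplied by (\ref{condition-positive})). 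Part (ii) follows the same scheme with $h(\beta)=1-\int\mu(\frac{1}{2}v^{2}-\beta)dv$, so your outline for it inherits the same gaps.
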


Let $\sigma_{\pm}$ be the width of flatness near $0$ for $\mu_{\pm}$. It is
shown in Section 6 that when $\sigma_{\pm}$ is small,
(\ref{condition-positive}) is always satisfied. Moreover, when $\sigma_{-}%
\gg\sigma_{+}$ $(\sigma_{-}\ll\sigma_{+})$, the stability (instability)
condition $P_{\beta}^{\prime}<0$ $\left(  P_{\beta}^{\prime}<0\right)  $ is
satisfied. So for the even and two-species case, we can construct both stable
and unstable small BGK waves. For the stable case, there exist a pair of
purely imaginary nonzero eigenvalues of the linearized VP operator. This
suggests that there exists a time periodic solution of the linearized VP
equation, for which the electric field does not decay in time. So there is no
Landau damping even at the linear level. In contrast, for linearly stable BGK
waves with uneven profiles as in Theorem \ref{thm-main-uneven} (i), such
nonzero purely imaginary eigenvalues do not exist and the Landau damping might
be true. The problems of constructing nonlinear time periodic solutions for
the even case and proving Landau damping for the uneven case are currently
under investigation.

Last, we discuss some key ideas in the proof. Our analysis relies on a
delicate perturbation argument from a stable homogeneous equilibria. It is
well-known that the original spectra analysis around small BGK waves is
difficult due to unbounded perturbation $\beta_{x}\partial_{v}g_{\pm}$ in the
following eigenfunction equation at a BGK wave $\left(  f_{\pm}^{\beta}\left(
x,v\right)  ,\ \beta\right)  $ satisfying (\ref{defn-f-pn-BGK}) and
(\ref{bgk-uneven}):
\begin{align}
\lambda g_{\pm}+v\partial_{x}g_{\pm}\mp\beta_{x}\partial_{v}g_{\pm}\mp\phi
_{x}v\mu_{\pm,+}^{\prime} &  =0,\ v>0\label{vlasoveigen}\\
\lambda g_{\pm}+v\partial_{x}g_{\pm}\mp\beta_{x}\partial_{v}g_{\pm}\mp\phi
_{x}v\mu_{\pm,-}^{\prime} &  =0,\ v<0\nonumber
\end{align}%
\begin{equation}
-\phi_{xx}=\rho=\int\left(  g_{+}-g_{-}\right)  dv,\label{poissoneigen}%
\end{equation}
where $\lambda$ is the eigenvalue and $\left(  g_{\pm},\phi\right)  \ $is the
eigenfunction. For an unstable eigenvalue $\lambda$ with $\operatorname{Re}%
\lambda>0,$ it is possible to `integrate' the Vlasov equation
(\ref{vlasoveigen}) and study a nice operator on the electric potential $\phi$
to exclude unstable eigenvalues away from $0.$ The situation is much more
subtle when $\lambda$ is near zero, which is exactly the focus of the current
stability analysis.

There are three new ingredients in our resolution to such an open question.
The first is to use action-angle variables to integrate the Vlasov equation
(\ref{vlasoveigen}) and define a charge operator $\rho(\lambda;\varepsilon)$
acting on electric potential $\phi$, where $\varepsilon=\left\vert
\beta\right\vert _{^{\infty}}$. Remarkably, we observe that if $\mu_{\pm,\pm}$
are flat near the origin, then the operator $\rho(\lambda;\varepsilon)$ is
analytic in $\lambda$ near $0\ $and continuous in $\varepsilon.$ The second
ingredient is to prove the number of eigenvalue $\lambda$ near zero is no more
than two for the non-even BGK waves and no more than four for the even
BGK\ waves, thanks to a new abstract lemma on stability of eigenvalues and the
counting of the multiplicity of zero eigenvalue for the homogeneous state. The
last ingredient is to use the Hamiltonian structure of the linearized
Vlasov-Poisson system and the zero eigenmode due to translation. Combining
such a structure with the eigenvalue counting near $0$, we can rule out
unstable eigenvalues for the uneven steady states. For the even states, the
even and odd perturbations can be studied separately. For odd perturbations,
the unstable eigenvalues can be ruled out by the counting as in the uneven
case. For even perturbations, the possible unstable eigenvalues must be real,
from which a sharp stability criterion can be derived.

Since neutrally stable spectra can easily become unstable under perturbations,
it is generally difficult to construct stable steady states in Hamiltonian
systems via a perturbation method. Our successful construction provides a
general approach to find stability criteria to ensure that the zero eigenvalue
can only bifurcate to stable ones for Hamiltonian systems with certain natural
symmetry. The linearized Vlasov-Poisson system near BGK waves is a linear
Hamiltonian system $\mathcal{JL}$ (\ref{Hamiltonian}) with an indefinite
energy functional $\left\langle \mathcal{L}\vec{g},\vec{g}\right\rangle $, for
which there are very few methods to study the stability issues. Our approach
could be useful for other problems with an indefinite energy functional.

\section{Stability of Spectra}

First, we give an abstract lemma about the stability of eigenvalues near $0$.
Let $K(\lambda,\varepsilon)$ be a family of bounded linear operators from
Hilbert space $X$ to $X,$ where $\lambda\in\mathbf{C,}$ $\varepsilon
\in\mathbf{R}$. We assume that:%

\begin{equation}
\text{for }\varepsilon<<1,\ K(\lambda,\varepsilon)\ \text{is analytic
near\ }\lambda=0. \label{assumption A}%
\end{equation}
That is, for $\varepsilon<<1$, the map $\lambda\rightarrow K(\lambda
,\varepsilon)$ is analytic as an operator-valued function on a small disk
$B_{R\left(  \varepsilon\right)  }\left(  0\right)  \subset\mathbf{C}$. This
is equivalent to that $\lambda\rightarrow\left(  K(\lambda,\varepsilon
)u,v\right)  $ is analytic for any $u,v\in X$ (see \cite{hislop-sigal}). We
investigate the set of generalized eigenvalues $\Lambda^{\varepsilon
}=\{\lambda\in\mathbf{C}$\textbf{\ }such that there is $0\neq r\in X$ such
that $\left(  \mathbf{I}+K(\lambda,\varepsilon)\right)  r=0\}.$

\begin{lemma}
\label{lemma-stability-spectra}(Stability of Spectra) Assume
(\ref{assumption A}) and:

1) $\ker\{\mathbf{I}+K(0,0)\}=$span$\{r_{1},r_{2}\}.$

2) $\mathbf{I}+K(0,0):$ $($\textbf{$I$}$-\mathbf{P})X\rightarrow($\textbf{$I$%
}$-\mathbf{P})X\ $is invertible, where $\mathbf{P}$ is the projection to the
span of $\{r_{1},r_{2}\}$ and
\[
\det(\left(  \mathbf{I}+K(\lambda,0)\right)  r_{j},r_{i})\backsim\lambda
^{m},\ (i,j=1,2)
\]
$\ $near $\lambda=0.$

3) $K(\lambda,0)$ is continuous in $\lambda$ and $K(\lambda,0):($\textbf{$I$%
}$-\mathbf{P})X\rightarrow($\textbf{$I$}$-\mathbf{P})X$.

4) For any $\lambda\in\mathbf{C}$ with $\left\vert \lambda\right\vert <<1$,%
\[
\lim_{\varepsilon\rightarrow0}||K(\lambda,\varepsilon)-K(\lambda
,0)||_{L(X,X)}=0.
\]

Then there exists $\alpha>0$ such that for all $\varepsilon<<1,$ $\#\left(
\Lambda^{\varepsilon}\cap\left\{  \left\vert \lambda\right\vert <\alpha
\right\}  \right)  \leq m$.
\end{lemma}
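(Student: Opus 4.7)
The plan is a Lyapunov--Schmidt reduction from $(\mathbf{I}+K(\lambda,\varepsilon))r=0$ to a $2\times 2$ determinantal condition, followed by a Rouch\'e-type zero count. Decompose $X=\mathbf{P}X\oplus(\mathbf{I}-\mathbf{P})X$ and write any candidate null vector as $r=\alpha_{1}r_{1}+\alpha_{2}r_{2}+w$ with $w\in(\mathbf{I}-\mathbf{P})X$. By hypothesis 2), $(\mathbf{I}-\mathbf{P})(\mathbf{I}+K(0,0))|_{(\mathbf{I}-\mathbf{P})X}$ is invertible; combined with hypotheses 3), 4), and (\ref{assumption A}), the restricted operator $(\mathbf{I}-\mathbf{P})(\mathbf{I}+K(\lambda,\varepsilon))|_{(\mathbf{I}-\mathbf{P})X}$ remains invertible with a uniformly bounded inverse $R(\lambda,\varepsilon)$ in a small neighborhood of $(0,0)$. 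Applying $\mathbf{I}-\mathbf{P}$ to the eigenvalue equation then solves uniquely for $w=w(\alpha;\lambda,\varepsilon)$, linear in $\alpha$, analytic in $\lambda$, continuous in $\varepsilon$.

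Substituting $w$ back into the $\mathbf{P}$-projected equation yields a $2\times 2$ system $M(\lambda,\varepsilon)\alpha=0$ for $\alpha=(\alpha_{1},\alpha_{2})$, and existence of a nontrivial $r$ is equivalent to the vanishing of the scalar characteristic function $D(\lambda,\varepsilon):=\det M(\lambda,\varepsilon)$. For each small $\varepsilon$ the map $\lambda\mapsto D(\lambda,\varepsilon)$ is analytic on a disk about $0$. At $\varepsilon=0$, the $w$-correction to each matrix entry $((\mathbf{I}+K(\lambda,0))r_{j},r_{i})$ is controlled by the factor $(\mathbf{I}-\mathbf{P})(\mathbf{I}+K(\lambda,0))r_{j}=O(\lambda)$, which vanishes at $\lambda=0$ precisely because $r_{j}\in\ker(\mathbf{I}+K(0,0))$; one more power of $\lambda$ appears through $P(\mathbf{I}+K(\lambda,0))R(\lambda,0)$ acting on this $O(\lambda)$ quantity. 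Hence the correction is $O(\lambda^{2})$, so $D(\lambda,0)$ agrees to leading order with $\det\bigl(((\mathbf{I}+K(\lambda,0))r_{j},r_{i})\bigr)$, and hypothesis 2) yields $D(\lambda,0)\backsim\lambda^{m}$ at $\lambda=0$.

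Now choose $\alpha>0$ small enough that $D(\,\cdot\,,0)$ has no zero in $0<|\lambda|\le\alpha$; then $\min_{|\lambda|=\alpha}|D(\lambda,0)|\ge c\alpha^{m}>0$ for some $c>0$. The operator-norm convergence in hypothesis 4), combined with a Neumann series expansion of $R(\lambda,\varepsilon)$, upgrades pointwise continuity into uniform convergence $D(\,\cdot\,,\varepsilon)\to D(\,\cdot\,,0)$ on the compact circle $|\lambda|=\alpha$. Hence for $\varepsilon\ll 1$, $|D(\lambda,\varepsilon)-D(\lambda,0)|<|D(\lambda,0)|$ on that circle, and Rouch\'e's theorem gives that $D(\,\cdot\,,\varepsilon)$ and $D(\,\cdot\,,0)$ have the same number of zeros inside, namely $m$ with multiplicity. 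In particular, at most $m$ distinct generalized eigenvalues of $\Lambda^{\varepsilon}$ lie in $\{|\lambda|<\alpha\}$.

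The main obstacle is the algebraic bookkeeping behind the $O(\lambda^{2})$ claim for the Lyapunov--Schmidt correction, which is what allows the simpler bilinear-form matrix $((\mathbf{I}+K(\lambda,0))r_{j},r_{i})$ appearing in hypothesis 2) to control the correct leading-order vanishing of the reduced determinant. A secondary technical point is ensuring that the Neumann series representation of $R(\lambda,\varepsilon)$ converges uniformly on $|\lambda|=\alpha$ as $\varepsilon\to 0$, so that the convergence $D(\,\cdot\,,\varepsilon)\to D(\,\cdot\,,0)$ is uniform rather than only pointwise; this uses operator-norm (as opposed to merely strong) convergence in hypothesis 4).
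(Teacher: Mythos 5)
Your overall strategy is the same as the paper's: Lyapunov--Schmidt reduction to a $2\times 2$ matrix $M(\lambda,\varepsilon)$ and then an analytic zero count for $D=\det M$. But the step you yourself single out as the main obstacle --- the claim that the $w$-correction to the entries of $M(\lambda,0)$ is $O(\lambda^{2})$ so that $D(\lambda,0)$ agrees ``to leading order'' with $\det\bigl(((\mathbf{I}+K(\lambda,0))r_{j},r_{i})\bigr)$ --- is where the argument has a genuine gap. First, the asserted extra power of $\lambda$ from $\mathbf{P}(\mathbf{I}+K(\lambda,0))R(\lambda,0)$ is not justified: hypothesis 3) only gives continuity of $K(\lambda,0)$ in $\lambda$, so neither factor is $O(\lambda)$ without additional smoothness, and no mechanism is given for why that composite operator should be small in $\lambda$. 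Second, even granting an entrywise $O(\lambda^{2})$ correction, this does not imply $D(\lambda,0)\backsim\lambda^{m}$: the correction carries a coefficient that is only bounded, not small, and when the unperturbed entries are themselves of size $\lambda^{2}$ (exactly the situation in the even case of this paper, where the diagonal entries are $1-F(\pm\lambda/i)\backsim\lambda^{2}$ and $m=4$) the correction to the determinant is of the same order $\lambda^{4}$ and could in principle cancel the leading term. So ``agrees to leading order'' does not follow from your estimate, and hypothesis 2) cannot be transferred to $D(\lambda,0)$ this way.

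The fix is available in the hypotheses and is exactly what the paper exploits: hypothesis 3) says $K(\lambda,0)$ maps $(\mathbf{I}-\mathbf{P})X$ into $(\mathbf{I}-\mathbf{P})X$ for all small $\lambda$, and $\mathbf{P}$ is the (orthogonal) projection onto span$\{r_{1},r_{2}\}$; hence for any $u\in(\mathbf{I}-\mathbf{P})X$ one has $((\mathbf{I}+K(\lambda,0))u,r_{i})=0$, so your correction term $\bigl((\mathbf{I}+K(\lambda,0))R(\lambda,0)(\mathbf{I}-\mathbf{P})(\mathbf{I}+K(\lambda,0))r_{j},\,r_{i}\bigr)$ vanishes identically, and $D(\lambda,0)$ equals the hypothesis-2) determinant exactly, with no asymptotic bookkeeping needed. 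Equivalently, the paper arranges the reduction so this is automatic: it inverts $\mathbf{I}+K(\lambda,0)$ on $(\mathbf{I}-\mathbf{P})X$ and treats only the difference $K(\lambda,\varepsilon)-K(\lambda,0)$ perturbatively (its $Z^{\perp}(\lambda,\varepsilon)$ and $B_{ij}(\lambda,\varepsilon)$ vanish at $\varepsilon=0$), so $\det A(\lambda,0)=\det((\mathbf{I}+K(\lambda,0))r_{j},r_{i})$ by construction. Your closing Rouch\'e step is fine in spirit (the paper is equally brief there), though note that hypothesis 4) is stated pointwise in $\lambda$, so uniform convergence of $D(\cdot,\varepsilon)$ on the circle $|\lambda|=\alpha$ needs the $\lambda$-uniform bound that, in the application, comes from Proposition \ref{prop-operator-difference}; also, since $K(\lambda,0)$ is only assumed continuous in $\lambda$, you can only conclude ``at most $m$'' zeros (which is all the lemma claims), not ``exactly $m$ with multiplicity.''
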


\begin{proof}
The proof uses the Liapunov-Schmidt reduction on $\ker\{\mathbf{I}+K(0,0)\}$
and its complement space. By assumptions 2) and 3), there exists $\alpha>0$
such that for any $\lambda\neq0$ with $\left\vert \lambda\right\vert
<\alpha,\ \left(  \mathbf{I}+K(\lambda,0)\right)  |_{(\mathbf{I}-\mathbf{P}%
)X}$ is invertible. We assume $\left\vert \lambda\right\vert <\alpha$ below.

Let $\lambda\in\Lambda^{\varepsilon},$ so that there is $r_{\varepsilon}%
=r\neq0$ such that
\begin{equation}
\left(  \mathbf{I}+K(\lambda,\varepsilon)\right)  r=\{\mathbf{I}%
+K(\lambda,0)+[K(\lambda,\varepsilon)-K(\lambda,0)]\}r=0.
\label{eigenfunction-equation}%
\end{equation}
Let $r=r^{\perp}+r^{||}$, where $r^{||}=a_{1}r_{1}+a_{2}r_{2}$ is the
projection of $r$ to $\left\{  r_{1},r_{2}\right\}  $. Then projecting
(\ref{eigenfunction-equation}) to $($\textbf{$I$}$-\mathbf{P})X$, we get
\[
\left(  \mathbf{I}+K(\lambda,0)\right)  r^{\perp}+\left(  \mathbf{I}%
-\mathbf{P}\right)  [K(\lambda,\varepsilon)-K(\lambda,0)]\}\left(  r^{\perp
}+r^{||}\right)  =0.
\]
Solving $r^{\perp}$ in terms of $r^{||}$, we get
\begin{align*}
r^{\perp}  &  =-[\mathbf{I}+\left(  \mathbf{I}+K(\lambda,0)\right)
^{-1}\left(  \mathbf{I}-\mathbf{P}\right)  \left(  K(\lambda,\varepsilon
)-K(\lambda,0)\right)  ]^{-1}\\
&  \left(  \mathbf{I}+K(\lambda,0)\right)  ^{-1}\left(  \mathbf{I}%
-\mathbf{P}\right)  [K(\lambda,\varepsilon)-K(\lambda,0)]r^{||}\\
&  \equiv Z^{\perp}(\lambda,\varepsilon)\{a_{1}r_{1}+a_{2}r_{2}\},
\end{align*}
where $\mathbf{P}$ is the projection to $\ker\left(  \mathbf{I}+K(0,0)\right)
=\{r_{1},r_{2}\}$ and
\begin{align}
Z^{\perp}(\lambda,\varepsilon)  &  \equiv-[\mathbf{I}+\left(  \mathbf{I}%
+K(\lambda,0)\right)  ^{-1}\left(  \mathbf{I}-\mathbf{P}\right)  \left(
K(\lambda,\varepsilon)-K(\lambda,0)\right)  ]^{-1}\label{definition-Z-per}\\
&  \left(  \mathbf{I}+K(\lambda,0)\right)  ^{-1}\left(  \mathbf{I}%
-\mathbf{P}\right)  [K(\lambda,\varepsilon)-K(\lambda,0)].\nonumber
\end{align}
Plugging above formula to the equation (\ref{eigenfunction-equation}), we
have
\begin{align}
0  &  =[K(\lambda,\varepsilon)-K(\lambda,0)]\left(  Z^{\perp}(\lambda
,\varepsilon)+\mathbf{I}\right)  \left(  a_{1}r_{1}+a_{2}r_{2}\right)
\label{equation-LS-reduction}\\
&  \ \ +\left(  \mathbf{I}+K(\lambda,0)\right)  \left(  a_{1}r_{1}+a_{2}%
r_{2}\right)  +\left(  \mathbf{I}+K(\lambda,0)\right)  \left(  \left(
\mathbf{I}-\mathbf{P}\right)  r\right)  .\nonumber
\end{align}
Taking inner product of above equation with $r_{1}$ and $r_{2}$ respectively,
we get%
\begin{equation}
\sum_{j=1}^{2}(\left(  \mathbf{I}+K(\lambda,0)\right)  r_{j},r_{i})a_{j}%
+\sum_{j}^{2}B_{ij}(\lambda,\varepsilon)a_{j}=0,\ i=1,2,
\label{linear-system-eigenfunction}%
\end{equation}
where
\[
B_{ij}(\lambda,\varepsilon)=\left(  [K(\lambda,\varepsilon)-K(\lambda
,0)][Z^{\perp}(\lambda,\varepsilon)+\mathbf{I}]r_{j},r_{i}\right)  .
\]
Here, in the above we use the fact that
\[
((\mathbf{I}+K(\lambda,0))(\mathbf{I}-\mathbf{P})r,r_{i})=0,\ i=1,2.
\]
Define the $2$ by $2$ matrix $A(\lambda,\varepsilon)=\left(  A_{ij}%
(\lambda,\varepsilon)\right)  $ by
\[
A_{ij}=(\left(  \mathbf{I}+K(\lambda,0)\right)  r_{j},r_{i})+B_{ij}%
(\lambda,\varepsilon),~i,j=1,2,
\]
then the eigenvalue problem (\ref{eigenfunction-equation}) is equivalent to
$\det A(\lambda,\varepsilon)=0.$

By assumption (\ref{assumption A}), $K(\lambda,\varepsilon)$ is analytic near
$\lambda=0,$ it follows that $\det A(\lambda,\varepsilon)$ is analytic in
$\lambda$ near $0$ for $\varepsilon<<1.$ By 2),
\[
\det A(\lambda,0)=\det(\left(  \mathbf{I}+K(\lambda,0)\right)  r_{j}%
,r_{i})\backsim\lambda^{m}.
\]
Moreover, by 4) we have $\lim_{\varepsilon\rightarrow0}$ $|\det A(\lambda
,\varepsilon)-\det A(\lambda,0)|=0.$ It follows from the analytical function
theory, there exists $\alpha>0$ such that there are at most $m$ distinct
$\lambda$ with $\left\vert \lambda\right\vert <\alpha\ $satisfying $\det
A(\lambda,\varepsilon)=0$. Thus $\#\left(  \Lambda^{\varepsilon}\cap\left\{
\left\vert \lambda\right\vert <\alpha\right\}  \right)  \leq m$.
\end{proof}

By the same proof, we have a similar result when $\ker\left(  \mathbf{I}%
+K(0,0)\right)  $ is one-dimensional.

\begin{corollary}
\label{cor-stability-spectra}Assume (\ref{assumption A}) and:

1) $\ker\{\mathbf{I}+K(0,0)\}=$span$\{r\}.$

2) $\{\mathbf{I}+K(0,0)\}$ is invertible from $\{\mathbf{I}-\mathbf{P}%
\}X\rightarrow\{\mathbf{I}-\mathbf{P}\}X,$ where $\mathbf{P}$ is the
projection to span$\{r\},$ and $(\{I+K(\lambda,0)\}r,r)\backsim\lambda^{m}.$

3) $\{\mathbf{I}+K(\lambda,0)\}$ is continuous in $\lambda$ and $\{\mathbf{I}%
+K(\lambda,0)\}$ maps from $\{\mathbf{I}-\mathbf{P}\}X$ to $\{\mathbf{I}%
-\mathbf{P}\}X.$

4) \ For any $\lambda\in\mathbf{C}$, $\lim_{\varepsilon\rightarrow
0}||K(\lambda,\varepsilon)-K(\lambda,0)||\rightarrow0.$\ 

Then there exists a $\alpha>0$ such that if $|\varepsilon|<<1,$ $\#\{\Lambda
^{\varepsilon}\cap\{|\lambda|<\alpha\}\leq m.$
\end{corollary}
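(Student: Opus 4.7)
The plan is to repeat the Liapunov--Schmidt reduction from the proof of Lemma \ref{lemma-stability-spectra}, but adapted to the one-dimensional kernel $\ker\{\mathbf{I}+K(0,0)\}=\mathrm{span}\{r\}$. Using assumptions 2) and 3), I would first fix $\alpha>0$ small enough that $\mathbf{I}+K(\lambda,0)$ restricted to $(\mathbf{I}-\mathbf{P})X$ remains invertible for every $\lambda$ in the punctured disk $0<|\lambda|<\alpha$; by continuity in $\lambda$ this follows from its invertibility at $\lambda=0$. I would then restrict attention to $|\lambda|<\alpha$ throughout.

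Next, for any $\lambda\in\Lambda^{\varepsilon}$ with $|\lambda|<\alpha$ and corresponding eigenvector $r_{\varepsilon}\neq 0$, I would decompose $r_{\varepsilon}=a\,r+r^{\perp}$ with $r^{\perp}\in(\mathbf{I}-\mathbf{P})X$. Projecting $\{\mathbf{I}+K(\lambda,\varepsilon)\}r_{\varepsilon}=0$ onto $(\mathbf{I}-\mathbf{P})X$ and solving for $r^{\perp}$ in terms of $a$ exactly as in (\ref{definition-Z-per}) yields $r^{\perp}=Z^{\perp}(\lambda,\varepsilon)(a\,r)$ for an operator $Z^{\perp}$ built from $(\mathbf{I}+K(\lambda,0))^{-1}$ on $(\mathbf{I}-\mathbf{P})X$ and the perturbation $K(\lambda,\varepsilon)-K(\lambda,0)$. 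Assumption 4), combined with a Neumann series for the inversion in (\ref{definition-Z-per}), ensures that $Z^{\perp}(\lambda,\varepsilon)$ is well defined and tends to $0$ uniformly in $\lambda$ on the disk as $\varepsilon\to 0$.

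Substituting $r^{\perp}=Z^{\perp}(\lambda,\varepsilon)(a\,r)$ back into the full eigenfunction equation and pairing with $r$ (observing that $((\mathbf{I}+K(\lambda,0))(\mathbf{I}-\mathbf{P})r_{\varepsilon},r)=0$) yields the scalar equation $A(\lambda,\varepsilon)\,a=0$, where
\[
A(\lambda,\varepsilon)=(\{\mathbf{I}+K(\lambda,0)\}r,r)+([K(\lambda,\varepsilon)-K(\lambda,0)](\mathbf{I}+Z^{\perp}(\lambda,\varepsilon))r,\,r).
\]
Since the vanishing of $a$ together with invertibility of $\mathbf{I}+K(\lambda,\varepsilon)$ on $(\mathbf{I}-\mathbf{P})X$ (valid for small $\varepsilon$ and $0<|\lambda|<\alpha$) would force $r_{\varepsilon}=0$, the eigenvalues in $\Lambda^{\varepsilon}\cap\{|\lambda|<\alpha\}$ correspond exactly to the zeros of $A(\cdot,\varepsilon)$. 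By (\ref{assumption A}), $A(\lambda,\varepsilon)$ is analytic in $\lambda$ near $0$; by 2), $A(\lambda,0)\sim\lambda^{m}$; and by 4) together with the uniform bound on $Z^{\perp}$, $A(\lambda,\varepsilon)\to A(\lambda,0)$ uniformly on a slightly shrunken disk. A standard application of Rouché's theorem (or Hurwitz) then bounds the number of zeros of $A(\cdot,\varepsilon)$ by $m$ for $\varepsilon$ sufficiently small.

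I do not anticipate a genuine obstacle here: the argument is structurally identical to that of Lemma \ref{lemma-stability-spectra}, with the $2\times 2$ determinant $\det A(\lambda,\varepsilon)$ replaced by the scalar function $A(\lambda,\varepsilon)$. The only point requiring minor care is the uniform control of $Z^{\perp}(\lambda,\varepsilon)$ on the punctured disk $0<|\lambda|<\alpha$, which follows from assumption 4) via the Neumann series once $\alpha$ has been chosen small enough relative to the operator norm of $(\mathbf{I}+K(\lambda,0))^{-1}(\mathbf{I}-\mathbf{P})$.
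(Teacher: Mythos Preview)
Your proposal is correct and follows exactly the route the paper indicates: the paper's proof of this corollary is simply ``By the same proof'' as Lemma~\ref{lemma-stability-spectra}, and you have carried out precisely that, replacing the $2\times 2$ determinant $\det A(\lambda,\varepsilon)$ by the scalar $A(\lambda,\varepsilon)$ arising from the one-dimensional kernel.
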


\section{Vlasov Spectra}

In this section, we use the Hamiltonian structure of the linearized
Vlasov-Poisson operator to show that the Vlasov spectra is symmetric to both
real and imaginary axes. We only consider the two-species case and the same is
true for the fixed ion case. Define the linearized Vlasov-Poisson operator
\begin{equation}
\mathcal{A}\left(
\begin{array}
[c]{c}%
g_{+}\\
g_{-}%
\end{array}
\right)  =\left(
\begin{array}
[c]{c}%
-\left(  v\partial_{x}-\beta_{x}\partial_{v}\right)  g_{+}+\partial_{x}%
\phi\partial_{v}\mu_{+}\\
-\left(  v\partial_{x}+\beta_{x}\partial_{v}\right)  g_{-}-\partial_{x}%
\phi\partial_{v}\mu_{-}%
\end{array}
\right)  , \label{defn-VP-operator}%
\end{equation}
with$\ $%
\[
\phi=\left(  \partial_{x}^{2}\right)  ^{-1}\left(  \int\{g_{+}-g_{-}%
\ \}dv\right)  ,
\]
\ \ where $\mu_{\pm}\equiv\mu_{\pm}(\frac{1}{2}|v|^{2}\pm\beta)$. Define
$X_{\pm}$ to be the $\left\vert \mu_{\pm}^{\prime}\right\vert \ $weighted
$L^{2}$ space and $X=X_{+}\times X_{-}$. We consider the spectra of the
operator $\mathcal{A}$ in $X$.

\begin{lemma}
\label{lemma-vlasov-spectra}(Structure of Vlasov Spectra) The essential
spectrum of the linearized Vlasov-Poisson system (\ref{vlasoveigen}) is the
imaginary axis. Let $\lambda$ be an eigenvalue, then both $\bar{\lambda
},-\lambda$ must also be eigenvalues.
\end{lemma}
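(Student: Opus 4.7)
My plan is to treat the two assertions separately.

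For the essential spectrum, I would split $\mathcal{A}=\mathcal{T}+\mathcal{K}$, where $\mathcal{T}\vec{g}=(-(v\partial_{x}-\beta_{x}\partial_{v})g_{+},\,-(v\partial_{x}+\beta_{x}\partial_{v})g_{-})$ is the transport along the single-particle characteristics of the steady BGK flow, and $\mathcal{K}$ is the self-consistent field term $(\partial_{x}\phi\,\partial_{v}\mu_{+},-\partial_{x}\phi\,\partial_{v}\mu_{-})$ which factors through $\phi=(\partial_{x}^{2})^{-1}\int(g_{+}-g_{-})\,dv$. The Hamiltonian vector fields $(v,\mp\beta_{x})$ are divergence-free and tangent to the level sets of $e_{\pm}=\tfrac{1}{2}v^{2}\pm\beta$, so they preserve the weights $|\mu'_{\pm}(e_{\pm})|$ defining $X_{\pm}$; hence $\mathcal{T}$ is skew-adjoint on $X$ and $\sigma(\mathcal{T})\subset i\mathbf{R}$. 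Passing to action-angle coordinates on each connected component of the phase portrait reduces $\mathcal{T}$ to $\omega(I)\partial_{\theta}$, whose spectrum fills the imaginary axis. For $\mathcal{K}$, the inverse Laplacian gains two $x$-derivatives while the multiplier $v\mu'_{\pm}(e_{\pm})$ is bounded and decays in $v$ by (\ref{condition-decay}); a standard Rellich-plus-decay argument then shows $\mathcal{K}$ is compact on $X$. Weyl's theorem on the stability of the essential spectrum under relatively compact perturbations gives $\sigma_{\mathrm{ess}}(\mathcal{A})=i\mathbf{R}$.

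For the eigenvalue symmetries, the coefficients of $\mathcal{A}$ are real, so complex conjugation $(g_{+},g_{-})\mapsto(\bar{g}_{+},\bar{g}_{-})$ sends an eigenfunction with eigenvalue $\lambda$ to one with eigenvalue $\bar{\lambda}$. To obtain $-\lambda$, I would invoke the Hamiltonian structure $\mathcal{A}=\mathcal{J}\mathcal{L}$ already advertised via (\ref{Hamiltonian}), with $\mathcal{J}$ skew-adjoint and $\mathcal{L}$ self-adjoint (the second variation of the conserved energy-Casimir at the BGK equilibrium). Taking adjoints yields $\mathcal{A}^{*}=-\mathcal{L}\mathcal{J}$, and hence the intertwining identity
\[
\mathcal{L}\mathcal{A}=\mathcal{L}\mathcal{J}\mathcal{L}=-\mathcal{A}^{*}\mathcal{L}.
\]
Applied to an eigenvector $\vec{g}$ with $\mathcal{A}\vec{g}=\lambda\vec{g}$ for which $\mathcal{L}\vec{g}\neq0$, this gives $\mathcal{A}^{*}(\mathcal{L}\vec{g})=-\lambda(\mathcal{L}\vec{g})$, so $-\lambda\in\sigma(\mathcal{A}^{*})=\overline{\sigma(\mathcal{A})}$; combined with the conjugation symmetry, $-\lambda\in\sigma(\mathcal{A})$.

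The main obstacle will be eigenvectors $\vec{g}$ that happen to lie in $\ker\mathcal{L}$, which is nontrivial in our setting because of translation invariance and Casimir conservation (and is exactly what drives the zero-mode counting in later sections). To handle this I would not argue at the level of eigenvectors but rather at the level of the Riesz spectral projector $P_{\lambda}$ on the generalized eigenspace at $\lambda$: the intertwining identity pushes forward to $\mathcal{L}P_{\lambda}=P_{-\bar{\lambda}}^{*}\mathcal{L}$, and since $\mathcal{L}$ is injective on the range of $P_{\lambda}$ modulo kernels forced by the Jordan structure of $\mathcal{J}\mathcal{L}$, picking a generalized eigenvector one step up the Jordan chain produces a nonzero $\mathcal{L}\vec{g}$ on which the above argument runs. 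A subsidiary technical point is checking compactness of $\mathcal{K}$ in the weighted space $X$, which uses the decay and flatness hypotheses on $\mu_{\pm,\pm}$ to guarantee that $v\mu'_{\pm}(e_{\pm})$ yields a compact multiplier from $L^{2}_{v}$ into $X_{\pm}$.
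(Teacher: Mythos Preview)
Your approach is essentially the paper's: split off the skew-adjoint transport part, invoke Weyl's theorem for the essential spectrum via relative compactness of the field term, use real coefficients for the $\bar\lambda$ symmetry, and use the Hamiltonian factorization $\mathcal{A}=\mathcal{J}\mathcal{L}$ with $\mathcal{A}^{*}=-\mathcal{L}\mathcal{J}$ for the $-\lambda$ symmetry.

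The one place you diverge is your ``main obstacle,'' and there you are working harder than necessary. If $\mathcal{A}\vec{g}=\lambda\vec{g}$ with $\vec{g}\neq0$ and $\mathcal{L}\vec{g}=0$, then $\lambda\vec{g}=\mathcal{J}\mathcal{L}\vec{g}=0$, so $\lambda=0$; in that case $-\lambda=\lambda$ and there is nothing to prove. The paper makes exactly this observation (after first noting that for $\operatorname{Re}\lambda=0$ one already has $-\lambda=\bar\lambda$ from conjugation, so only $\operatorname{Re}\lambda\neq0$, hence $\lambda\neq0$, needs the Hamiltonian argument). Thus $\mathcal{L}\vec{g}\neq0$ is automatic whenever it matters, and your Riesz-projector/Jordan-chain machinery is not needed. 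Your sketch of that machinery is also a bit loose---the claim that ``$\mathcal{L}$ is injective on the range of $P_{\lambda}$ modulo kernels forced by the Jordan structure'' would itself require the same kind of argument you are trying to avoid---so it is better to simply drop that paragraph and insert the one-line implication $\mathcal{L}\vec{g}=0\Rightarrow\lambda=0$.
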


\begin{proof}
We first note that the transport operator $-\left(  v\partial_{x}\mp\beta
_{x}\partial_{v}\right)  $ is an anti-symmetric closed operator with imaginary
axis being its essential spectrum, while $\pm\partial_{x}\phi\partial_{v}%
\mu_{\pm}$ with $\phi=\left(  \partial_{x}^{2}\right)  ^{-1}\left(
\int\{g_{+}-g_{-}\}dv\right)  $ is a relative compact perturbation. Thus by
Weyl's Theorem (Th. 5.35 in \cite{Kato}), the essential spectrum of
$\mathcal{A}$ remains the same, with possible additional discrete eigenvalues.

Define the operators
\[
\mathcal{L}_{\pm}g_{\pm}=\frac{g_{\pm}}{\mu_{\pm}^{\prime}},\ \mathcal{B}%
f=\left(  \partial_{x}^{2}\right)  ^{-1}\left(  \int fdv\right)
,\ \mathcal{J}_{\pm}=-\mu_{\pm}^{\prime}\left(  v\partial_{x}\mp\beta
_{x}\partial_{v}\right)  .
\]
Then formally the operator $\mathcal{A}=\mathcal{JL}\ $is of Hamiltonian form,
where the operators
\begin{equation}
\mathcal{J}=\left(
\begin{array}
[c]{cc}%
\mathcal{J}_{+} & 0\\
0 & \mathcal{J}_{-}%
\end{array}
\right)  ,\ \mathcal{L=}\left(
\begin{array}
[c]{cc}%
\mathcal{L}_{+}-\mathcal{B} & \mathcal{B}\\
\mathcal{B} & \mathcal{L}_{-}-\mathcal{B}%
\end{array}
\right)  \label{Hamiltonian}%
\end{equation}
are anti-symmetric and symmetric respectively on $X$. We choose $\lambda$ to
be an eigenvalue of $\mathcal{A}.$ Since $\mathcal{A}$ maps real functions to
real functions, if $\lambda$ is an eigenvalue of $\mathcal{A}$, then so is
$\bar{\lambda}$. To show that $-\lambda$ is also an eigenvalue of
$\mathcal{A}$, it suffices to assume that $\operatorname{Re}\lambda\neq0$
since otherwise $-\lambda=\bar{\lambda}$ is already an eigenvalue. Assume
$\operatorname{Re}\lambda>0$ and $\vec{g}=\left(  g_{+},g_{-}\right)  $ is an
eigenfunction. Then from $\mathcal{A}\vec{g}=\mathcal{J}\mathcal{L}\vec
{g}=\lambda\vec{g}$, clearly $\mathcal{L}\vec{g}\neq0$ since otherwise
$\lambda\vec{g}=\mathcal{J}\mathcal{L}\vec{g}=0$ so that $\lambda=0,$ a
contradiction. Since $\mathcal{J}^{\ast}=-\mathcal{J}$ and $\mathcal{L}^{\ast
}=\mathcal{L},$ $\mathcal{A}^{\ast}=-\mathcal{LJ}$. Define $\vec
{h}=\mathcal{L}\vec{g}\neq0$, then
\[
\mathcal{A}^{\ast}\vec{h}=-\left(  \mathcal{LJ}\right)  \mathcal{L}\vec
{g}=-\mathcal{L}\left(  \mathcal{JL}\right)  \vec{g}=-\lambda\mathcal{L}%
\vec{g}=-\lambda\vec{h},
\]
so $-\lambda$ is an eigenvalue of $\mathcal{A}^{\ast}.$ Since $\sigma
(\mathcal{A})=\overline{\sigma(\mathcal{A}^{\ast})},$ $-\bar{\lambda}$ and
therefore $-\lambda\ $is an eigenvalue of $\mathcal{A}.$
\end{proof}

\section{Action-Angle Reformulation}

To use Lemma \ref{lemma-stability-spectra} on the stability of spectra, we
reformulate the eigenvalue problem (\ref{vlasoveigen}) to a Fredholm operator
for the potential function $\phi$. To achieve this, we solve $f$ in terms of
$\phi$ by using the action-angle variables of the steady trajectory. Below, we
treat the two-species case. The fixed ion case is similar.

\textbf{Action-Angle Formulation: }The construction of the action-angle
variables follows from Section 50.B in \cite{arnold-mechanics}.

\textit{Inside separatrix}: When the initial state is in the trapped region,
that is,
\begin{equation}
\left(  x,v\right)  \in\Omega_{0}=\left\{  e_{\pm}<\max\beta=-\min
\beta\right\}  ,\label{defn-omego-0}%
\end{equation}
the particle is trapped in the interval $\left[  -\alpha_{\pm}(e_{\pm}%
),\alpha_{\pm}(e_{\pm})\right]  $, with the period
\[
T_{\pm}(e_{\pm})=2\int_{-\alpha_{\pm}(e_{\pm})}^{\alpha_{\pm}(e_{\pm})}%
\frac{dx^{\prime}}{\sqrt{2(e_{\pm}\mp\beta(x^{\prime}))}},
\]
where $e_{\pm}=\pm\beta(\alpha_{\pm}(e_{\pm}))$. Define the action variable
\[
I_{\pm}(e_{\pm})=\frac{1}{2\pi}\int_{-\max\beta}^{e_{\pm}}T_{\pm}(e_{\pm
}^{\prime})de_{\pm}^{\prime},
\]
and the angle variable
\[
\theta_{\pm}=\frac{2\pi}{T_{\pm}(e_{\pm})}\int_{-\alpha_{\pm}}^{x}%
\frac{dx^{\prime}}{\sqrt{2(e_{\pm}\mp\beta(x^{\prime}))}},\ \ v>0,
\]
and
\[
\theta_{\pm}=2\pi-\frac{2\pi}{T_{\pm}(e_{\pm})}\int_{-\alpha_{\pm}}^{x}%
\frac{dx^{\prime}}{\sqrt{2(e_{\pm}\mp\beta(x^{\prime}))}},\ \ v<0.
\]

\textit{On the separatrix}:

When $\left(  x,v\right)  \in\left\{  e_{\pm}=-\min\beta\right\}  $, the
particle takes infinite time to approach the saddle point $\left(
\frac{P_{\beta}}{2},0\right)  $ for electrons and $\left(  0,0\right)  $ for ions.

\textit{Outside separatrix}:

When the initial state is in the upper untrapped region, that is,
\begin{equation}
\left(  x,v\right)  \in\Omega_{+}=\left\{  e_{\pm}>-\min\beta,v>0\right\}
,\label{defn-omega-+}%
\end{equation}
or in the lower untrapped region, that is,
\begin{equation}
\left(  x,v\right)  \in\Omega_{-}=\left\{  e_{\pm}>-\min\beta,v<0\right\}
,\label{defn-omega-}%
\end{equation}
the particle goes through the whole interval $\left[  0,P_{\beta}\right]  $
without changing its direction. Then the period of the particle motion is
\[
T_{\pm}(e_{\pm})=\int_{0}^{P_{\beta}}\frac{dx^{\prime}}{\sqrt{2(e_{\pm}%
\mp\beta(x^{\prime}))}}.
\]
We define the action and angle variables by
\begin{align}
\text{ }I_{\pm}(e_{\pm}) &  =\frac{1}{2\pi}\int_{-\min\beta}^{e_{\pm}}T_{\pm
}(e_{\pm}^{\prime})de_{\pm}^{\prime},\label{actionangle}\\
\theta_{\pm} &  =\frac{2\pi}{T_{\pm}(e_{\pm})}\int_{0}^{x}\frac{dx^{\prime}%
}{\sqrt{2(e_{\pm}\mp\beta(x^{\prime}))}}\text{ },\nonumber
\end{align}
and denote
\[
\omega_{\pm}(I_{\pm})=\frac{2\pi}{T_{\pm}(e_{\pm}\left(  I_{\pm}\right)  )}%
\]
to be the frequency. We list some basic properties of action-angle variables
(see \cite{arnold-mechanics}). First, for both trapped region $\Omega_{0}$ and
untrapped regions $\Omega_{\pm}$, the action-angle transform $(x,v)\rightarrow
(I_{\pm},\theta_{\pm})$ is a smooth diffeomorphism with Jacobian $1$. Second,
in the coordinates $\left(  I_{\pm},\theta_{\pm}\right)  $, the particle
motion equation $\dot{X}_{\pm}=V_{\pm},$ $\dot{V}_{\pm}=\mp\beta_{x}\left(
X_{\pm}\right)  $ becomes $\dot{I}_{\pm}=0,\ \dot{\theta}_{\pm}=\omega_{\pm
}(I_{\pm})$ for trapped particles; for free particle, it becomes $\dot{I}%
_{\pm}=0,\ \dot{\theta}_{\pm}=\omega_{\pm}(I_{\pm})$, when $V_{\pm}\left(
0\right)  >0$ and $\dot{I}_{\pm}=0,\ \dot{\theta}_{\pm}=-\omega_{\pm}(I_{\pm
})$, when $V_{\pm}\left(  0\right)  <0$. So the particle trajectory $\left(
X_{\pm}\left(  t;x,v\right)  ,V_{\pm}\left(  t;x,v\right)  \right)  $ becomes:
$\left(  I_{\pm},\ \theta_{\pm}+t\omega_{\pm}(I_{\pm})\right)  $ inside the
separatrix; $\left(  I_{\pm},\ \theta_{\pm}+t\omega_{\pm}(I_{\pm})\right)  $
for $v>0$ and $\left(  I_{\pm},\ \theta_{\pm}-t\omega_{\pm}(I_{\pm})\right)  $
for $v<0$ , outside the separatrix. Here, $\left(  I_{\pm},\ \theta_{\pm
}\right)  $ are the action-angle variables for the initial position $\left(
X_{\pm}\left(  0\right)  ,V_{\pm}\left(  0\right)  \right)  =\left(
x,v\right)  $. Correspondingly, we have the following relations of the
transport operators in $\left(  x,v\right)  $ and $\left(  I_{\pm}%
,\ \theta_{\pm}\right)  $:
\begin{equation}
v\partial_{x}\mp\beta_{x}\partial_{v}=\omega_{\pm}(I_{\pm})\partial
_{\theta_{\pm}}\label{transport-inside}%
\end{equation}
inside the separatrix, and
\begin{equation}
v\partial_{x}\mp\beta_{x}\partial_{v}=\left\{
\begin{array}
[c]{cc}%
\omega_{\pm}(I_{\pm})\partial_{\theta_{\pm}} & \text{for }v>0\\
-\omega_{\pm}(I_{\pm})\partial_{\theta_{\pm}} & \text{for }v<0
\end{array}
\right.  \text{ }\label{transport-outside}%
\end{equation}
outside the separatrix. We summarize main properties of\ the action-angle
transform in the following lemma.

\begin{lemma}
\label{lemma-action-angle}In the angle-action variables $(I_{\pm},\theta_{\pm
})$, we have

(i) $0\leq\omega_{\pm}<\infty,$

(ii) $\lim_{e_{\pm}\rightarrow-\min\beta}T_{\pm}(e_{\pm})=\infty,$
$\lim_{e_{\pm\rightarrow-\min\beta}}\omega_{\pm}(e_{\pm})=0,$

(iii) Inside the trapped region $\Omega_{0}$:%
\[
v\partial_{x}\mp\beta_{x}\partial_{v}=\omega_{\pm}(I_{\pm})\partial
_{\theta_{\pm}}.
\]

(iv) Outside the trapped region:
\begin{align*}
v\partial_{x}\mp\beta_{x}\partial_{v}  &  =\omega_{\pm}(I_{\pm})\partial
_{\theta_{\pm}},\text{ \ \ for }v>0,\\
\text{ }v\partial_{x}\mp\beta_{x}\partial_{v}  &  =-\omega_{\pm}(I_{\pm
})\partial_{\theta_{\pm}},\text{ \ for }v<0.
\end{align*}

\end{lemma}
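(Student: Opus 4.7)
The lemma follows directly from the explicit formulas for $T_\pm$, $I_\pm$, $\theta_\pm$ recorded above, together with the standard action-angle computation from \cite{arnold-mechanics}. I would handle the four items in order, in each case reducing to a short calculation with the period integral $T_\pm(e_\pm)=\int dx/\sqrt{2(e_\pm\mp\beta)}$.

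For (i), I would observe that $\omega_\pm=2\pi/T_\pm$ and that on each of $\Omega_0$, $\Omega_+$, $\Omega_-$ the integrand $1/\sqrt{2(e_\pm\mp\beta)}$ is strictly positive on the relevant $x$-interval, with only integrable square-root singularities at the turning points (where $\beta_x\neq 0$). Hence $0<T_\pm<\infty$ pointwise, so $0<\omega_\pm<\infty$.

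For (ii), the key point is that as $e_\pm\to-\min\beta$ the turning point(s) in the trapped case (and the minimum of $e_\pm\mp\beta$ in the untrapped case) approach the saddle of the one-particle Hamiltonian $\tfrac12v^2\pm\beta$, namely $x_*=0$ for ions and $x_*=\pm P_\beta/2$ for electrons. By Lemma 1, $\beta(x)=\varepsilon\cos(2\pi x/P_\beta)+O(\varepsilon^2)$, so $\mp\beta''(x_*)>0$ and a Taylor expansion yields
\[
e_\pm\mp\beta(x)=(e_\pm+\min\beta)+\tfrac12|\beta''(x_*)|(x-x_*)^2+O((x-x_*)^3).
\]
Substituting this into the period integral produces the familiar logarithmic divergence of the separatrix period, so $T_\pm\to\infty$ and $\omega_\pm\to 0$.

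For (iii) and (iv), I would invoke the fact that $(x,v)\mapsto(I_\pm,\theta_\pm)$ is a smooth area-preserving diffeomorphism on each connected component (this is part of the action-angle construction), and that by construction $\tfrac12v^2\pm\beta$ depends only on $I_\pm$ in the new coordinates. Hamilton's equations in the new variables then read $\dot I_\pm=0$, $\dot\theta_\pm=\omega_\pm(I_\pm)$ on $\Omega_0$ and on $\Omega_+$, and $\dot\theta_\pm=-\omega_\pm(I_\pm)$ on $\Omega_-$; this I would verify by a direct computation using $\partial_x\theta_\pm=\omega_\pm/\sqrt{2(e_\pm\mp\beta)}$ from the defining formula for $\theta_\pm$, together with $\dot x=v$, $\dot v=\mp\beta_x$. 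Because the transport operator $v\partial_x\mp\beta_x\partial_v$ equals $\tfrac{d}{dt}\big|_{t=0}$ along the steady trajectory, the chain rule immediately yields the stated identities.

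The only subtlety I anticipate is sign bookkeeping in the untrapped region, where $\theta_\pm$ is defined with the same orientation for $v>0$ and $v<0$ but the $x$-motion reverses sign. I expect this to be the main obstacle, and I would resolve it by differentiating the explicit definition of $\theta_\pm$ in $(x,v)$ separately in the two half-planes and matching against $\pm\omega_\pm\partial_{\theta_\pm}$. The remaining work is pure bookkeeping once this sign is pinned down.
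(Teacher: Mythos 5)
Your proposal is correct and follows essentially the same route as the paper, which states this lemma as a summary of the standard action-angle facts (citing Arnold) together with the trajectory formulas $\dot I_{\pm}=0$, $\dot\theta_{\pm}=\pm\omega_{\pm}(I_{\pm})$ already recorded before the lemma; your verification of the period integral's positivity, its logarithmic divergence at the separatrix, and the chain-rule identity $v\partial_{x}\mp\beta_{x}\partial_{v}=\dot\theta_{\pm}\partial_{\theta_{\pm}}$ with the correct sign for $v<0$ outside the separatrix is exactly the computation being invoked. No gaps.
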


Recall that in this paper, the profiles $\mu_{\pm}\ $of BGK waves are assumed
to be flat near zero in an interval $\left[  -\sigma_{\pm},\sigma_{\pm
}\right]  $. Let $\sigma=\min\left\{  \sigma_{+},\sigma_{-}\right\}  $. Below,
the notation $f\lesssim g$ $\left(  f\gtrsim g\right)  \ $stands for $f\leq
Cg$ $\left(  f\geq Cg\right)  $, for a generic constant $C>0$ independent of
$\varepsilon=\left\vert \beta\right\vert _{\infty}$.

\begin{lemma}
\label{epestimate}Assume $\varepsilon=\left\vert \beta\right\vert _{\infty
}<<\sigma$. For $|e_{\pm}|\geq\frac{\sigma}{2}$ (outside of separatrix)$,$ we
have $\left\vert \frac{\partial x}{\partial\theta}\right\vert \lesssim1,$
\[
\left\vert \omega_{\pm}^{\prime}(I_{\pm})-\left(  \frac{2\pi}{P_{\beta}%
}\right)  ^{2}\right\vert ,\ |\partial_{I_{\pm}}x|,\ \left\vert \omega_{\pm
}(I_{\pm})-\frac{2\pi}{P_{0}}\left\vert v\right\vert \right\vert ,\ \left\vert
\frac{d}{dI_{\pm}}\left(  \frac{1}{\omega_{\pm}^{\prime}(I_{\pm})}\right)
\right\vert ,\ \left\vert x-\frac{P_{\beta}}{2\pi}\theta_{\pm}\right\vert
\lesssim\varepsilon,
\]
and
\begin{equation}
\omega_{\pm}(I_{\pm})=\frac{2\pi}{P_{\beta}}\sqrt{2e_{\pm}}+O(\frac
{\varepsilon}{\sqrt{2e_{\pm}}}). \label{expansion-w}%
\end{equation}

\end{lemma}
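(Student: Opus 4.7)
The plan is to exploit the fact that $|e_\pm|\geq \sigma/2$ while $\varepsilon\ll\sigma$, so in the regime considered we have $2(e_\pm\mp\beta(x'))\geq \sigma-2\varepsilon\gtrsim\sigma$ uniformly in $x'$. Consequently the integrands $(2(e_\pm\mp\beta))^{-k/2}$ that define all the action--angle quantities are smooth, bounded, and uniformly nondegenerate, and admit a Taylor expansion in $\beta=O(\varepsilon)$ with no small denominators. Every estimate in the lemma then follows from carrying this expansion one or two orders in $\varepsilon$, together with $P_\beta=P_0+O(\varepsilon)$ from Lemma 1.1.

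Concretely, I would first expand
\begin{equation*}
\frac{1}{\sqrt{2(e_\pm\mp\beta(x'))}}=\frac{1}{\sqrt{2e_\pm}}\left(1\pm\frac{\beta(x')}{2e_\pm}+O\!\left(\frac{\varepsilon^{2}}{|e_\pm|^{2}}\right)\right),
\end{equation*}
integrate over $[0,P_\beta]$ to get $T_\pm(e_\pm)=P_\beta/\sqrt{2e_\pm}+O(\varepsilon/|e_\pm|^{3/2})$, and invert to obtain $\omega_\pm(I_\pm)=(2\pi/P_\beta)\sqrt{2e_\pm}+O(\varepsilon/\sqrt{2e_\pm})$, which is (\ref{expansion-w}). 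Combining this with $|v|=\sqrt{2(e_\pm\mp\beta)}=\sqrt{2e_\pm}+O(\varepsilon/\sqrt{2e_\pm})$ and $P_\beta-P_0=O(\varepsilon)$ yields the bound $|\omega_\pm-(2\pi/P_0)|v||\lesssim\varepsilon$.

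Second, using $dI_\pm/de_\pm=T_\pm/(2\pi)$ together with
\begin{equation*}
\frac{dT_\pm}{de_\pm}=-\int_0^{P_\beta}\frac{dx'}{(2(e_\pm\mp\beta))^{3/2}}=-\frac{P_\beta}{(2e_\pm)^{3/2}}+O\!\left(\frac{\varepsilon}{|e_\pm|^{5/2}}\right),
\end{equation*}
the chain rule produces $\omega'_\pm(I_\pm)=-(2\pi)^{2}T_\pm^{-3}(dT_\pm/de_\pm)=(2\pi/P_\beta)^{2}+O(\varepsilon)$, and one further differentiation of the same type yields the $O(\varepsilon)$ bound on $d/dI_\pm(1/\omega'_\pm)$. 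For the spatial estimates, the same expansion applied to the angle formula gives
\begin{equation*}
\theta_\pm=\frac{2\pi}{T_\pm}\int_0^x\frac{dx'}{\sqrt{2(e_\pm\mp\beta)}}=\frac{2\pi}{P_\beta}x+O(\varepsilon),
\end{equation*}
which is the bound on $x-(P_\beta/2\pi)\theta_\pm$. Inverting this nearly-linear diffeomorphism yields $|\partial x/\partial\theta_\pm|\lesssim 1$, while differentiating it at fixed $\theta_\pm$ gives $|\partial_{I_\pm}x|\lesssim\varepsilon$, since the $I$-dependence of $x$ enters only through the $\beta$-corrections which are $O(\varepsilon)$.

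The main obstacle is the bookkeeping for the higher derivatives needed for $d/dI_\pm(1/\omega'_\pm)$ and $\partial_{I_\pm}x$: each $I$-derivative brings in a factor of $2\pi/T_\pm\sim\sqrt{2e_\pm}/P_\beta$ and another power of $(2(e_\pm\mp\beta))^{-1/2}$ in the integrand, so one must carefully track cancellations of the leading $\varepsilon$-independent contributions against the corresponding homogeneous quantities (those of the pure free streaming problem at $\beta\equiv 0$). The lower bound $|e_\pm|\geq\sigma/2$ is essential throughout, since it prevents the blowup of $(2(e_\pm\mp\beta))^{-k/2}$ that would occur near the separatrix; this is exactly the purpose of restricting attention to the regime outside the flat zone of $\mu_\pm$.
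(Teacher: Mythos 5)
Your proposal is correct and follows essentially the same route as the paper: Taylor-expanding $\left(2(e_\pm\mp\beta)\right)^{-1/2}$ in $\beta=O(\varepsilon)$ using $|e_\pm|\geq\sigma/2\gg\varepsilon$ to get $T_\pm(e_\pm)=P_\beta/\sqrt{2e_\pm}+O(\varepsilon/e_\pm^{3/2})$, the angle expansion $\theta_\pm=\frac{2\pi}{P_\beta}x+O(\varepsilon)$, the frequency expansion (\ref{expansion-w}), and then the derivative bounds via $dI_\pm/de_\pm=T_\pm/2\pi$ and the chain rule. The only cosmetic difference is that the paper substitutes the explicit leading-order form $\beta=\varepsilon\cos\frac{2\pi}{P_\beta}x+O(\varepsilon^2)$ from (\ref{leading-order-beta}) into these expansions, which does not change the estimates.
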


\begin{proof}
Fix $e_{\pm}\geq\frac{\sigma}{2}>>\varepsilon,$ from $\beta=\varepsilon
\cos\frac{2\pi}{P_{\beta}}x+O(\varepsilon^{2}),$ we get \textit{\ }%
\begin{equation}
T_{\pm}(e_{\pm})=\int_{0}^{P_{\beta}}\frac{dx^{\prime}}{\sqrt{2(e_{\pm}%
\mp\beta(x^{\prime}))}}=\frac{P_{\beta}}{\sqrt{2e_{\pm}}}+O(\frac{\varepsilon
}{\left(  e_{\pm}\right)  ^{\frac{3}{2}}}),\label{texpansion}%
\end{equation}
since by Taylor expansion%
\begin{equation}
\breve{\{}2(e_{\pm}\mp\beta(x^{\prime}))\}^{-1/2}=\{2e_{\pm}\}^{-1/2}%
\{1\mp\frac{\varepsilon}{2e_{\pm}}\cos\frac{2\pi x^{\prime}}{P_{\beta}%
}+O\left(  \left(  \frac{\varepsilon}{e_{\pm}}\right)  ^{2}\right)
.\label{expansion-sqrt}%
\end{equation}
Similarly, since $\frac{P_{\beta}}{T_{\pm}(e_{\pm})\sqrt{2e_{\pm}}}%
=1+O(\frac{\varepsilon}{e_{\pm}})$,
\begin{align}
\theta_{\pm} &  =\frac{2\pi}{T_{\pm}(e_{\pm})}\int_{0}^{x}\frac{dx^{\prime}%
}{\sqrt{2(e_{\pm}\mp\beta(x^{\prime}))}}\label{thetaexpansion}\\
&  =\frac{2\pi}{T_{\pm}(e)\sqrt{2e_{\pm}}}\left\{  x\mp\frac{\varepsilon
P_{\beta}}{4\pi e_{\pm}}\sin\frac{2\pi}{P_{\beta}}x+O(\left(  \frac
{\varepsilon}{e_{\pm}}\right)  ^{2})\right\}  \nonumber\\
&  =\frac{2\pi}{P_{\beta}}x\mp\frac{\varepsilon}{2e_{\pm}}\sin\frac{2\pi
}{P_{\beta}}x+O(\varepsilon^{2}),\nonumber
\end{align}
and thus$\ \left\vert \frac{\partial x}{\partial\theta}\right\vert
\lesssim1,\left\vert x-\frac{P_{\beta}}{2\pi}\theta_{\pm}\right\vert
,\partial_{I_{\pm}}x\lesssim\varepsilon$. By (\ref{texpansion}) and
(\ref{expansion-sqrt}), we have
\[
\omega_{\pm}(I_{\pm})=\frac{2\pi}{P_{\beta}}\sqrt{2e_{\pm}}+O(\frac
{\varepsilon}{\sqrt{2e_{\pm}}}).
\]
Combined above with $\left\vert v\right\vert =\sqrt{2e_{\pm}}+O(\varepsilon)$,
we get $\left\vert \omega_{\pm}(I_{\pm})-\frac{2\pi}{P_{0}}\left\vert
v\right\vert \right\vert \lesssim\varepsilon$. Note that
\[
\frac{dI_{\pm}}{de_{\pm}}=\frac{T_{\pm}(e_{\pm})}{2\pi}=\frac{1}{\omega_{\pm
}(I_{\pm})},
\]
and from (\ref{texpansion})%
\[
T_{\pm}^{\prime}(e_{\pm})=-\frac{P_{\beta}}{\left(  \sqrt{2e_{\pm}}\right)
^{3}}+O(\varepsilon)=-\frac{1}{P_{\beta}^{2}}T_{\pm}^{3}(e_{\pm}%
)+O(\varepsilon),
\]
so
\[
\omega_{\pm}^{\prime}(I_{\pm})=-\frac{2\pi}{T_{\pm}^{2}\left(  e_{\pm}\right)
}T_{\pm}^{\prime}(e_{\pm})\frac{de_{\pm}}{dI_{\pm}}=\left(  \frac{2\pi
}{P_{\beta}}\right)  ^{2}+O(\varepsilon)
\]
which implies that
\[
\left\vert \omega_{\pm}^{\prime}(I_{\pm})-\left(  \frac{2\pi}{P_{\beta}%
}\right)  ^{2}\right\vert ,\ \left\vert \frac{d}{dI_{\pm}}\left(  \frac
{1}{\omega_{\pm}^{\prime}(I_{\pm})}\right)  \right\vert \lesssim\varepsilon.
\]
This finishes the proof of the lemma.
\end{proof}

\textbf{The density operator }$\rho(\lambda,\varepsilon):\ $Consider a BGK
wave solution
\[
\lbrack\mu_{+,\pm}(e_{+}),\mu_{-,\pm}(e_{-}),-\beta_{x}]
\]
as in Theorems \ref{thm-main-uneven} and \ref{thm-main-even}. For an
eigenvalue $\lambda=a+bi,$ let $\left(  g_{\pm},\phi\right)  $ be the
eigenfunction satisfying (\ref{vlasoveigen}) and (\ref{poissoneigen}). We will
do the Fourier expansion of $g_{\pm}$ and $\phi\ $in both action-angle
variables $(I_{\pm},\theta_{\pm})$.

Define the spaces
\[
H_{\varepsilon}^{1}=\{P_{\beta}-\text{periodic\ }H^{1}\ \text{functions with
zero mean}\},
\]
and
\[
H_{0}^{1}=\{P_{0}-\text{periodic\ }H^{1}\ \text{functions with zero mean}\}.
\]
For any potential $\phi\in H_{\varepsilon}^{1},$ we expand
\begin{equation}
\phi=\sum_{k\in\mathbf{Z}}\phi_{k}^{\pm}(I_{\pm})e^{ik\theta_{\pm}},\text{
\ \ where\ }\phi_{k}^{\pm}(I_{\pm})=\frac{1}{2\pi}\int_{0}^{2\pi}%
\phi(x)e^{-ik\theta_{\pm}}d\theta_{\pm}. \label{expansion-phi}%
\end{equation}

Then, we expand
\begin{equation}
g_{\pm}=\sum g_{k}^{\pm}(I_{\pm})e^{ik\theta_{\pm}},\ k\in\mathbf{Z}.
\label{expansion-g-pn}%
\end{equation}

\textit{Inside the separatrix\ }($e_{\pm}(x,v)<-\min\beta$): Since $|e_{\pm
}(x,v)|=\left\vert \frac{1}{2}v^{2}\pm\beta\right\vert \leq\varepsilon
<<\sigma$ so that $\phi_{x}v\mu_{\pm,\pm}^{\prime}\equiv0$ in
(\ref{vlasoveigen}), thanks to the flatness assumption for $\mu_{\pm,\pm}$. By
(\ref{transport-inside}),
\begin{equation}
v\partial_{x}g_{\pm}\mp\beta_{x}\partial_{v}g_{\pm}=\omega_{\pm}(I_{\pm
})\partial_{\theta_{\pm}}g_{\pm},\label{transport-action-angle}%
\end{equation}
in the trapped region, so $\lambda g_{\pm}+\omega_{\pm}(I_{\pm})\partial
_{\theta_{\pm}}g_{\pm}=0$ from (\ref{vlasoveigen}). We therefore deduce that
$g_{k}^{\pm}(I_{\pm})\equiv0\ $inside separatrix, for all $k\in\mathbf{Z.}$

\textit{Outside the separatrix (}$e_{\pm}(x,v)>-\min\beta$):$\ $By
(\ref{transport-outside}), the equation (\ref{vlasoveigen}) becomes
\begin{align}
\lambda g_{+}\pm\omega_{+}(I_{+})\partial_{\theta_{+}}g_{+}\mp\mu_{+,\pm
}^{\prime}(e_{+})\omega_{+}(I_{+})\partial_{\theta_{+}}\phi &  =0,\text{ for
}\pm v>0,\label{vlasovangle}\\
\lambda g_{-}\pm\omega_{-}(I_{-})\partial_{\theta_{-}}g_{-}\pm\mu_{-,\pm
}^{\prime}(e_{-})\omega_{-}(I_{-})\partial_{\theta_{-}}\phi &  =0,\text{ for
}\pm v>0, \label{g-}%
\end{align}
in the untrapped region. We may solve (\ref{vlasovangle})-(\ref{g-}) by using
the expansions (\ref{expansion-phi})-(\ref{expansion-g-pn}) to obtain: If
$\lambda\neq0$, then $g_{k}^{\pm}(I)\equiv0$ for $k=0$ and for $k\neq0$,
\begin{equation}
g_{k}^{+}(I_{+})=\frac{\mu_{+,\pm}^{\prime}(e_{+})\omega_{+}\left(
I_{+}\right)  \phi_{k}^{+}(I_{+})}{\omega_{+}\pm\lambda/ik}\text{\ \ for }\pm
v>0, \label{vlasovsolver+}%
\end{equation}
and
\begin{equation}
g_{k}^{-}(I_{-})=-\frac{\mu_{-,\pm}^{\prime}(e_{-})\omega_{-}\left(
I_{-}\right)  \phi_{k}^{-}(I_{-})}{\omega_{+}\pm\lambda/ik}\text{\ \ for }\pm
v>0. \label{vlasovsolver-}%
\end{equation}
\ \ We note that (\ref{vlasovsolver+})-(\ref{vlasovsolver-}) are also valid
inside the separatrix thanks to the flatness of $\mu_{\pm,\pm}$.

By using (\ref{vlasovsolver+})-(\ref{vlasovsolver-})$,$ we define the charge
density operator as
\begin{align}
&  \ \ \rho(\lambda,\varepsilon)\phi\label{k}\\
&  =\sum_{k\neq0,\ \pm}\left(  \int_{v>0}e^{ik\theta_{\pm}}\frac{\omega_{\pm
}\mu_{\pm,+}^{\prime}(e_{\pm})}{\omega_{\pm}+\frac{\lambda}{ik}}\phi_{k}^{\pm
}(I_{\pm})dv+\int_{v<0}e^{ik\theta_{\pm}}\frac{\omega_{\pm}\mu_{\pm,-}%
^{\prime}(e_{\pm})}{\omega_{\pm}-\frac{\lambda}{ik}}\phi_{k}^{\pm}(I_{\pm
})dv\right)  .\nonumber
\end{align}
In the formula (\ref{k}), we note that $\left(  I_{\pm},\theta_{\pm}\right)  $
in the right hand side can be restricted to the untrapped region since
$\mu_{\pm,-}^{\prime}(e_{\pm}),\ \mu_{\pm,+}^{\prime}(e_{\pm})=0$ in the
trapped region. We also note that for any $\lambda\neq0$,
\begin{align*}
\int_{0}^{P_{\beta}}\rho(\lambda,\varepsilon)\phi\ dx  &  =\int\int\left(
g_{+}-g_{-}\right)  \ dxdv\\
&  =\int\int g_{+}dI_{+}d\theta_{+}-\int\int g_{-}dI_{-}d\theta_{-}=0,
\end{align*}
by using that $g_{k}^{\pm}=0$ when $k=0$. Hence $-\partial_{x}^{-2}%
\rho(\lambda,\varepsilon)\phi$ is well-defined and the self-consistent Poisson
equation (\ref{poissoneigen}) is reduced to $(\mathbf{I}+\partial_{x}^{-2}%
\rho(\lambda,\varepsilon))\phi=0.$ So we conclude that for nonzero
eigenvalues, the eigenspaces of (\ref{vlasoveigen})-(\ref{poissoneigen}) are
equivalent to null spaces of the operator
\[
\mathbf{I}+\partial_{x}^{-2}\rho(\lambda,\varepsilon):H_{\varepsilon}%
^{1}\rightarrow H_{\varepsilon}^{1},
\]
where $\partial_{x}^{-2}$ denotes twice anti-derivatives with zero mean. To
apply Lemma \ref{lemma-stability-spectra}, we rescale above operators to be
defined in the same function space $H_{0}^{1}$. Let $G_{\beta}^{-1}\phi
=\phi(\frac{P_{0}}{P_{\beta}}x)$ be the mapping from $H_{0}^{1}\rightarrow
H_{\varepsilon}^{1},$ and define the operator%
\[
K(\lambda,\varepsilon)=G_{\beta}\partial_{x}^{-2}\rho(\lambda,\varepsilon
)G_{\beta}^{-1}%
\]
in $H_{0}^{1}$.

To study the properties of the operators $K(\lambda,\varepsilon)$, we
introduce two lemmas.

\begin{lemma}
\label{lemma-hardy} If $u\left(  v\right)  \in W^{s,p}\left(  \mathbf{R}%
\right)  $ $\left(  p>1,s>\frac{1}{p}\right)  ,\ $then for any $z\in
\mathbf{C}$ with $\operatorname{Re}z\neq0$, we have
\[
\left\vert \int_{\mathbf{R}}\frac{u\left(  v\right)  }{v-z}dv\right\vert \leq
C\left\Vert u\right\Vert _{W^{s,p}\left(  \mathbf{R}\right)  },
\]
for some constant $C$ independent of $z$.
\end{lemma}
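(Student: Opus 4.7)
The plan is to combine the Sobolev embedding
\begin{equation*}
W^{s,p}(\mathbf{R}) \hookrightarrow L^{p}(\mathbf{R}) \cap C^{0,\alpha}(\mathbf{R}),
\end{equation*}
which holds for some $\alpha \in (0,\min(s-1/p,1))$ since $s>1/p$, with a near--far split of the integral around $a := \operatorname{Re} z$. The embedding furnishes $\|u\|_{L^{\infty}} + \|u\|_{L^{p}} + [u]_{C^{0,\alpha}} \lesssim \|u\|_{W^{s,p}}$, where $[u]_{C^{0,\alpha}}$ denotes the H\"older seminorm. Throughout I treat $z$ as lying off the real integration axis (the natural condition for $1/(v-z)$ to be non-singular on $\mathbf{R}$).

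Concretely, I would write $\int_{\mathbf{R}} u(v)/(v-z)\,dv = I_{1} + I_{2}$, where $I_{1}$ collects the contribution from $|v-a| \geq 1$ and $I_{2}$ from $|v-a| < 1$. On the outer region $|v-z| \geq |v-a| \geq 1$, so H\"older's inequality with conjugate exponents $(p,q)$, $q=p/(p-1)>1$, yields
\begin{equation*}
|I_{1}| \leq \|u\|_{L^{p}} \left( \int_{|t| \geq 1} |t|^{-q}\,dt \right)^{1/q} \lesssim \|u\|_{W^{s,p}}.
\end{equation*}
For $I_{2}$ I would decompose $u(v) = [u(v)-u(a)] + u(a)$. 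The H\"older bound $|u(v)-u(a)| \leq [u]_{C^{0,\alpha}} |v-a|^{\alpha}$ gives
\begin{equation*}
\left| \int_{|v-a|<1} \frac{u(v)-u(a)}{v-z}\,dv \right| \leq [u]_{C^{0,\alpha}} \int_{|v-a|<1} |v-a|^{\alpha-1}\,dv \lesssim \|u\|_{W^{s,p}},
\end{equation*}
which converges because $\alpha>0$. The leftover piece is $u(a) \int_{|v-a|<1} dv/(v-z)$, which after the shift $t=v-a$ becomes $u(a)[\log(1-i\operatorname{Im} z) - \log(-1-i\operatorname{Im} z)]$, bounded in modulus by a universal constant once $z$ is off the integration contour; combined with $|u(a)| \leq \|u\|_{L^{\infty}} \lesssim \|u\|_{W^{s,p}}$, this too is controlled.

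The main obstacle, and the whole raison d'\^etre of the $u(a)$-subtraction, is to obtain a constant $C$ that is \emph{uniform} in $z$ as $z$ approaches the real axis. Without the subtraction, the integrand has a $(v-a)^{-1}$ singularity whose integral is only conditionally convergent as $\operatorname{Im} z \to 0$; it is precisely the H\"older regularity $\alpha = s-1/p > 0$ supplied by the hypothesis $s>1/p$ that upgrades this singularity to the absolutely integrable $(v-a)^{\alpha-1}$, so that the near-singularity estimate is insensitive to the precise location of $z$. Everything else in the argument is routine Sobolev embedding and H\"older's inequality.
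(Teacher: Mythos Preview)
Your proposal is correct and follows essentially the same approach as the paper: Sobolev embedding $W^{s,p}\hookrightarrow C^{0,\alpha}\cap L^{p}$, a near--far split around $a=\operatorname{Re}z$, H\"older's inequality on the far piece, and the $u(a)$-subtraction on the near piece to exploit H\"older regularity. The only cosmetic difference is that the paper separates the integral into real and imaginary parts (bounding the residual $u(a)$-term via $\int b/((v-a)^{2}+b^{2})\,dv\le\pi$) whereas you keep the complex integral intact and bound $u(a)\int_{-1}^{1}dt/(t-ib)=2i\,u(a)\arctan(1/b)$ directly; both give the same uniform constant.
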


\begin{proof}
Let $z=a+ib$, with $a,b\in\mathbf{R},\ b\neq0$. Then
\[
\int_{\mathbf{R}}\frac{u\left(  v\right)  }{v-z}dv=\int_{\mathbf{R}}%
\frac{u\left(  v\right)  \left(  v-a\right)  +ibu\left(  v\right)  }{\left(
v-a\right)  ^{2}+b^{2}}dv.
\]
Since$\ s>\frac{1}{p}$, the space $W^{s,p}\left(  \mathbf{R}\right)  $ is
embedded to the H\"{o}lder space $C^{0,\gamma}$ with $\gamma\in\left(
0,s-\frac{1}{p}\right)  $. So
\[
\left\vert u\left(  v\right)  -u\left(  a\right)  \right\vert \leq\left\vert
v-a\right\vert ^{\gamma}\left\Vert u\right\Vert _{C^{0,\alpha}}\leq
C\left\Vert u\right\Vert _{W^{s,p}}\left\vert v\right\vert ^{\gamma}.
\]
The real part is estimated by
\begin{align*}
&  \left\vert \int_{\mathbf{R}}\frac{u\left(  v\right)  \left(  v-a\right)
}{\left(  v-a\right)  ^{2}+b^{2}}dv\right\vert \\
&  \leq\left\vert \int_{-1+a}^{1+a}\frac{\left(  u\left(  v\right)  -u\left(
a\right)  \right)  \left(  v-a\right)  }{\left(  v-a\right)  ^{2}+b^{2}%
}dv\right\vert +\int_{\left\vert v-a\right\vert \geq1}\left\vert
\frac{u\left(  v\right)  }{v-a}\right\vert dv\\
&  \leq\int_{-1+a}^{1+a}\left\vert \frac{u\left(  v\right)  -u\left(
a\right)  }{v-a}\right\vert dv+\left(  \int_{\left\vert v-a\right\vert \geq
1}\frac{1}{\left\vert v-a\right\vert ^{p^{\prime}}}dv\right)  ^{\frac
{1}{p^{\prime}}}\left\Vert u\right\Vert _{L^{p}}\\
&  \lesssim\left\Vert u\right\Vert _{W^{s,p}}\int_{-1+a}^{1+a}\left\vert
v-a\right\vert ^{-1+\gamma}dv+\left\Vert u\right\Vert _{L^{p}}\lesssim
\left\Vert u\right\Vert _{W^{s,p}}.
\end{align*}
Similarly, for the imaginary part, we have
\begin{align*}
&  \left\vert \int_{\mathbf{R}}\frac{bu\left(  v\right)  }{\left(  v-a\right)
^{2}+b^{2}}dv\right\vert \\
&  \leq\left\vert \int_{-1+a}^{1+a}\frac{\left(  u\left(  v\right)  -u\left(
a\right)  \right)  b}{\left(  v-a\right)  ^{2}+b^{2}}dv\right\vert
+\pi\left\vert u\left(  a\right)  \right\vert +\int_{\left\vert v-a\right\vert
\geq1}\left\vert \frac{u\left(  v\right)  }{v-a}\right\vert dv\\
&  \lesssim\left\Vert u\right\Vert _{W^{s,p}}.
\end{align*}
Here, in the above we use
\[
\left\vert \int_{-1+a}^{1+a}\frac{b}{\left(  v-a\right)  ^{2}+b^{2}%
}dv\right\vert \leq\int_{\mathbf{R}}\frac{1}{1+y^{2}}dy=\pi.
\]

\end{proof}

\begin{lemma}
\label{lemma-bound-phi-k}Given $\phi\in H^{1}\left(  0,P_{\beta}\right)  $ and
$\phi=\sum_{k}\phi_{k}^{\pm}(I_{\pm})e^{ik\theta_{\pm}}$,

i) If $w:\mathbf{R}^{+}\rightarrow\mathbf{R}^{+}$ and $\int w\left(  \frac
{1}{2}v^{2}\right)  dv<\infty$, then
\begin{equation}
\sum_{k}\int w\left(  e_{\pm}\right)  \left\vert \phi_{k}^{\pm}(I_{\pm
})\right\vert ^{2}dI_{\pm}\lesssim\left\Vert \phi\right\Vert _{L^{2}}%
^{2},\label{inequality-phi-k}%
\end{equation}

and%
\begin{equation}
\sum_{k}\int w\left(  e_{\pm}\right)  \left\vert \phi_{k}^{\pm\prime}(I_{\pm
})\right\vert ^{2}dI_{\pm}\lesssim\left\Vert \beta\right\Vert _{L^{\infty}%
}^{2}\left\Vert \phi_{x}\right\Vert _{L^{2}}^{2}%
.\label{inequality-phi-k-prime}%
\end{equation}

2) If $w:\mathbf{R}^{+}\rightarrow\mathbf{R}^{+}$ and $\int v^{2}w\left(
\frac{1}{2}v^{2}\right)  dv<\infty$, then%
\[
\sum_{k\neq0}k^{2}\int\omega_{\pm}^{2}\left(  I_{\pm}\right)  w\left(  e_{\pm
}\right)  \left\vert \phi_{k}^{\pm}(I_{\pm})\right\vert ^{2}dI_{\pm}%
\lesssim\left\Vert \phi_{x}\right\Vert _{L^{2}}^{2}.
\]

\end{lemma}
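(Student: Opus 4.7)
The plan is to reduce each of the three estimates to an $(x,v)$-integral by combining Parseval's identity in $\theta_{\pm}$ (at each fixed $I_{\pm}$) with the measure-preserving change of variables $(I_{\pm},\theta_{\pm})\mapsto(x,v)$, applied separately on the trapped region $\Omega_0$ and the untrapped regions $\Omega_{\pm}$ (the separatrix has measure zero). In each of these regions the Jacobian is $1$, so after Parseval the weighted $I_{\pm}$-integration collapses to an $(x,v)$-integration in which $\phi(x)$ and $\phi_x(x)$ are $v$-independent and can be factored out. The $v$-integral of the weight, $\int w(\tfrac{1}{2}v^2\pm\beta(x))\,dv$, is then uniformly bounded in $x$ by a constant multiple of the reference integral $\int w(\tfrac{1}{2}v^2)\,dv<\infty$, because $|\beta(x)|\leq\varepsilon\ll 1$; the analogous statement with an extra $v^2$ factor covers part (ii).

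For the first estimate of (i), Parseval at fixed $I_{\pm}$ gives $\sum_k |\phi_k^{\pm}(I_{\pm})|^2=\frac{1}{2\pi}\int_0^{2\pi}|\phi(x)|^2\,d\theta_{\pm}$; multiplying by $w(e_{\pm})$, integrating in $I_{\pm}$, and changing variables produces $\frac{1}{2\pi}\int_0^{P_\beta}|\phi(x)|^2\int_{\mathbb{R}}w(\tfrac{1}{2}v^2\pm\beta(x))\,dv\,dx\lesssim\|\phi\|_{L^2}^2$, as required. For the second estimate, I differentiate under the integral sign:
\begin{equation*}
\phi_k^{\pm\prime}(I_{\pm})=\frac{1}{2\pi}\int_0^{2\pi}\phi_x(x)\,\partial_{I_{\pm}}x\cdot e^{-ik\theta_{\pm}}\,d\theta_{\pm},
\end{equation*}
which is the $k$-th Fourier coefficient of $\phi_x(x)\,\partial_{I_{\pm}}x$ in $\theta_{\pm}$. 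Parseval, combined with the pointwise bound $|\partial_{I_{\pm}}x|\lesssim\varepsilon=\|\beta\|_{L^\infty}$ supplied by Lemma \ref{epestimate}, reduces the claim to the argument of the previous sentence applied to $\phi_x$, with an extra $\|\beta\|_{L^\infty}^2$ prefactor.

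The key new ingredient for part (ii) is the transport identity in action-angle coordinates from Lemma \ref{lemma-action-angle}. Applied to $\phi(x)$, which does not depend on $v$, it reads $v\phi_x(x)=\pm\omega_{\pm}(I_{\pm})\,\partial_{\theta_{\pm}}\phi$, where the sign depends on the region but becomes immaterial after squaring. Parseval in $\theta_{\pm}$ then yields
\begin{equation*}
\sum_{k\neq 0}k^2\,\omega_{\pm}(I_{\pm})^2\,|\phi_k^{\pm}(I_{\pm})|^2=\frac{1}{2\pi}\int_0^{2\pi}v^2|\phi_x(x)|^2\,d\theta_{\pm}.
\end{equation*}
Multiplying by $w(e_{\pm})$, integrating in $I_{\pm}$, converting back to $(x,v)$, and using $\int v^2 w(\tfrac{1}{2}v^2\pm\beta(x))\,dv<\infty$ uniformly in $x$ delivers the required bound $\lesssim\|\phi_x\|_{L^2}^2$.

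The main technical point throughout is the uniform $x$-boundedness of the weighted $v$-integrals. For bounded $|v|$ this is immediate from continuity and the smallness of $\beta$; for large $|v|$ the substitution $v^2\mapsto v^2\pm 2\beta(x)$ reduces the integral to the reference one with Jacobian close to $1$. The argument never requires sharp control near the separatrix, since the transport identity and the volume-preservation hold in each region $\Omega_0,\Omega_{\pm}$ and the separatrix itself is a null set.
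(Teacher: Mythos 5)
Your proposal is correct and follows essentially the same route as the paper: Parseval in $\theta_{\pm}$ combined with the Jacobian-$1$ change of variables $(I_{\pm},\theta_{\pm})\to(x,v)$, the bound $|\partial_{I_{\pm}}x|\lesssim\varepsilon$ from Lemma \ref{epestimate} for (\ref{inequality-phi-k-prime}), and the identity $v\phi_{x}=\pm\omega_{\pm}\partial_{\theta_{\pm}}\phi$ from Lemma \ref{lemma-action-angle} for part (ii). The only difference is that you spell out the uniform-in-$x$ bound $\sup_{x}\int w(\tfrac{1}{2}v^{2}\pm\beta(x))\,dv\lesssim 1$, which the paper uses implicitly; this is harmless since in the applications the weight vanishes near the separatrix.
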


\begin{proof}
Proof of i): Since $\left(  x,v\right)  \rightarrow\left(  I_{\pm},\theta
_{\pm}\right)  $ has Jacobian $1$, so
\begin{align*}
\sum_{k}\int w\left(  e_{\pm}\right)  \left\vert \phi_{k}^{\pm}(I_{\pm
})\right\vert ^{2}dI_{\pm} &  =\int\int w\left(  e_{\pm}\right)  \left\vert
\phi\left(  x\right)  \right\vert ^{2}dxdv\\
&  \leq\sup_{x}\int w\left(  e_{\pm}\right)  dv\left\Vert \phi\right\Vert
_{L^{2}}^{2}\lesssim\left\Vert \phi\right\Vert _{L^{2}}^{2}.
\end{align*}
To prove (\ref{inequality-phi-k-prime}), we notice that
\[
\phi_{k}^{\pm\prime}(I_{\pm})=\frac{1}{2\pi}\int_{0}^{2\pi}e^{-ik\theta_{\pm}%
}\phi^{\prime}(x)\frac{\partial x}{\partial I_{\pm}}d\theta_{\pm},
\]
and by Lemma \ref{epestimate}, $\left\vert \frac{\partial x}{\partial I_{\pm}%
}\right\vert \lesssim\varepsilon$. So
\begin{align*}
\sum_{k}\int w\left(  e_{\pm}\right)  \left\vert \phi_{k}^{\pm\prime}(I_{\pm
})\right\vert ^{2}dI_{\pm} &  =\int\int w\left(  e_{\pm}\right)  \left\vert
\phi^{\prime}(x)\right\vert ^{2}\left\vert \frac{\partial x}{\partial I_{\pm}%
}\right\vert ^{2}dxdv\\
&  \lesssim\varepsilon^{2}\sup_{x}\int w\left(  e_{\pm}\right)  dv\left\Vert
\phi^{\prime}\right\Vert _{L^{2}}^{2}\lesssim\left\Vert \beta\right\Vert
_{L^{\infty}}^{2}\left\Vert \phi^{\prime}\right\Vert _{L^{2}}^{2}.
\end{align*}

Proof of ii): We note that by Lemma \ref{lemma-action-angle},
\[
v\phi_{x}=v\partial_{x}\mp\beta_{x}\partial_{v}=\left\{
\begin{array}
[c]{cc}%
\omega_{\pm}(I_{\pm})\partial_{\theta_{\pm}}\phi & \text{when }\left(
x,v\right)  \in\Omega_{0}\cup\Omega_{+}\\
-\omega_{\pm}(I_{\pm})\partial_{\theta_{\pm}}\phi & \text{when }\left(
x,v\right)  \in\Omega_{-}%
\end{array}
\right.  ,
\]
where $\Omega_{0},\Omega_{\pm}$ are defined in (\ref{defn-omego-0}),
(\ref{defn-omega-+}), (\ref{defn-omega-}). Thus,
\begin{align*}
&  \ \ \ \ \ \sum_{k\neq0}k^{2}\int\omega_{\pm}^{2}\left(  I_{\pm}\right)
w\left(  e_{\pm}\right)  \left\vert \phi_{k}^{\pm}(I_{\pm})\right\vert
^{2}dI_{\pm}\\
&  =\int\int w\left(  e_{\pm}\right)  \left\vert v\phi_{x}\right\vert
^{2}\ dxdv\\
&  \leq\sup_{x}\int\int w\left(  e_{\pm}\right)  \left\vert v\right\vert
^{2}\ dv\ \left\Vert \phi^{\prime}\right\Vert _{L^{2}}^{2}\lesssim\left\Vert
\phi_{x}\right\Vert _{L^{2}}^{2}.
\end{align*}

\end{proof}

\begin{lemma}
\label{Lemma-operator-analytic}

(i) $0\neq\lambda$ is an eigenvalue of the linearized VP operator iff there
exists $0\neq\phi\in H_{0}^{1}\ $such that $\{I+K(\lambda,\varepsilon
)\}\phi=0$.

(ii) For $\varepsilon<<\sigma$ and any $\lambda\in\mathbf{C}$ with
$\operatorname{Re}\lambda>0$, the operator $K(\lambda,\varepsilon):$
$H_{0}^{1}\rightarrow H_{0}^{1}\ $is uniformly bounded to $\lambda$.

(iii) For $\varepsilon<<\sigma$, $K(\lambda,\varepsilon):H_{0}^{1}\rightarrow
H_{0}^{1}$ is analytic in $\lambda$ when $|\lambda|<<1$.
\end{lemma}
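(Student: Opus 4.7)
The plan is to dispatch the three parts in order. Part (i) is essentially unpacking the construction just completed, (ii) sets up the analytic framework by exploiting the Hardy-type bound in Lemma \ref{lemma-hardy}, and (iii) is the payoff of the flatness hypothesis.

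For (i), the forward direction is already implicit in the derivation above: starting from an eigenfunction $(g_\pm,\phi)$ with $\lambda\neq 0$, the formulas (\ref{vlasovsolver+})--(\ref{vlasovsolver-}) (together with $g_k^\pm\equiv 0$ inside the separatrix) reduce (\ref{poissoneigen}) to $(\mathbf{I}+\partial_x^{-2}\rho(\lambda,\varepsilon))\phi=0$ in $H_\varepsilon^1$; conjugation by $G_\beta$ gives $(\mathbf{I}+K(\lambda,\varepsilon))\phi=0$ in $H_0^1$. Conversely, given $\phi\in H_0^1$ with $(\mathbf{I}+K(\lambda,\varepsilon))\phi=0$, I would set $\tilde\phi=G_\beta^{-1}\phi$, reconstruct $g_\pm$ through (\ref{vlasovsolver+})--(\ref{vlasovsolver-}) (extended by zero on the trapped set), and check $g_\pm\in X_\pm$ using flatness, which confines $\mu'_{\pm,\pm}$ to $|e_\pm|\geq\sigma$ where the denominators are bounded away from zero.

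For (ii), since the rescaling $G_\beta^{\pm 1}$ is a uniform isomorphism between $H_0^1$ and $H_\varepsilon^1$, it is enough to bound $\partial_x^{-2}\rho(\lambda,\varepsilon)$ on $H_\varepsilon^1$ uniformly for $\operatorname{Re}\lambda>0$. The key algebraic identity is
\[
\frac{\omega_\pm}{\omega_\pm\pm\lambda/(ik)}=1\mp\frac{\lambda/(ik)}{\omega_\pm\pm\lambda/(ik)},
\]
which splits $\rho=\rho_1+\rho_2$. The $\lambda$-free piece $\rho_1$ collapses via $\sum_{k\neq 0}\phi_k^\pm(I_\pm)e^{ik\theta_\pm}=\phi(x)-\phi_0^\pm(I_\pm)$ into a sum of a multiplication operator by $\int\mu'_{\pm,\pm}\,dv$ and an averaging piece, both controlled by Lemma \ref{lemma-bound-phi-k}(i). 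For $\rho_2$ I would change variables $v\to\omega_\pm$ on the support of $\mu'_{\pm,\pm}$ using the Jacobian bounds of Lemma \ref{epestimate}, then apply Lemma \ref{lemma-hardy} with $z=\mp\lambda/(ik)$; since $\operatorname{Im} z=\pm\operatorname{Re}\lambda/k\neq 0$, Hardy delivers a bound independent of $\lambda$, and the remaining $k$-summation is absorbed by Lemma \ref{lemma-bound-phi-k}(ii), which supplies exactly the $\omega_\pm$-weight needed to soak up the extra factor from the change of variables.

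For (iii), the flatness hypothesis restricts the $v$-integration in (\ref{k}) to $|e_\pm|\geq\sigma$, where Lemma \ref{epestimate} yields $\omega_\pm(I_\pm)\gtrsim\sqrt{\sigma}$ uniformly in $\varepsilon\ll\sigma$. Hence for $|\lambda|<c\sqrt{\sigma}$ and every $k\neq 0$ one obtains the crucial lower bound
\[
\bigl|\omega_\pm\pm\lambda/(ik)\bigr|\geq\tfrac12\omega_\pm,
\]
so each factor $1/(\omega_\pm\pm\lambda/(ik))$ is holomorphic on a fixed disk around $\lambda=0$. Termwise differentiation in $\lambda$ of (\ref{k}) produces series with additional powers of this factor, and the uniform estimates from (ii), applied on nested disks, deliver operator-norm convergence; hence $K(\lambda,\varepsilon):H_0^1\to H_0^1$ is operator-analytic on a fixed small disk. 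I expect the hardest step to be the $\rho_2$ bound in (ii): applying Lemma \ref{lemma-hardy} forces one to transfer $H^1_x$-regularity of $\phi$ through the action-angle diffeomorphism into $W^{s,p}$-regularity in the $\omega_\pm$-variable, and the bookkeeping coupling the $k$-sum, the $I_\pm$-integration, and the $\lambda$-independence of the Hardy constant is where the work really lies.
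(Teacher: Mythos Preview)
Your treatment of (i) and (iii) matches the paper's: (i) is indeed just unpacking the construction, and (iii) hinges precisely on the lower bound $\omega_\pm\gtrsim\sqrt{\sigma}$ on the support of $\mu'_{\pm,\pm}$, after which termwise $\lambda$-differentiation of (\ref{k}) gives bounded operators and hence analyticity.

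Part (ii) is where your route diverges from the paper's, and the divergence creates a gap. Your splitting
\[
\frac{\omega_\pm}{\omega_\pm\pm\lambda/(ik)}=1\mp\frac{\lambda/(ik)}{\omega_\pm\pm\lambda/(ik)}
\]
puts an explicit factor $\lambda/(ik)$ in front of the $\rho_2$ integrand. Lemma~\ref{lemma-hardy} bounds $\int u(\omega)/(\omega-z)\,d\omega$ by $C\|u\|_{W^{s,p}}$ with $C$ independent of $z$, but here $u$ itself carries the factor $\lambda/(ik)$, so the resulting bound scales like $|\lambda|/|k|$ and is \emph{not} uniform as $|\lambda|\to\infty$ with $k$ fixed. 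Your assertion that ``Hardy delivers a bound independent of $\lambda$'' is therefore not correct as stated; to recover uniformity one has to write $\frac{\lambda/(ik)}{\omega\pm\lambda/(ik)}=1-\frac{\omega}{\omega\pm\lambda/(ik)}$ and reabsorb the $\omega$ into the numerator, which simply undoes the split and brings you back to the paper's direct approach.

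The paper avoids this entirely. It does not split, and it does not try to bound $\rho(\lambda,\varepsilon)\phi$ as a function. Instead it uses duality: for $\Phi=\partial_x^{-2}\rho(\lambda,\varepsilon)\phi$ one has $\|\Phi\|_{H^1}\approx\sup_{\|\psi\|_{L^2}=1}(\Phi_x,\psi)=-\,(\rho(\lambda,\varepsilon)\phi,\Psi)$ with $\Psi=\partial_x^{-1}\psi\in H^1$. The bilinear form then becomes
\[
(\rho(\lambda,\varepsilon)\phi,\Psi)=\sum_{k\neq 0,\pm}\int\frac{H_k^{\pm,\pm}(\omega_\pm)}{\omega_\pm\pm\lambda/(ik)}\,d\omega_\pm,\qquad
H_k^{\pm,+}(\omega_\pm)=\frac{\omega_\pm\mu'_{\pm,+}}{\omega_\pm'(I_\pm)}\,\phi_k^\pm\,\bar\Psi_k^\pm,
\]
and Lemma~\ref{lemma-hardy} is applied to this $H_k$ with the full numerator $\omega_\pm$ kept inside; the decay assumption (\ref{condition-decay}) guarantees $\omega_\pm\mu'_{\pm,\pm}\in L^2(d\omega_\pm)$. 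The payoff of the duality is that $H_k$ contains the product $\phi_k^\pm\bar\Psi_k^\pm$, so the pointwise bound (\ref{estimate-phi-k}) contributes $1/k^2$ rather than $1/k$, and the $H^1(d\omega_\pm)$-norm of $H_k$ (needed for Hardy) is then summable in $k$ via Cauchy--Schwarz and Lemma~\ref{lemma-bound-phi-k}(i). This is exactly the ``bookkeeping coupling the $k$-sum, the $I_\pm$-integration, and the $\lambda$-independence'' that you correctly anticipated as the hard part; the paper's mechanism for it is the duality pairing, which your proposal does not invoke.
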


\begin{proof}
(i) follows from the definition of the operator $K\left(  \lambda
,\varepsilon\right)  $. Indeed, $\lambda$ is an non-zero eigenvalue for the
Vlasov-Poisson operator satisfying (\ref{vlasoveigen}) (\ref{poissoneigen}),
if and only if (\ref{vlasovangle}), (\ref{g-}), (\ref{vlasovsolver+}) and
(\ref{vlasovsolver-}) are valid, which by the Poisson equation
(\ref{poissoneigen}) is equivalent to $\phi_{xx}=\rho(\lambda,\varepsilon
)\phi$ or $(I+\partial_{x}^{-2}\rho(\lambda,\varepsilon))\phi=0.$

Proof of (ii): By the change of variable from $\left[  0,P_{0}\right]  $ to
$\left[  0,P_{\beta}\right]  $, it is equivalent to show that the operator
\[
\partial_{x}^{-2}\rho(\lambda,\varepsilon):H_{\varepsilon}^{1}\rightarrow
H_{\varepsilon}^{1}%
\]
is uniformly bounded. For any $\phi\in H_{\varepsilon}^{1}$, let
$\Phi=\partial_{x}^{-2}\rho(\lambda,\varepsilon)\phi$. Since $\Phi\left(
x\right)  $ has zero mean, so
\[
\left\Vert \Phi\right\Vert _{H^{1}}\approx\left\Vert \Phi_{x}\right\Vert
_{L^{2}}=\sup_{\substack{\psi\in L^{2}\\\left\Vert \psi\right\Vert _{L^{2}}%
=1}}\left(  \Phi_{x},\psi\right)  .
\]
Let $\Psi=\partial_{x}^{-1}\psi$ be the anti-derivative of $\psi$ with zero
mean, then $\left\Vert \Psi\right\Vert _{H^{1}}\approx\left\Vert
\psi\right\Vert _{L^{2}}$. Let$\ $%
\[
\Psi(x)=\sum_{k}e^{ik\theta_{\pm}}\Psi_{k}^{\pm}(I_{\pm}),
\]
then
\begin{align}
&  \left(  \Phi_{x},\psi\right)  =-\left(  \Phi_{xx},\Psi\right)  =-\left(
\rho(\lambda,\varepsilon)\phi,\Psi\right)  \label{integral-L2-norm}\\
&  =-\sum_{k\neq0,\ \pm}\left(  \int\frac{\omega_{\pm}\mu_{\pm,+}^{\prime}%
}{\omega_{\pm}+\frac{\lambda}{ik}}\phi_{k}^{\pm}(I_{\pm})\bar{\Psi}_{k}^{\pm
}(I_{\pm})dI_{\pm}+\int\frac{\omega_{\pm}\mu_{\pm,-}^{\prime}}{\omega_{\pm
}-\frac{\lambda}{ik}}\phi_{k}^{\pm}\bar{\Psi}_{k}^{\pm}dI_{\pm}\right)
.\nonumber
\end{align}
To estimate the above integrals, we change the integration variable to
$\omega_{\pm}$ with $dI_{\pm}=\frac{1}{\omega_{\pm}^{\prime}\left(  I_{\pm
}\right)  }d\omega_{\pm}$. Define
\begin{equation}
H_{k}^{\pm,+}\left(  \omega_{\pm}\right)  =\omega_{\pm}\mu_{\pm,+}^{\prime
}\phi_{k}^{\pm}(I_{\pm})\bar{\Psi}_{k}^{\pm}(I_{\pm})\frac{1}{\omega_{\pm
}^{\prime}\left(  I_{\pm}\right)  }.\label{eqn-H-K}%
\end{equation}
Noting that $\mu_{\pm,\pm}^{\prime}\equiv0$ for $|e_{\pm}|\leq\sigma$, so
$H_{k}^{\pm,+}\left(  \omega_{\pm}\right)  =0$ for $\omega_{\pm}$ near $0$
(i.e. near separatrix). By zero extension, we can think of $H_{k}^{\pm
,+}\left(  \omega_{\pm}\right)  $ as a function defined in $\mathbf{R}%
$\textbf{. }Then the first integral in (\ref{integral-L2-norm}) can be
estimated by
\[
\left\vert \int_{\mathbf{R}}\frac{H_{k}^{\pm,+}\left(  \omega_{\pm}\right)
}{\omega_{\pm}+\frac{\lambda}{ik}}d\omega_{\pm}\right\vert \lesssim\left\Vert
H_{k}^{\pm,+}\right\Vert _{H^{1}\left(  \mathbf{R}\right)  },
\]
by Lemma \ref{lemma-hardy}. Since $\frac{\partial x}{\partial\theta}$ is
bounded \textit{outside }of separatrix, so
\begin{equation}
\left\vert \phi_{k}^{\pm}\left(  I_{\pm}\right)  \right\vert =\left\vert
\frac{1}{2\pi ki}\int_{0}^{2\pi}e^{-ik\theta_{\pm}}\phi^{\prime}%
(x)\frac{\partial x}{\partial\theta_{\pm}}d\theta_{\pm}\right\vert \leq
\frac{1}{2\pi k}||\phi||_{W^{1,1}}\lesssim\frac{1}{k}||\phi||_{H^{1}%
}.\label{estimate-phi-k}%
\end{equation}
Similarly, $\left\vert \bar{\Psi}_{k}^{\pm}\left(  I_{\pm}\right)  \right\vert
\lesssim\frac{1}{k}||\Psi||_{H^{1}}$. By assumption (\ref{condition-decay}),
\[
\int\sqrt{e_{\pm}}\left(  \mu_{\pm,+}^{\prime}\right)  ^{2}de_{\pm}<\infty,
\]
thus from
\[
\frac{d\omega_{\pm}}{de_{\pm}}=\frac{d\omega_{\pm}}{dI_{\pm}}\frac{dI_{\pm}%
}{de_{\pm}}=\frac{\omega_{\pm}^{\prime}}{\omega_{\pm}},
\]
and (\ref{expansion-w})
\[
\int\left(  \omega_{\pm}\mu_{\pm,+}^{\prime}\right)  ^{2}d\omega_{\pm}%
\lesssim\int\sqrt{e_{\pm}}\left(  \mu_{\pm,+}^{\prime}\right)  ^{2}de_{\pm
}<\infty
\]
Thanks to Lemma 8, we know that $\omega_{\pm}^{\prime}(I_{\pm})\backsim1$ when
$\mu_{\pm,\pm}^{\prime}\neq0$, since the supports of $\mu_{\pm,\pm}^{\prime}$
are outside the separatrix. So
\begin{align}
d\omega_{\pm}  & =\omega_{\pm}^{\prime}(I_{\pm})dI_{\pm}\backsim dI_{\pm
},\label{change}\\
\text{\ \ }\frac{d}{d\omega_{\pm}}  & =\frac{dI_{\pm}}{d\omega_{\pm}}\frac
{d}{dI_{\pm}}=\frac{1}{\omega_{\pm}^{\prime}(I_{\pm})}\frac{d}{dI_{\pm}%
}\backsim\frac{d}{dI_{\pm}},\nonumber
\end{align}
when $\mu_{\pm,\pm}^{\prime}\neq0$. Combining above, we get%
\[
\left\Vert H_{k}^{\pm,+}\right\Vert _{L^{2}\left(  \mathbf{R}\right)
}\lesssim\frac{1}{k^{2}}||\phi||_{H^{1}}||\Psi||_{H^{1}},
\]
where $H_{k}^{\pm,+}$ is defined in (\ref{eqn-H-K}). By using Lemma
\ref{epestimate} and (\ref{estimate-phi-k}), we have
\begin{align*}
&  \left\Vert \frac{d}{\omega_{\pm}}H_{k}^{\pm,+}\right\Vert _{L^{2}\left(
\mathbf{R}\right)  }\\
&  \lesssim\frac{1}{k}\left(  ||\phi||_{H^{1}}\left(  \int\left(  \omega_{\pm
}\mu_{\pm,+}^{\prime}\right)  ^{2}\left\vert \bar{\Psi}_{k}^{\pm\prime
}\right\vert ^{2}dI_{\pm}\right)  ^{\frac{1}{2}}+||\Psi||_{H^{1}}\left(
\int\left(  \omega_{\pm}\mu_{\pm,+}^{\prime}\right)  ^{2}\left\vert \phi
_{k}^{\pm\prime}\right\vert ^{2}dI_{\pm}\right)  ^{\frac{1}{2}}\right)  \\
&  \ \ \ \ \ \ \ +\frac{1}{k^{2}}||\phi||_{H^{1}}||\Psi||_{H^{1}}.
\end{align*}
Thus
\begin{align*}
&  \ \ \ \ \ \left\vert \sum_{k\neq0}\int_{\mathbf{R}}\frac{H_{k}^{\pm
,+}\left(  \omega_{\pm}\right)  }{\omega_{\pm}+\frac{\lambda}{ik}}d\omega
_{\pm}\right\vert \lesssim\sum_{k\neq0}\left\Vert H_{k}^{\pm,+}\right\Vert
_{H^{1}\left(  \mathbf{R}\right)  }\\
&  \lesssim\sum_{k\neq0}(\frac{1}{k}||\phi||_{H^{1}}\left(  \int\left(
\omega_{\pm}\mu_{\pm,+}^{\prime}\right)  ^{2}\left\vert \bar{\Psi}_{k}%
^{\pm\prime}\right\vert ^{2}dI_{\pm}\right)  ^{\frac{1}{2}}+\frac{1}{k}%
||\Psi||_{H^{1}}\left(  \int\left(  \omega_{\pm}\mu_{\pm,+}^{\prime}\right)
^{2}\left\vert \phi_{k}^{\pm\prime}\right\vert ^{2}dI_{\pm}\right)  ^{\frac
{1}{2}}\\
&  \ \ \ \ \ \ +\frac{1}{k^{2}}||\phi||_{H^{1}}||\Psi||_{H^{1}})\\
&  \lesssim||\phi||_{H^{1}}\left(  \sum_{k}\int\left(  \omega_{\pm}\mu_{\pm
,+}^{\prime}\right)  ^{2}\left\vert \bar{\Psi}_{k}^{\pm\prime}\right\vert
^{2}dI_{\pm}\right)  ^{\frac{1}{2}}+||\Psi||_{H^{1}}\left(  \sum_{k}%
\int\left(  \omega_{\pm}\mu_{\pm,+}^{\prime}\right)  ^{2}\left\vert \phi
_{k}^{\pm\prime}\right\vert ^{2}dI_{\pm}\right)  ^{\frac{1}{2}}\\
&  \ \ \ \ \ \ \ +||\phi||_{H^{1}}||\Psi||_{H^{1}}\\
&  \lesssim||\phi||_{H^{1}}||\Psi||_{H^{1}},
\end{align*}
by Lemma \ref{lemma-bound-phi-k}. The second term in (\ref{integral-L2-norm})
can be estimated in the same way. So we have
\[
\left\Vert \partial_{x}^{-2}\rho(\lambda,\varepsilon)\phi\right\Vert _{H^{1}%
}=\left\Vert \Phi\right\Vert _{H^{1}}\lesssim||\phi||_{H^{1}}.
\]

Proof of (iii): By (\ref{texpansion}), when $|e_{\pm}|\geq\sigma,$ we have
\[
\omega_{\pm}=2\pi/T_{\pm}(e_{\pm})\gtrsim\sqrt{\sigma}.
\]
So for $\left\vert \lambda\right\vert <<\sqrt{\sigma}$,
\begin{equation}
\left\vert \frac{\lambda}{ik}+\omega_{\pm}\right\vert \gtrsim\sqrt{\sigma
},\ \text{uniformly for any }|k|\geq1\text{.}\label{bound-omega}%
\end{equation}
Hence, the integrals in (\ref{integral-L2-norm}) are clearly bounded by
$||\phi||_{H^{1}}||\Psi||_{H^{1}}$. We further take complex$\ \lambda$
derivatives of (\ref{integral-L2-norm}) to get
\begin{align*}
&  \ \ \ \ \ (\partial_{\lambda}\left(  \rho(\lambda,\varepsilon)\phi\right)
,\Psi)\\
&  =-\sum_{k\neq0,\ \pm}\left(  -\int\frac{\omega_{\pm}\mu_{\pm,+}^{\prime}%
}{ik(\omega_{\pm}+\frac{\lambda}{ik})^{2}}\phi_{k}^{\pm}(I_{\pm})\bar{\Psi
}_{k}^{\pm}(I_{\pm})dI_{\pm}+\int\frac{\omega_{\pm}\mu_{\pm,-}^{\prime}%
}{\left(  \omega_{\pm}-\frac{\lambda}{ik}\right)  ^{2}}\phi_{k}^{\pm}\bar
{\Psi}_{k}^{\pm}dI_{\pm}\right)  ,
\end{align*}
which by (\ref{bound-omega}) again is bounded by $||\phi||_{H^{1}}%
||\Psi||_{H^{1}}$. This shows that $\partial_{\lambda}K\left(  \lambda
,\varepsilon\right)  $ is bounded operator in $H_{0}^{1}$ and thus $K\left(
\lambda,\varepsilon\right)  $ is an analytic operator in $H_{0}^{1}\ $when
$|\lambda|<<1$.
\end{proof}

\begin{proposition}
\label{prop-operator-difference}For $\varepsilon<<\sigma$ and any $\lambda
\in\mathbf{C}$ with $\operatorname{Re}\lambda>0\mathbf{,\,}$\
\begin{equation}
||K(\lambda,\varepsilon)-K(\lambda,0)||_{L(H_{0}^{1},H_{0}^{1})}\leq
C\sqrt{\varepsilon}, \label{estimate-difference-operator}%
\end{equation}
where $C$ is a positive constant independent of $\lambda$.
\end{proposition}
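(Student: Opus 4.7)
The plan is to compare the kernel of $\rho(\lambda,\varepsilon)$ with the kernel of $\rho(\lambda,0)$ term by term, using that for the homogeneous state $(\varepsilon=0)$ the action--angle variables trivialize to $\omega_\pm = \tfrac{2\pi}{P_0}|v|$, $\theta_\pm = \tfrac{2\pi}{P_0}x$ with Jacobian $1$. Throughout, Lemma~\ref{epestimate} supplies $O(\varepsilon)$ pointwise estimates for the differences of all the relevant geometric quantities on the support of $\mu'_{\pm,\pm}$, while the flatness assumption ensures these supports stay bounded away from the separatrix.

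First, I would dualize: by the argument in the proof of Lemma~\ref{Lemma-operator-analytic}(ii), controlling $\|K(\lambda,\varepsilon)-K(\lambda,0)\|_{H_0^1\to H_0^1}$ reduces to estimating
$\bigl|((\rho(\lambda,\varepsilon)-\rho(\lambda,0))\phi,\Psi)\bigr|$
for $\phi\in H_0^1$ and $\Psi=\partial_x^{-1}\psi$ with $\|\phi\|_{H^1}=\|\psi\|_{L^2}=1$. The rescaling $G_\beta$ is harmless: by (\ref{leading-order-beta}) we have $P_\beta-P_0=O(\varepsilon^2)$, so $\|G_\beta-I\|_{L(H_0^1,H_\varepsilon^1)}=O(\varepsilon^2)$ and this piece contributes a lower-order error absorbed into the final bound.

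Next, I would apply the formula (\ref{integral-L2-norm}) to both $\varepsilon>0$ and $\varepsilon=0$, and switch the integration variable from $I_\pm$ to $\omega_\pm$ using $dI_\pm=d\omega_\pm/\omega_\pm'(I_\pm)$, in both cases. After this common change of variable, each term on the right of (\ref{integral-L2-norm}) has the form
\[
\int_{\mathbf{R}}\frac{H_k^{\pm,\pm}(\omega_\pm;\varepsilon)}{\omega_\pm\pm\lambda/(ik)}\,d\omega_\pm,
\]
with $H_k^{\pm,\pm}(\cdot;\varepsilon)$ supported in $|\omega_\pm|\gtrsim\sqrt{\sigma}$ by the flatness of $\mu_{\pm,\pm}'$. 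Subtracting the $\varepsilon=0$ version and applying Lemma~\ref{lemma-hardy} bounds each term in the difference by $\|H_k^{\pm,\pm}(\cdot;\varepsilon)-H_k^{\pm,\pm}(\cdot;0)\|_{H^1(\mathbf{R})}$, uniformly in $\lambda$ with $\operatorname{Re}\lambda>0$.

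The core task is then to bound $\sum_{k\neq 0}\|H_k^{\pm,\pm}(\cdot;\varepsilon)-H_k^{\pm,\pm}(\cdot;0)\|_{H^1(\mathbf{R})}$ by $C\sqrt{\varepsilon}$. The kernel $H_k^{\pm,\pm}$ is a product of the factors $\omega_\pm$, $\mu'_{\pm,\pm}(e_\pm)$, $\phi_k^\pm(I_\pm)$, $\bar\Psi_k^\pm(I_\pm)$, and $1/\omega_\pm'(I_\pm)$. I would telescope the difference of these products, writing it as a sum in which exactly one factor is replaced by (factor at $\varepsilon$) minus (factor at $0$), and the remaining factors are bounded using the estimates in the proof of Lemma~\ref{Lemma-operator-analytic}(ii). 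For each single-factor difference, Lemma~\ref{epestimate} gives an $O(\varepsilon)$ pointwise bound on the support of $\mu'_{\pm,\pm}$; the Fourier coefficient comparison $\phi_k^\pm(I_\pm)-\phi_k^0(v)=O(\varepsilon)$ follows from $|\theta_\pm-\tfrac{2\pi}{P_\beta}x|=O(\varepsilon)$, and analogously for $\Psi_k^\pm$. Summability in $k$ is obtained via Lemma~\ref{lemma-bound-phi-k} and the Cauchy--Schwarz/Bessel-type bounds already used in Lemma~\ref{Lemma-operator-analytic}(ii).

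The main obstacle I anticipate is this last step: organising the telescoping decomposition so that the $H^1(\mathbf{R})$ norm controls each summand uniformly, and extracting the $\sqrt{\varepsilon}$ rate. The square-root (rather than full $\varepsilon$) naturally arises when one factor of $\varepsilon$ from a pointwise difference is paired, via Cauchy--Schwarz, with a bounded-but-$\lambda$-sensitive $L^2$ norm of the remaining product — so that balancing $O(\varepsilon)$ pointwise against an $L^2$ bound of order $1$ produces $O(\sqrt{\varepsilon})$ rather than $O(\varepsilon)$. Making this balance precise, and ensuring uniformity in $k$ and in $\lambda$ with $\operatorname{Re}\lambda>0$, is the principal technical burden.
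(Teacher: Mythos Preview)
Your overall strategy matches the paper's: dualize, change variable to $\omega_\pm$, apply Lemma~\ref{lemma-hardy}, telescope the product, and sum in $k$ via Lemma~\ref{lemma-bound-phi-k}. Two points need correction.

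First, the claim $P_\beta-P_0=O(\varepsilon^2)$ is not justified by \eqref{leading-order-beta}; in fact $P_\beta'(0)$ is computed in the proof of Theorem~\ref{thm-main-even} and is generically nonzero. The paper avoids the rescaling entirely by comparing $\partial_x^{-2}\rho(\lambda,\varepsilon)$ to the homogeneous density operator \emph{at period $P_\beta$}, i.e.\ with action--angle $\omega_\pm=\tfrac{2\pi}{P_\beta}|v|$, $\theta_\pm=\tfrac{2\pi}{P_\beta}x$; both then live on $H_\varepsilon^1$ and no $G_\beta$ error arises.

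Second, and more importantly, you have misidentified the source of the $\sqrt{\varepsilon}$. Your claim that $\phi_k^{\pm,\varepsilon}(I_\pm)-\phi_k^{\pm,0}=O(\varepsilon)$ is false for general $\phi\in H^1$: writing
\[
\phi_k^{\pm,\varepsilon}(I_\pm)-\phi_k^{\pm,0}=\frac{1}{2\pi}\int_0^{2\pi}e^{-ik\theta_\pm}\Bigl[\int_{\frac{P_\beta}{2\pi}\theta_\pm}^{x(I_\pm,\theta_\pm)}\phi_x(\zeta)\,d\zeta\Bigr]d\theta_\pm,
\]
and using $|x-\tfrac{P_\beta}{2\pi}\theta_\pm|\lesssim\varepsilon$ from Lemma~\ref{epestimate}, Cauchy--Schwarz on the inner integral gives only $|\phi_k^{\pm,\varepsilon}-\phi_k^{\pm,0}|\leq\sqrt{\varepsilon}\,\|\phi_x\|_{L^2}$ (and likewise for $\Psi$). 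This is precisely where the $\sqrt{\varepsilon}$ enters; your heuristic about ``balancing $O(\varepsilon)$ pointwise against an $L^2$ bound of order $1$'' does not produce a square root and is not the mechanism. Once this bound (and its $L^2_{dI_\pm}$ analogue, yielding $\varepsilon\|\phi\|_{H^1}^2$ after summing in $k$) is in place, the telescoping and summation proceed as you outline and as in the paper.
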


\begin{proof}
It is equivalent to show that
\[
\left\Vert \partial_{x}^{-2}\rho(\lambda,\varepsilon)-\partial_{x}^{-2}%
\rho(\lambda,0)\right\Vert _{L(H_{\varepsilon}^{1},H_{\varepsilon}^{1})}\leq
C\sqrt{\varepsilon},
\]
where $C>0$ is independent of $\lambda$. Let $\phi,\Psi\in H_{\varepsilon}%
^{1}$, then it suffices to show that
\[
\left\vert \left(  \left(  \rho(\lambda,\varepsilon)-\rho(\lambda,0)\right)
\phi,\Psi\right)  \right\vert \leq C\sqrt{\varepsilon}||\phi||_{H^{1}}%
||\Psi||_{H^{1}},
\]
for some constant $C$ independent of $\lambda$. 

The proof is split into three steps. 

\textit{Step 1}. \textit{Representation. }We note that for the homogeneous
case $\left(  \varepsilon=0\right)  \ $with period $P_{\beta}$, the
action-angle variables become
\[
\omega_{\pm}=\frac{2\pi}{P_{\beta}}|v|,\ I_{\pm}=\frac{P_{\beta}}{2\pi
}\left\vert v\right\vert ,\ \theta_{\pm}=\frac{2\pi}{P_{\beta}}x.
\]
Then
\begin{align*}
&  \ \ \ \ \left(  \rho(\lambda,0)\phi,\Psi\right)  \\
&  =\sum_{k\neq0,\ \pm}\left(  \int\frac{\omega_{\pm}\mu_{\pm,+}^{\prime}%
}{\omega_{\pm}+\frac{\lambda}{ik}}\phi_{k}^{\pm,0}\bar{\Psi}_{k}^{\pm
,0}dI_{\pm}+\int\frac{\omega_{\pm}\mu_{\pm,-}^{\prime}}{\omega_{\pm}%
-\frac{\lambda}{ik}}\phi_{k}^{\pm,0}\bar{\Psi}_{k}^{\pm,0}dI_{\pm}\right)  \\
&  =\sum_{k\neq0,\ \pm}\left(  \frac{P_{\beta}}{2\pi}\right)  ^{2}\left(
\int\frac{\omega_{\pm}\mu_{\pm,+}^{\prime}}{\omega_{\pm}+\frac{\lambda}{ik}%
}\phi_{k}^{\pm,0}\bar{\Psi}_{k}^{\pm,0}d\omega_{\pm}+\int\frac{\omega_{\pm}%
\mu_{\pm,-}^{\prime}}{\omega_{\pm}-\frac{\lambda}{ik}}\phi_{k}^{\pm,0}%
\bar{\Psi}_{k}^{\pm,0}d\omega_{\pm}\right)  \\
&  =\sum_{k\neq0,\ \pm}\int\frac{H_{k}^{\pm,+;0}}{\omega_{\pm}+\frac{\lambda
}{ik}}d\omega_{\pm}+\int\frac{H_{k}^{\pm,-;0}}{\omega_{\pm}-\frac{\lambda}%
{ik}}d\omega_{\pm},
\end{align*}
where%
\[
\mu_{\pm,\pm}^{\prime}=\mu_{\pm,\pm}^{\prime}\left(  \frac{1}{2}\left(
\frac{P_{\beta}}{2\pi}\right)  ^{2}\omega_{\pm}^{2}\right)  =\mu_{\pm,\pm
}^{\prime}\left(  \frac{1}{2}v^{2}\right)  ,
\]%
\begin{align*}
\phi_{k}^{\pm,0} &  \equiv\frac{1}{2\pi}\int_{0}^{2\pi}e^{-ik\theta_{\pm}}%
\phi(\frac{P_{\beta}}{2\pi}\theta_{\pm})d\theta_{\pm},\ \\
\Psi_{k}^{\pm,0} &  \equiv\frac{1}{2\pi}\int_{0}^{2\pi}e^{-ik\theta_{\pm}}%
\Psi(\frac{P_{\beta}}{2\pi}\theta_{\pm})d\theta_{\pm},
\end{align*}
and
\begin{align*}
H_{k}^{\pm,+;0}\left(  \omega_{\pm}\right)   &  =\left(  \frac{P_{\beta}}%
{2\pi}\right)  ^{2}\omega_{\pm}\mu_{\pm,+}^{\prime}\left(  \frac{1}{2}\left(
\frac{P_{\beta}}{2\pi}\right)  ^{2}\omega_{\pm}^{2}\right)  \phi_{k}^{\pm
,0}\bar{\Psi}_{k}^{\pm,0},\\
H_{k}^{\pm,-;0}\left(  \omega_{\pm}\right)   &  =\left(  \frac{P_{\beta}}%
{2\pi}\right)  ^{2}\omega_{\pm}\mu_{\pm,-}^{\prime}\left(  \frac{1}{2}\left(
\frac{P_{\beta}}{2\pi}\right)  ^{2}\omega_{\pm}^{2}\right)  \phi_{k}^{\pm
,0}\bar{\Psi}_{k}^{\pm,0}.
\end{align*}
In the case $\varepsilon>0$, we use the same notations $\left(  I_{\pm
},\ \theta_{\pm}\right)  $ and $\omega_{\pm}\ $for action-angle variables and
frequency, to make it convient to estimate the difference $\left(  \left(
\rho(\lambda,\varepsilon)-\rho(\lambda,0)\right)  \phi,\Psi\right)  $. Then as
in the proof of Lemma \ref{Lemma-operator-analytic}, we can write
\[
\left(  \rho(\lambda,\varepsilon)\phi,\Psi\right)  =\sum_{k\neq0,\ \pm}%
\int\frac{H_{k}^{\pm,+;\varepsilon}\left(  \omega_{\pm}\right)  }{\omega_{\pm
}+\frac{\lambda}{ik}}d\omega_{\pm}+\int\frac{H_{k}^{\pm,-;\varepsilon}\left(
\omega_{\pm}\right)  }{\omega_{\pm}-\frac{\lambda}{ik}}d\omega_{\pm},
\]
where%
\begin{align*}
H_{k}^{\pm,+;\varepsilon}\left(  \omega_{\pm}\right)   &  =\omega_{\pm}%
\mu_{\pm,+}^{\prime}\left(  e_{\pm}\right)  \phi_{k}^{\pm,\varepsilon}(I_{\pm
})\bar{\Psi}_{k}^{\pm,\varepsilon}(I_{\pm})\frac{1}{\omega_{\pm}^{\prime
}\left(  I_{\pm}\right)  },\\
H_{k}^{\pm,-;\varepsilon}\left(  \omega_{\pm}\right)   &  =\omega_{\pm}%
\mu_{\pm,-}^{\prime}\left(  e_{\pm}\right)  \phi_{k}^{\pm,\varepsilon}(I_{\pm
})\bar{\Psi}_{k}^{\pm,\varepsilon}(I_{\pm})\frac{1}{\omega_{\pm}^{\prime
}\left(  I_{\pm}\right)  },
\end{align*}
with%
\begin{align*}
\phi_{k}^{\pm,\varepsilon}(I_{\pm}) &  =\frac{1}{2\pi}\int_{0}^{2\pi}%
\phi(x\left(  I_{\pm},\theta_{\pm}\right)  )e^{-ik\theta_{\pm}}d\theta_{\pm
},\\
\Psi_{k}^{\pm,\varepsilon}(I_{\pm}) &  =\frac{1}{2\pi}\int_{0}^{2\pi}%
\Psi(x\left(  I_{\pm},\theta_{\pm}\right)  )e^{-ik\theta_{\pm}}d\theta_{\pm}.
\end{align*}
Here, in the above formula for $\phi_{k}^{\pm,\varepsilon},\Psi_{k}%
^{\pm,\varepsilon}$, we use $\left(  x\left(  I_{\pm},\theta_{\pm}\right)
,v\left(  I_{\pm},\theta_{\pm}\right)  \right)  $ to denote the action-angle
transform $\left(  I_{\pm},\theta_{\pm}\right)  \rightarrow\left(  x,v\right)
$ in the case $\varepsilon>0$. Defining
\begin{align}
G_{k}^{\pm,+}\left(  \omega_{\pm}\right)   &  =H_{k}^{\pm,+;\varepsilon
}\left(  \omega_{\pm}\right)  -H_{k}^{\pm,+;0}\left(  \omega_{\pm}\right)
,\label{defn-G-k}\\
G_{k}^{\pm,-}\left(  \omega_{\pm}\right)   &  =H_{k}^{\pm,-;\varepsilon
}\left(  \omega_{\pm}\right)  -H_{k}^{\pm,-;0}\left(  \omega_{\pm}\right)
,\nonumber
\end{align}
then we get
\[
\left(  \left(  \rho(\lambda,\varepsilon)-\rho(\lambda,0)\right)  \phi
,\Psi\right)  =\sum_{k\neq0,\ \pm}\int\frac{G_{k}^{\pm,+}\left(  \omega_{\pm
}\right)  }{\omega_{\pm}+\frac{\lambda}{ik}}d\omega_{\pm}+\int\frac{G_{k}%
^{\pm,-}\left(  \omega_{\pm}\right)  }{\omega_{\pm}-\frac{\lambda}{ik}}%
d\omega_{\pm}.
\]
By Lemma \ref{lemma-hardy}, the proof is reduced to estimate $\left\Vert
G_{k}^{\pm,\pm}\right\Vert _{H^{1}}$. We write%
\begin{align}
G_{k}^{\pm,+}\left(  \omega_{\pm}\right)   &  =\omega_{\pm}\left(  \mu_{\pm
,+}^{\prime}\left(  e_{\pm}\right)  -\mu_{\pm,+}^{\prime}\left(  \frac{1}%
{2}\left(  \frac{P_{\beta}}{2\pi}\right)  ^{2}\omega_{\pm}^{2}\right)
\right)  \phi_{k}^{\pm,\varepsilon}(I_{\pm})\bar{\Psi}_{k}^{\pm,\varepsilon
}(I_{\pm})\frac{1}{\omega_{\pm}^{\prime}\left(  I_{\pm}\right)  }%
\label{defn-G-K-j}\\
&  \ \ \ \ \ +\omega_{\pm}\mu_{\pm,+}^{\prime}\left(  \frac{1}{2}\left(
\frac{P_{\beta}}{2\pi}\right)  ^{2}\omega_{\pm}^{2}\right)  \left(  \frac
{1}{\omega_{\pm}^{\prime}\left(  I_{\pm}\right)  }-\left(  \frac{P_{\beta}%
}{2\pi}\right)  ^{2}\right)  \phi_{k}^{\pm,\varepsilon}(I_{\pm})\bar{\Psi}%
_{k}^{\pm,\varepsilon}(I_{\pm})\nonumber\\
&  \ \ \ \ \ +\omega_{\pm}\mu_{\pm,+}^{\prime}\left(  \frac{1}{2}\left(
\frac{P_{\beta}}{2\pi}\right)  ^{2}\omega_{\pm}^{2}\right)  \left(
\frac{P_{\beta}}{2\pi}\right)  ^{2}\left(  \phi_{k}^{\pm,\varepsilon}(I_{\pm
})-\phi_{k}^{\pm,0}\right)  \bar{\Psi}_{k}^{\pm,\varepsilon}(I_{\pm
})\nonumber\\
&  \ \ \ \ \ +\omega_{\pm}\mu_{\pm,+}^{\prime}\left(  \frac{1}{2}\left(
\frac{P_{\beta}}{2\pi}\right)  ^{2}\omega_{\pm}^{2}\right)  \left(
\frac{P_{\beta}}{2\pi}\right)  ^{2}\phi_{k}^{\pm,0}\left(  \bar{\Psi}_{k}%
^{\pm,\varepsilon}(I_{\pm})-\bar{\Psi}_{k}^{\pm,0}\right)  \nonumber\\
&  =\sum_{j=1}^{4}G_{k}^{\pm,+;j}\left(  \omega_{\pm}\right)  .\nonumber
\end{align}

\textit{Step 2.} \textit{Estimates for }$\phi_{k}^{\pm,\varepsilon}-\phi
_{k}^{\pm,0}$ \textit{and }$\psi_{k}^{\pm,\varepsilon}-\psi_{k}^{\pm,0}$
\textit{when }$|e_{\pm}|\geq\frac{\sigma}{2}.$

We note that
\begin{align}
\phi_{k}^{\pm,\varepsilon}(I_{\pm})-\phi_{k}^{\pm,0} &  =\frac{1}{2\pi}%
\int_{0}^{2\pi}e^{-ik\theta_{\pm}}[\phi(x\left(  I_{\pm},\theta_{\pm}\right)
)-\phi(\frac{P_{\beta}}{2\pi}\theta_{\pm})]d\theta_{\pm}%
\label{expansion-phi-difference}\\
&  =\frac{1}{2\pi}\int_{0}^{2\pi}e^{-ik\theta_{\pm}}\left[  \int%
_{\frac{P_{\beta}}{2\pi}\theta_{\pm}}^{x\left(  I_{\pm},\theta_{\pm}\right)
}\phi_{x}(\zeta)d\zeta\right]  d\theta_{\pm},\nonumber
\end{align}
and $\left\vert x-\frac{P_{\beta}}{2\pi}\theta_{\pm}\right\vert \lesssim
\varepsilon$ by Lemma \ref{epestimate}, so
\begin{equation}
\left\vert \phi_{k}^{\pm,\varepsilon}(I_{\pm})-\phi_{k}^{\pm,0}\right\vert
\leq\max_{\theta_{\pm}}\left\vert \int_{\frac{P_{\beta}}{2\pi}\theta_{\pm}%
}^{x\left(  I_{\pm},\theta_{\pm}\right)  }\phi_{x}(\zeta)d\zeta\right\vert
\leq\min\left\{  \sqrt{\varepsilon}||\phi||_{H^{1}},\varepsilon||\phi
_{x}||_{\infty}\right\}  ,\label{maximum-phi-difference}%
\end{equation}
and
\begin{align}
&  \ \ \ \ \sum_{k}\int\left(  \omega_{\pm}\mu_{\pm,+}^{\prime}\right)
^{2}\left\vert \phi_{k}^{\pm,\varepsilon}(I_{\pm})-\phi_{k}^{\pm,0}\right\vert
^{2}dI_{\pm}\label{estimate-phi-difference-L^2}\\
&  =\int\int\left(  \omega_{\pm}\mu_{\pm,+}^{\prime}\right)  ^{2}\left\vert
\int_{\frac{P_{\beta}}{2\pi}\theta_{\pm}\left(  x,v\right)  }^{x}\phi
_{x}(\zeta)d\zeta\right\vert ^{2}dxdv\nonumber\\
&  \leq\min\left\{  \varepsilon||\phi||_{H^{1}}^{2},\varepsilon^{2}||\phi
_{x}||_{\infty}^{2}\right\}  \int\int\left(  \omega_{\pm}\mu_{\pm,+}^{\prime
}\right)  ^{2}dxdv\nonumber\\
&  \lesssim\min\left\{  \varepsilon||\phi||_{H^{1}}^{2},\varepsilon^{2}%
||\phi_{x}||_{\infty}^{2}\right\}  .\nonumber
\end{align}
Similarly, we have
\[
\Psi_{k}^{\pm,\varepsilon}(I_{\pm})-\Psi_{k}^{\pm,0}=\frac{1}{2\pi}\int%
_{0}^{2\pi}e^{-ik\theta_{\pm}}\left[  \int_{\frac{P_{\beta}}{2\pi}\theta_{\pm
}}^{x\left(  I_{\pm},\theta_{\pm}\right)  }\Psi_{x}(\zeta)d\zeta\right]
d\theta_{\pm},
\]%
\begin{equation}
\left\vert \Psi_{k}^{\pm,\varepsilon}(I_{\pm})-\Psi_{k}^{\pm,0}\right\vert
\leq\min\left\{  \sqrt{\varepsilon}\left\Vert \Psi\right\Vert _{H^{1}%
},\varepsilon\left\Vert \Psi_{x}\right\Vert _{\infty}\right\}
,\label{maximum-psi-difference}%
\end{equation}
and
\begin{equation}
\sum_{k}\int\left(  \omega_{\pm}\mu_{\pm,+}^{\prime}\right)  ^{2}\left\vert
\Psi_{k}^{\pm,\varepsilon}(I_{\pm})-\Psi_{k}^{\pm,0}\right\vert ^{2}dI_{\pm
}\lesssim\min\left\{  \varepsilon\left\Vert \Psi\right\Vert _{H^{1}}%
^{2},\varepsilon^{2}\left\Vert \Psi_{x}\right\Vert _{\infty}^{2}\right\}
.\label{estimate-psi-difference-L^2}%
\end{equation}
This completes Step 2.

\textit{Step 3.} \textit{Estimate for }$||G^{\pm,\pm}||_{H^{1}}.$

Here, the function $G^{\pm,\pm}$ is defined in (\ref{defn-G-k}). We will
estimate $||G^{\pm,+}||_{H^{1}}$ and it is similar for $||G^{\pm,-}||_{H^{1}}%
$. we have
\[
e_{\pm}-\frac{1}{2}\left(  \frac{P_{\beta}}{2\pi}\right)  ^{2}\omega_{\pm}%
^{2}=O(\varepsilon),
\]
and thus by (\ref{condition-decay})
\[
\left\vert \mu_{\pm,+}^{\prime}\left(  e_{\pm}\right)  -\mu_{\pm,+}^{\prime
}\left(  \frac{1}{2}\left(  \frac{P_{\beta}}{2\pi}\right)  ^{2}\omega_{\pm
}^{2}\right)  \right\vert \leq C\varepsilon\left(  1+\omega_{\pm}^{2}\right)
^{-\gamma},\ \gamma>1.
\]
We recall that $G^{\pm,+}=\sum_{j=1}^{4}G^{\pm,+;j}$ (see (\ref{defn-G-K-j}))
in the following estimates, where we also use (\ref{change}) to transform the
integrals $\int\cdots d\omega_{\pm}$ to $\int\cdots dI_{\pm}$. By
(\ref{estimate-phi-k}),
\[
\left\Vert G_{k}^{\pm,+;1}\right\Vert _{L^{2}}\lesssim\frac{\varepsilon}%
{k^{2}}||\phi||_{H^{1}}||\Psi||_{H^{1}}.
\]
By Lemma \ref{epestimate} $\left\vert \frac{1}{\omega_{\pm}^{\prime}\left(
I_{\pm}\right)  }-\left(  \frac{P_{\beta}}{2\pi}\right)  ^{2}\right\vert
\lesssim\varepsilon$, so
\[
\left\Vert G_{k}^{\pm,+;2}\right\Vert _{L^{2}}\lesssim\frac{\varepsilon}%
{k^{2}}||\phi||_{H^{1}}||\Psi||_{H^{1}}.
\]
It is strightforward to get
\[
\left\Vert G_{k}^{\pm,+;3}\right\Vert _{L^{2}}\lesssim\frac{1}{k}%
||\Psi||_{H^{1}}\left(  \int\left(  \omega_{\pm}\mu_{\pm,+}^{\prime}\right)
^{2}\left\vert \phi_{k}^{\pm,\varepsilon}(I_{\pm})-\phi_{k}^{\pm,0}\right\vert
^{2}dI_{\pm}\right)  ^{\frac{1}{2}},
\]
and
\[
\left\Vert G_{k}^{\pm,+;4}\right\Vert _{L^{2}}\lesssim\frac{1}{k}%
||\phi||_{H^{1}}\left(  \int\left(  \omega_{\pm}\mu_{\pm,+}^{\prime}\right)
^{2}\left\vert \Psi_{k}^{\pm,\varepsilon}(I_{\pm})-\Psi_{k}^{\pm,0}\right\vert
^{2}dI_{\pm}\right)  ^{\frac{1}{2}}.
\]
Now we estimate $\left\Vert \frac{d}{d\omega_{\pm}}G_{k}^{\pm,+}\left(
\omega_{\pm}\right)  \right\Vert _{L^{2}}$. We note that
\begin{align*}
&  \ \ \ \left\vert \frac{d}{d\omega_{\pm}}\left[  \mu_{\pm,+}^{\prime}\left(
e_{\pm}\right)  -\mu_{\pm,+}^{\prime}\left(  \frac{1}{2}\left(  \frac
{P_{\beta}}{2\pi}\right)  ^{2}\omega_{\pm}^{2}\right)  \right]  \right\vert \\
&  =\left\vert \mu_{\pm,+}^{\prime\prime}\left(  e_{\pm}\right)  \frac
{\omega_{\pm}}{\omega_{\pm}^{\prime}}-\mu_{\pm,+}^{\prime\prime}\left(
\frac{1}{2}\left(  \frac{P_{\beta}}{2\pi}\right)  ^{2}\omega_{\pm}^{2}\right)
\left(  \frac{P_{\beta}}{2\pi}\right)  ^{2}\omega_{\pm}\right\vert \\
&  \lesssim\varepsilon\omega_{\pm}\left(  1+\omega_{\pm}^{2}\right)
^{-\gamma},
\end{align*}
by (\ref{condition-decay}) and Lemma \ref{epestimate}. So we can get
\begin{align*}
&  \ \ \ \ \left\Vert \frac{d}{d\omega_{\pm}}G_{k}^{\pm,+;1}\left(
\omega_{\pm}\right)  \right\Vert _{L^{2}}\\
&  \lesssim\frac{\varepsilon}{k^{2}}||\phi||_{H^{1}}||\Psi||_{H^{1}}+\frac
{1}{k}||\phi||_{H^{1}}\left(  \int\left(  \omega_{\pm}\mu_{\pm,+}^{\prime
}\right)  ^{2}\left\vert \left(  \Psi_{k}^{\pm,\varepsilon}\right)  ^{\prime
}(I_{\pm})\right\vert ^{2}dI_{\pm}\right)  ^{\frac{1}{2}}\\
&  \ \ \ \ \ +\frac{1}{k}||\Psi||_{H^{1}}\left(  \int\left(  \omega_{\pm}%
\mu_{\pm,+}^{\prime}\right)  ^{2}\left\vert \left(  \phi_{k}^{\pm,\varepsilon
}\right)  ^{\prime}(I_{\pm})\right\vert ^{2}dI_{\pm}\right)  ^{\frac{1}{2}}.
\end{align*}
For $\left\Vert \frac{d}{d\omega_{\pm}}G_{k}^{\pm,+;2}\left(  \omega_{\pm
}\right)  \right\Vert _{L^{2}},$ the above estimate also holds true by using
\[
\left\vert \frac{d}{dI_{\pm}}\left(  \frac{1}{\omega_{\pm}^{\prime}(I_{\pm}%
)}\right)  \right\vert \lesssim\varepsilon.
\]
By using (\ref{maximum-phi-difference}) and (\ref{maximum-psi-difference}), we
can get that
\begin{align*}
&  \ \ \left\Vert \frac{d}{d\omega_{\pm}}G_{k}^{\pm,+;3}\left(  \omega_{\pm
}\right)  \right\Vert _{L^{2}}+\left\Vert \frac{d}{d\omega_{\pm}}G_{k}%
^{\pm,+;4}\left(  \omega_{\pm}\right)  \right\Vert _{L^{2}}\\
&  \lesssim\frac{1}{k}||\phi||_{H^{1}}\left(  \int\left(  \omega_{\pm}\mu
_{\pm,+}^{\prime}\right)  ^{2}\left\vert \left(  \Psi_{k}^{\pm,\varepsilon
}\right)  ^{\prime}(I_{\pm})\right\vert ^{2}dI_{\pm}\right)  ^{\frac{1}{2}}\\
&  \ \ \ \ \ \ +\frac{1}{k}||\Psi||_{H^{1}}\left(  \int\left(  \omega_{\pm}%
\mu_{\pm,+}^{\prime}\right)  ^{2}\left\vert \left(  \phi_{k}^{\pm,\varepsilon
}\right)  ^{\prime}(I_{\pm})\right\vert ^{2}dI_{\pm}\right)  ^{\frac{1}{2}}\\
&  \ \ \ \ \ \ +\min\left\{  \sqrt{\varepsilon}||\Psi||_{H^{1}},\varepsilon
||\Psi_{x}||_{\infty}\right\}  \left(  \int\left(  \omega_{\pm}^{2}\mu_{\pm
,+}^{\prime\prime}\right)  ^{2}\left\vert \phi_{k}^{\pm,\varepsilon
}\right\vert ^{2}dI_{\pm}\right)  ^{\frac{1}{2}}\\
&  \ \ \ \ \ \ +\min\left\{  \sqrt{\varepsilon}||\phi||_{H^{1}},\varepsilon
||\phi_{x}||_{\infty}\right\}  \left(  \int\left(  \omega_{\pm}^{2}\mu_{\pm
,+}^{\prime\prime}\right)  ^{2}\left\vert \Psi_{k}^{\pm,\varepsilon
}\right\vert ^{2}dI_{\pm}\right)  ^{\frac{1}{2}}.
\end{align*}
Combining above, we have
\begin{align*}
\sum_{k}\left\Vert G_{k}^{\pm,+}\right\Vert _{H^{1}} &  \lesssim\sum
_{k}\{\frac{\varepsilon}{k^{2}}||\phi||_{H^{1}}||\Psi||_{H^{1}}+\frac{1}%
{k}||\Psi||_{H^{1}}\left(  \int\left(  \omega_{\pm}\mu_{\pm,+}^{\prime
}\right)  ^{2}\left\vert \phi_{k}^{\pm,\varepsilon}(I_{\pm})-\phi_{k}^{\pm
,0}\right\vert ^{2}dI_{\pm}\right)  ^{\frac{1}{2}}\\
&  \ \ \ \ +\frac{1}{k}||\phi||_{H^{1}}\left(  \int\left(  \omega_{\pm}%
\mu_{\pm,+}^{\prime}\right)  ^{2}\left\vert \Psi_{k}^{\pm,\varepsilon}(I_{\pm
})-\Psi_{k}^{\pm,0}\right\vert ^{2}dI_{\pm}\right)  ^{\frac{1}{2}}\\
&  \ \ \ \ +\frac{1}{k}||\phi||_{H^{1}}\left(  \int\left(  \omega_{\pm}%
\mu_{\pm,+}^{\prime}\right)  ^{2}\left\vert \left(  \Psi_{k}^{\pm,\varepsilon
}\right)  ^{\prime}(I_{\pm})\right\vert ^{2}dI_{\pm}\right)  ^{\frac{1}{2}}\\
&  \ \ \ \ +\frac{1}{k}||\Psi||_{H^{1}}\left(  \int\left(  \omega_{\pm}%
\mu_{\pm,+}^{\prime}\right)  ^{2}\left\vert \left(  \phi_{k}^{\pm,\varepsilon
}\right)  ^{\prime}(I_{\pm})\right\vert ^{2}dI_{\pm}\right)  ^{\frac{1}{2}}\\
&  \ \ \ \ +\min\left\{  \sqrt{\varepsilon}||\Psi||_{H^{1}},\varepsilon
||\Psi_{x}||_{\infty}\right\}  \left(  \int\left(  \omega_{\pm}^{2}\mu_{\pm
,+}^{\prime\prime}\right)  ^{2}\left\vert \phi_{k}^{\pm,\varepsilon
}\right\vert ^{2}dI_{\pm}\right)  ^{\frac{1}{2}}\\
&  \ \ \ \ +\min\left\{  \sqrt{\varepsilon}||\phi||_{H^{1}},\varepsilon
||\phi_{x}||_{\infty}\right\}  \left(  \int\left(  \omega_{\pm}^{2}\mu_{\pm
,+}^{\prime\prime}\right)  ^{2}\left\vert \Psi_{k}^{\pm,\varepsilon
}\right\vert ^{2}dI_{\pm}\right)  ^{\frac{1}{2}}\}\\
&  \leq\sqrt{\varepsilon}||\Psi||_{H^{1}}||\phi||_{H^{1}},
\end{align*}
by using Lemma \ref{lemma-bound-phi-k} and (\ref{estimate-phi-difference-L^2}%
), (\ref{estimate-psi-difference-L^2}). Similarly, we get
\[
\sum_{k}\left\Vert G_{k}^{\pm,-}\right\Vert _{H^{1}}\lesssim\sqrt{\varepsilon
}||\Psi||_{H^{1}}||\phi||_{H^{1}},
\]
So by Lemma \ref{lemma-hardy}, we have
\begin{align*}
\left\vert \left(  \left(  \rho(\lambda,\varepsilon)-\rho(\lambda,0)\right)
\phi,\Psi\right)  \right\vert  &  \lesssim\sum_{k}\left\Vert G_{k}^{\pm
,+}\right\Vert _{H^{1}}+\left\Vert G_{k}^{\pm,-}\right\Vert _{H^{1}}\\
&  \lesssim\sqrt{\varepsilon}||\Psi||_{H^{1}}||\phi||_{H^{1}}.
\end{align*}
\newline This finishes the proof.
\end{proof}

\begin{lemma}
\label{lemma-spectra-away-zero}Assume the conditions in Theorems
\ref{thm-main-even} and \ref{thm-main-uneven}. For any $\delta_{0}>0$, when
$\varepsilon$ is small enough, there exists no unstable eigenvalue $\lambda$
of the linearized VP (\ref{vlasoveigen}) with $\operatorname{Re}\lambda>0$ and
$\left\vert \lambda\right\vert \geq\delta_{0}$.
\end{lemma}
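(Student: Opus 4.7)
The plan is to argue by contradiction. Suppose there exist $\varepsilon_n \to 0$ and unstable eigenvalues $\lambda_n$ of the linearized VP operator at amplitude $\varepsilon_n$ with $\operatorname{Re}\lambda_n > 0$ and $|\lambda_n| \geq \delta_0$. By Lemma \ref{Lemma-operator-analytic}(i) there exist unit vectors $\phi_n \in H_0^1$ satisfying
\[
(\mathbf{I} + K(\lambda_n, \varepsilon_n))\phi_n = 0.
\]
The goal is to derive a contradiction by showing that $\mathbf{I} + K(\lambda, \varepsilon)$ is in fact invertible on the set $\{|\lambda| \geq \delta_0,\ \operatorname{Re}\lambda > 0\}$ for every sufficiently small $\varepsilon$.

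The first step is the a priori bound $|\lambda_n| \leq M$. In the homogeneous case $\varepsilon = 0$, the operator $K(\lambda, 0)$ is diagonal in the Fourier basis of $H_0^1$ with symbols
\[
\kappa_k(\lambda) \;=\; -\frac{1}{(2\pi k/P_0)^{2}} \int_{\mathbf{R}} \frac{f_{0,+}'(v) + f_{0,-}'(v)}{v - \lambda/(ik)}\, dv.
\]
Splitting the integration region according to whether $|\lambda|/|k|$ exceeds the support of $f_{0,\pm}'$, together with the uniform bound of Lemma \ref{lemma-hardy}, yields $\|K(\lambda, 0)\|_{L(H_0^1, H_0^1)} \to 0$ as $|\lambda| \to \infty$ uniformly in $\operatorname{Re}\lambda > 0$. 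Combined with Proposition \ref{prop-operator-difference}, this gives $\|K(\lambda, \varepsilon)\| < \tfrac{1}{2}$ whenever $|\lambda| \geq M$ and $\varepsilon \ll 1$, so $\mathbf{I} + K(\lambda, \varepsilon)$ is invertible by Neumann series. Thus $|\lambda_n| \leq M$; after passing to a subsequence, $\lambda_n \to \lambda_\ast$ with $\delta_0 \leq |\lambda_\ast| \leq M$ and $\operatorname{Re}\lambda_\ast \geq 0$.

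The second step is the invertibility of $\mathbf{I} + K(\lambda_\ast, 0)$. When $\operatorname{Re}\lambda_\ast > 0$, the Penrose-type hypotheses (\ref{condition-penrose}) and (\ref{condition-unequal-homo}), applied at every harmonic $k_n = 2\pi n/P_0$ with $n \geq 1$, give $1 + \kappa_k(\lambda_\ast) \neq 0$ for every $k$. When $\operatorname{Re}\lambda_\ast = 0$, so that $\lambda_\ast = ib$ with $|b| \geq \delta_0$, I would pass to the boundary by the Plemelj-Sokhotski formula. The flatness assumption $\mu_{\pm,\pm}'(\theta) \equiv 0$ for $|\theta| \leq \sigma_{\pm}$ makes $f_{0,+}' + f_{0,-}'$ vanish in a neighborhood of the origin, so the singular contribution $\mp i\pi(f_{0,+}' + f_{0,-}')(b/k_n)$ is killed for all harmonics with $|b/k_n|$ in the flat region; the remaining finitely many modes are handled by (\ref{condition-penrose}) at critical points $|v_r| \geq \sigma$ and by (\ref{condition-unequal-homo}) at $|v_r| \leq \sigma$. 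The same reasoning, combined with the uniform tail bound $|\kappa_k(\lambda)| \leq C/k^{2}$, also yields operator-norm continuity of $K(\cdot, 0)$ up to the closed right half-plane away from $\lambda = 0$.

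Finally, from the triangle inequality
\[
\|K(\lambda_n, \varepsilon_n) - K(\lambda_\ast, 0)\| \;\leq\; \|K(\lambda_n, \varepsilon_n) - K(\lambda_n, 0)\| + \|K(\lambda_n, 0) - K(\lambda_\ast, 0)\|,
\]
the first term is $O(\sqrt{\varepsilon_n})$ by Proposition \ref{prop-operator-difference} and the second tends to zero by the continuity just established. Hence $\mathbf{I} + K(\lambda_n, \varepsilon_n)$ inherits invertibility from $\mathbf{I} + K(\lambda_\ast, 0)$ for all sufficiently large $n$, contradicting the existence of the nonzero kernel element $\phi_n$. I expect the main obstacle to be the boundary case $\operatorname{Re}\lambda_\ast = 0$: norm-topology (rather than pointwise) control of $K(\lambda, 0)$ as the imaginary axis is approached from the right is delicate, and the flatness of $\mu_{\pm,\pm}$ near zero is precisely what is needed to prevent singular resonant contributions from obstructing the Plemelj limit.
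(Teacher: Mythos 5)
Your proposal is sound and rests on the same two pillars as the paper's own (very short) proof: uniform invertibility of the homogeneous operator $\mathbf{I}+K(\lambda,0)$ on $\left\{ \operatorname{Re}\lambda>0,\ \left\vert \lambda\right\vert \geq\delta_{0}\right\}$, and the $\lambda$-independent bound $\Vert K(\lambda,\varepsilon)-K(\lambda,0)\Vert\lesssim\sqrt{\varepsilon}$ of Proposition \ref{prop-operator-difference}. The difference is in how the first pillar is supplied: the paper simply forward-cites Lemma \ref{lemma-spectra-homogeneous} (parts (i)(1), (ii)(1)), whose proof obtains the uniform resolvent bound on the open half-plane by a Nyquist-type device (approximating the flat profiles by strictly decreasing ones and invoking the open mapping theorem), so boundary values on the imaginary axis are never taken; you instead re-derive the uniformity by compactness, which forces you to prove a large-$\left\vert \lambda\right\vert$ smallness estimate for the diagonal symbols, extend $K(\cdot,0)$ in operator norm to the closed half-plane away from $0$, and do a Plemelj analysis of $1+\kappa_{k}(ib)$, where the flatness of $\mu_{\pm,\pm}$ and conditions (\ref{condition-penrose}), (\ref{condition-unequal-homo}) enter. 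This is a legitimate alternative and in fact makes explicit the boundary and large-$\lambda$ control that the paper's lemma asserts without detail (your multiplier with prefactor $(2\pi k/P_{0})^{-2}$ is also the correct normalization). Two caveats: the weakest step is your interior case $\operatorname{Re}\lambda_{\ast}>0$, where the absence of zeros of $1+\kappa_{k}$ in the open half-plane is precisely the Penrose stability statement and does not follow from merely evaluating the hypotheses at critical points; you should either cite Lemma \ref{lemma-spectra-homogeneous} or reproduce its winding-number/approximation argument. Also, for harmonics $\left\vert k\right\vert \geq2$ whose resonant velocity lies in the flat zone you implicitly need $\int\frac{(f_{0,+}^{\prime}+f_{0,-}^{\prime})(v)}{v-v_{r}}dv\neq k^{2}\left(  \frac{2\pi}{P_{0}}\right)  ^{2}$, while (\ref{condition-unequal-homo}) only excludes the value $\left(  \frac{2\pi}{P_{0}}\right)  ^{2}$; this imprecision is shared with the paper's own treatment of the modes $\left\vert k\right\vert >1$, so it is not a defect specific to your argument, but it deserves a sentence (e.g.\ smallness of $\sigma$ or continuity of the integral near $v_{r}=0$) if you write the proof out in full.
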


\begin{proof}
We note that $I+\partial_{x}^{-2}\rho(\lambda,0)$ is uniformly invertible when
$\operatorname{Re}\lambda>0$ and $\left\vert \lambda\right\vert \geq\delta
_{0}$ (see Lemma \ref{lemma-spectra-homogeneous}). So the conclusion follows
from Proposition \ref{prop-operator-difference}.
\end{proof}

The following lemma is used in the proof of Theorem \ref{thm-main-even}.

\begin{lemma}
\label{realderivative}Assume $\mu_{\pm,+}=\mu_{\pm,-}=\mu_{\pm}$ (even case).
Let $\lambda\in\mathbf{C}$ and $\left\vert \lambda\right\vert <<\sqrt{\sigma
}\mathbf{,}$ $\phi,\psi\in H_{0}^{1}$ and $\phi_{x},\psi_{x}\in L^{\infty}.$
Then
\[
\left\vert (\partial_{\lambda}\left(  K(\lambda,\varepsilon)-K(\lambda
,0)\right)  \phi,\psi)\right\vert \leq C\varepsilon\left\vert \lambda
\right\vert \left\Vert \phi_{x}\right\Vert _{L^{\infty}}\left\Vert
\psi\right\Vert _{L^{\infty}}.
\]

\end{lemma}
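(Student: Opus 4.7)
The plan is to differentiate in $\lambda$ the representation of $((\rho(\lambda,\varepsilon)-\rho(\lambda,0))\phi,\psi)$ from the proof of Proposition~\ref{prop-operator-difference}, exploit the even assumption $\mu_{\pm,+}=\mu_{\pm,-}$ to produce a cancellation that extracts an explicit factor of $\lambda$, and then bound the remainder via the decomposition (\ref{defn-G-K-j}).

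In the even case the definitions of $H_k^{\pm,\pm;\varepsilon}$ and $H_k^{\pm,\pm;0}$ coincide across the $v>0$ and $v<0$ halves, so $G_k^{\pm,+}=G_k^{\pm,-}\equiv G_k^\pm$. Using the identity
\begin{equation*}
\frac{1}{(\omega-\lambda/ik)^2}-\frac{1}{(\omega+\lambda/ik)^2}=\frac{4\omega\,\lambda/ik}{(\omega^2+\lambda^2/k^2)^2},
\end{equation*}
the $\lambda$-derivative of the representation collapses to
\begin{equation*}
\partial_\lambda\bigl((\rho(\lambda,\varepsilon)-\rho(\lambda,0))\phi,\psi\bigr)=-4\lambda\sum_{k\neq0,\pm}\frac{1}{k^2}\int\frac{\omega_\pm\,G_k^\pm(\omega_\pm)}{(\omega_\pm^2+\lambda^2/k^2)^2}\,d\omega_\pm,
\end{equation*}
producing the factor of $\lambda$ required by the lemma.

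Since $\mu_\pm^\prime$ vanishes on $|e_\pm|\leq\sigma$, the support of $G_k^\pm$ lies in $\omega_\pm\gtrsim\sqrt{\sigma}$, and (\ref{bound-omega}) gives $|\omega_\pm^2+\lambda^2/k^2|\gtrsim\sigma$ uniformly in $k$ for $|\lambda|\ll\sqrt{\sigma}$. Hence the denominator contributes only a harmless $O(1/\sigma^2)$ factor, and the proof reduces to
\begin{equation*}
\sum_{k\neq0,\pm}\frac{1}{k^2}\int\omega_\pm\,|G_k^\pm|\,d\omega_\pm\lesssim\varepsilon\,\|\phi_x\|_\infty\|\psi\|_\infty.
\end{equation*}
I would establish this via the four-term decomposition (\ref{defn-G-K-j}) together with the pointwise bounds $|\phi_k^{\pm,\varepsilon}|,|\phi_k^{\pm,0}|\lesssim\tfrac{1}{k}\|\phi_x\|_\infty$ (via one $\theta$-integration by parts, using $|x_\theta|\lesssim 1$ from Lemma~\ref{epestimate}), $|\psi_k^{\pm,\varepsilon}|,|\psi_k^{\pm,0}|\lesssim\|\psi\|_\infty$, and the $\varepsilon$-smallness of the factors $\mu_\pm^\prime(e_\pm)-\mu_\pm^\prime(\tfrac{1}{2}(P_\beta/2\pi)^2\omega_\pm^2)$, $1/\omega_\pm^\prime-(P_\beta/2\pi)^2$, and $|\phi_k^{\pm,\varepsilon}-\phi_k^{\pm,0}|\lesssim\varepsilon\|\phi_x\|_\infty$ from (\ref{maximum-phi-difference}). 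Three of the four pieces in (\ref{defn-G-K-j}) then deliver a bound of order $\tfrac{\varepsilon}{k}\|\phi_x\|_\infty\|\psi\|_\infty$ after integration in $\omega_\pm$, which is summable against the $1/k^2$ weight.

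The main technical obstacle is the remaining piece $G^{\pm,+;4}$, which contains $\psi_k^{\pm,\varepsilon}-\psi_k^{\pm,0}$ and would naively demand a $\|\psi_x\|_\infty$ bound. To bypass this, I would regroup $G^{\pm,+;3}+G^{\pm,+;4}$ into the closed form $\omega_\pm\mu_\pm^\prime(\tfrac{1}{2}(P_\beta/2\pi)^2\omega_\pm^2)(P_\beta/2\pi)^2\,[\phi_k^\varepsilon\psi_k^\varepsilon-\phi_k^0\psi_k^0]$, write the bracket as an iterated double $\theta$-integral, and integrate by parts once in the $\theta$-variable attached to $\phi$, placing the derivative on $\phi$ while leaving $\psi$ in the form $\psi(x(I,\theta))-\psi(\tfrac{P_\beta}{2\pi}\theta)$ to be controlled by $|x(I,\theta)-\tfrac{P_\beta}{2\pi}\theta|\lesssim\varepsilon$ together with the $L^\infty$ bound on $\psi$. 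Combining everything and restoring the rescaling factor from $K=G_\beta\,\partial_x^{-2}\rho\,G_\beta^{-1}$ yields the claimed estimate.
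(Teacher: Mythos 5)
Your overall strategy is the same as the paper's: use the even symmetry to combine the $v>0$ and $v<0$ contributions into the $\lambda^{2}$-dependent form (\ref{density-lambda-square}), so that differentiating in $\lambda$ produces an explicit factor of $\lambda$ multiplying $\sum_{k\neq0}k^{-2}\int \omega_{\pm}\,G_{k}^{\pm}(\omega_{\pm})\,[\omega_{\pm}^{2}+\lambda^{2}/k^{2}]^{-2}d\omega_{\pm}$, then use the flatness of $\mu_{\pm}$ (support of $G_{k}^{\pm}$ in $\omega_{\pm}\gtrsim\sqrt{\sigma}$, $|\lambda|\ll\sqrt{\sigma}$) to bound the kernel uniformly in $\lambda$ and $k$, and finally invoke the $G_{k}$-estimates from the proof of Proposition \ref{prop-operator-difference} with the $L^{\infty}$ choices in (\ref{maximum-phi-difference}), (\ref{maximum-psi-difference}). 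Up to this point the proposal is essentially the paper's argument.

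However, there is a genuine gap in your treatment of the $\psi$-side. The quantity to be estimated is $(\partial_{\lambda}(K(\lambda,\varepsilon)-K(\lambda,0))\phi,\psi)$ with $K=G_{\beta}\partial_{x}^{-2}\rho\,G_{\beta}^{-1}$, and since $\partial_{x}^{-2}$ is self-adjoint on mean-zero periodic functions this reduces to pairing $\partial_{\lambda}(\rho(\lambda,\varepsilon)-\rho(\lambda,0))\phi$ against $\Psi=\partial_{x}^{-2}\psi$, not against $\psi$ itself. In the paper's proof the functions entering $G_{k}^{\pm}$ are therefore the Fourier coefficients of $\Psi$, and the troublesome difference term (your $G^{\pm,+;4}$) is controlled by (\ref{maximum-psi-difference})--(\ref{estimate-psi-difference-L^2}) at the cost of $\|\Psi_{x}\|_{\infty}=\|\partial_{x}^{-1}\psi\|_{\infty}\lesssim\|\psi\|_{\infty}$; this is exactly how the asymmetric bound $\varepsilon|\lambda|\,\|\phi_{x}\|_{\infty}\|\psi\|_{\infty}$ arises, and no regrouping trick is needed. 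Your ``main technical obstacle'' is thus an artifact of having dropped the $\partial_{x}^{-2}$ onto the wrong side, and the workaround you propose does not close it: after regrouping $G^{\pm,+;3}+G^{\pm,+;4}$ into $\phi_{k}^{\varepsilon}\psi_{k}^{\varepsilon}-\phi_{k}^{0}\psi_{k}^{0}$, the piece containing $\psi(x(I,\theta'))-\psi(\tfrac{P_{\beta}}{2\pi}\theta')$ cannot be made $O(\varepsilon)$ from ``$|x(I,\theta')-\tfrac{P_{\beta}}{2\pi}\theta'|\lesssim\varepsilon$ together with the $L^{\infty}$ bound on $\psi$'': proximity of the arguments plus a sup-norm bound gives no smallness at all (only $2\|\psi\|_{\infty}$), and using the mean value theorem instead would bring in $\|\psi_{x}\|_{\infty}$, which the stated estimate does not permit. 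Replacing $\psi$ by $\Psi=\partial_{x}^{-2}\psi$ throughout, as the paper does, removes the difficulty and yields the lemma as stated.
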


\begin{proof}
It suffices to estimate
\[
\left\vert (\partial_{\lambda}\left(  \partial_{x}^{-2}\rho(\lambda
,\varepsilon)-\partial_{x}^{-2}\rho(\lambda,0)\right)  \phi,\psi)\right\vert
\]
for $\phi,\psi\in H_{\varepsilon}^{1}$. Note that in the untrapped region, the
points $\left(  x,v\right)  $ and $\left(  x,-v\right)  $ have the same
action-angle coordinates $\left(  I_{\pm},\theta_{\pm}\right)  $. So when
$\mu_{\pm,-}=$ $\mu_{\pm,-}=\mu_{\pm}$, we can combine the integrals for
$\rho(\lambda,\varepsilon)\phi$ in (\ref{k}) as
\begin{align}
&  \sum_{k\neq0,\ \pm}\left[  \int_{v>0}\left(  \frac{\omega_{\pm}}%
{\omega_{\pm}+\frac{\lambda}{ik}}+\frac{\omega_{\pm}}{\omega_{\pm}%
-\frac{\lambda}{ik}}\right)  \mu_{_{\pm}}^{\prime}(e_{\pm})\phi_{k}^{\pm
}(I_{\pm})e^{ik\theta_{\pm}}dv\right]  \label{density-lambda-square}\\
&  =\sum_{k\neq0,\ \pm}\int_{v>0}\frac{2\omega_{\pm}^{2}\mu_{\pm}^{\prime
}(e_{\pm})}{\omega_{\pm}^{2}+\frac{\lambda^{2}}{k^{2}}}\phi_{k}^{\pm}(I_{\pm
})e^{ik\theta_{\pm}}dv,\nonumber
\end{align}
\qquad and then
\[
\partial_{\lambda}\rho(\lambda,\varepsilon)\phi=-4\sum_{k,\pm}\int_{v>0}%
\frac{\omega_{\pm}^{2}\mu_{\pm}^{\prime}(e_{\pm})\frac{\lambda}{k^{2}}%
}{\left(  \omega_{\pm}^{2}+\frac{\lambda^{2}}{k^{2}}\right)  ^{2}}\phi
_{k}^{\pm}(I_{\pm})e^{ik\theta_{\pm}}dv.
\]
Note that,
\[
(\partial_{x}^{-2}\partial_{\lambda}\rho(\lambda,\varepsilon)\phi
,\psi)=(\partial_{\lambda}\rho(\lambda,\varepsilon)\phi,\Psi),
\]
where $\Psi=\partial_{x}^{-2}\psi$. Then
\begin{align*}
&  \ \ \ \ (\partial_{\lambda}\left(  \partial_{x}^{-2}\rho(\lambda
,\varepsilon)-\partial_{x}^{-2}\rho(\lambda,0)\right)  \phi,\psi)\\
&  =(\partial_{\lambda}\left(  \rho(\lambda,\varepsilon)-\rho(\lambda
,0)\right)  \phi,\Psi)\\
&  =-4\lambda\sum_{k,\pm}\frac{1}{k^{2}}\int\frac{\omega_{\pm}^{2}}%
{[\omega_{\pm}^{2}+\frac{\lambda^{2}}{k^{2}}]^{2}}G_{k}^{\pm}(\omega_{\pm
})d\omega_{\pm},
\end{align*}
where
\[
G_{k}^{\pm}(\omega_{\pm})=G_{k}^{\pm,+}(\omega_{\pm})=G_{k}^{\pm,-}%
(\omega_{\pm})
\]
is defined in (\ref{defn-G-k}). Note that $\omega_{\pm}\gtrsim\sqrt{\sigma}$
in the support of $G_{k}^{\pm}$ and $\left\vert \lambda\right\vert
<<\sqrt{\sigma}$, so
\[
\left\Vert \frac{\omega_{\pm}^{2}}{[\omega_{\pm}^{2}+\frac{\lambda^{2}}{k^{2}%
}]^{2}}\right\Vert _{L^{2}}\leq C_{0}\text{ (independent of }\lambda).
\]
Thus
\begin{align*}
&  \left\vert (\partial_{\lambda}\left(  \rho(\lambda,\varepsilon
)-\rho(\lambda,0)\right)  \phi,\Psi)\right\vert \leq\sum_{k,\pm}%
\frac{4\left\vert \lambda\right\vert }{k^{2}}\int\frac{\omega_{\pm}^{2}%
}{[\omega_{\pm}^{2}+\frac{\lambda^{2}}{k^{2}}]^{2}}G_{k}^{\pm}(\omega_{\pm
})d\omega_{\pm}\\
\  &  \lesssim\sum_{k,\pm}\frac{4\left\vert \lambda\right\vert }{k^{2}%
}\left\Vert G_{k}^{\pm}\right\Vert _{L^{2}}\lesssim\varepsilon\left\vert
\lambda\right\vert \left\Vert \phi_{x}\right\Vert _{L^{\infty}}||\Psi
_{x}||_{L^{\infty}},
\end{align*}
by the same estimates for $\sum_{k}$ $\left\Vert G_{k}^{\pm,\pm}\right\Vert
_{L^{2}}$ as in the proof of Proposition \ref{prop-operator-difference} and by
choosing $L^{\infty}$ bounds in (\ref{estimate-phi-difference-L^2}) and
(\ref{estimate-psi-difference-L^2}). This finishes the proof of the lemma.
\end{proof}

\section{Proof of Main Theorems}

First, we study the spectrum of the linearized operator $\mathbf{I}%
+K(\lambda,0)\ $in $H_{0}^{1}$\ for flat homogeneous states.

\begin{lemma}
\label{lemma-spectra-homogeneous}

(i) (Even Case) Consider the flat homogeneous states satisfying the
assumptions in Theorem \ref{thm-main-even}. Then:

\ \ \ (1) $\mathbf{I}+K(\lambda,0)$ is invertible when $\operatorname{Re}%
\lambda>0$; for any $\delta>0,\ \left(  \mathbf{I}+K(\lambda,0)\right)  ^{-1}$
is uniformly bounded in $\left\{  \operatorname{Re}\lambda>0,\left\vert
\lambda\right\vert \geq\delta\right\}  $;

\ \ \ (2) $\mathbf{I}+K(\lambda,0)$ is analytic near $\lambda=0$; $\ker\left(
\mathbf{I}+K(0,0)\right)  =\{e^{i\frac{2\pi}{P_{0}}x},e^{-i\frac{2\pi}{P_{0}%
}x}\}$ $=\left\{  r_{1},r_{2}\right\}  $; $\mathbf{I}+K(0,0):$ $($\textbf{$I$%
}$-\mathbf{P})X\rightarrow($\textbf{$I$}$-\mathbf{P})X\ $is invertible$,$
where $\mathbf{P}$ is the project to span$\{r_{1},r_{2}\}.$

\ \ \ (3) $\det(\left(  \mathbf{I}+K(\lambda,0)\right)  r_{j},r_{i}%
)\backsim\lambda^{4}$ near $\lambda=0.$

(ii) (Uneven Case) Consider the flat homogeneous states satisfying the
assumptions in Theorem \ref{thm-main-uneven}. Then (1) and (2) are valid, and
$\det(\left(  \mathbf{I}+K(\lambda,0)\right)  r_{j},r_{i})\backsim\lambda^{2}$
near $\lambda=0.$
\end{lemma}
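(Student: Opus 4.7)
The strategy is to exploit the fact that at $\varepsilon=0$ the operator $K(\lambda,0)$ is diagonalized by the Fourier basis $\tilde r_l:=e^{il\cdot 2\pi x/P_0}$, $l\in\mathbf{Z}\setminus\{0\}$, of $H_0^1$, which reduces the whole lemma to analysing an explicit scalar dispersion function. Indeed, at $\varepsilon=0$ one has $\omega_{\pm}=\tfrac{2\pi}{P_0}|v|$ and $\theta_{\pm}=\tfrac{2\pi}{P_0}x$, so only the $k=l$ term in (\ref{k}) survives when acting on $\tilde r_l$. Combining the $v>0$ and $v<0$ contributions via the chain-rule identity $f_{0,\pm}'(v)=v\,\mu_{\pm,\pm}'(v^2/2)$ on the appropriate half-line, a direct calculation yields
\[
K(\lambda,0)\tilde r_l=-\frac{P_0^{2}}{(2\pi l)^{2}}\left(\int_{\mathbf{R}}\frac{F_0'(v)}{v-z_l}\,dv\right)\tilde r_l,\qquad z_l:=\frac{iP_0\lambda}{2\pi l},
\]
where $F_0=f_{0,+}+f_{0,-}$ in the two-species case and $F_0=f_0$ in the fixed-ion case. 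Denote by $\nu_l(\lambda)$ the corresponding eigenvalue of $\mathbf{I}+K(\lambda,0)$. Because $F_0'\equiv 0$ on $[-\sigma,\sigma]$ while $|z_l|\ll\sigma$ for $|\lambda|\ll 1$, each $\nu_l$ is analytic in $\lambda$ near $0$.

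The kernel description and invertibility of $\mathbf{I}+K(0,0)$ on $(\mathbf{I}-\mathbf{P})X$ now follow immediately: the bifurcation condition (\ref{condition-bifurcation}) gives $\nu_{\pm 1}(0)=0$ and $\nu_l(0)=1-1/l^{2}\neq 0$ for $|l|\geq 2$, so $\ker(\mathbf{I}+K(0,0))=\mathrm{span}\{r_1,r_2\}$ with $r_{1,2}=\tilde r_{\pm 1}$. For the invertibility on $\{\operatorname{Re}\lambda>0,\,|\lambda|\geq\delta\}$, the Fourier-diagonal structure reduces matters to a uniform lower bound $\inf_l|\nu_l(\lambda)|>0$. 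For $|l|\geq 2$ this is immediate since $\nu_l(\lambda)=1-O(1/l^{2})$ with the Cauchy-type integral bounded by Lemma \ref{lemma-hardy}; for $l=\pm 1$, $\nu_{\pm 1}$ is continuous on $\{\operatorname{Re}\lambda\geq 0\}$ and tends to $1$ as $|\lambda|\to\infty$, so by compactness the task is to rule out zeros in the closed half-plane minus $\{0\}$.

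\emph{The Penrose non-vanishing of $\nu_{\pm 1}$ in $\{\operatorname{Re}\lambda>0\}$ is the main obstacle.} On the imaginary axis $\lambda=ib\neq 0$ the pole $z_l=-P_0 b/(2\pi l)$ is real, and a zero of $\nu_l$ there would force $z_l$ to be a critical point of $F_0$ with $\int F_0'/(v-z_l)\,dv=(2\pi l/P_0)^{2}$; conditions (\ref{condition-penrose}) and (\ref{condition-unequal-homo}) precisely exclude this for $l=\pm 1$ (and, since the required value only grows with $l^{2}$, also for $|l|\geq 2$). To extend the non-vanishing into the open half-plane, I would invoke the classical Penrose/Nyquist argument: tracking the winding number of the curve $\nu_l$ along the boundary of a large half-disk and concluding from the absence of boundary zeros that there are no interior zeros. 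Conditions (\ref{condition-penrose-fixed ion}) and (\ref{condition-unequal-homo-fixed}) play the same role in the fixed-ion case.

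Finally, since $K(\lambda,0)$ preserves each Fourier mode, the matrix $((\mathbf{I}+K(\lambda,0))r_j,r_i)$ is diagonal, so $\det\backsim\nu_1(\lambda)\nu_{-1}(\lambda)$ near $\lambda=0$. Because $F_0'\equiv 0$ on $[-\sigma,\sigma]$ and $|z_{\pm 1}|<\sigma$, the geometric expansion $1/(v-z_l)=\sum_{n\geq 0}z_l^{n}/v^{n+1}$ is valid under the integral, giving
\[
\nu_{\pm 1}(\lambda)=-\left(\frac{P_0}{2\pi}\right)^{2}\sum_{n\geq 1}\left(\pm\frac{iP_0\lambda}{2\pi}\right)^{n}\int\frac{F_0'(v)}{v^{n+1}}\,dv.
\]
In the uneven case, (\ref{condition-nondegenerate}) ensures the $n=1$ coefficient is non-zero, so $\nu_{\pm 1}(\lambda)\backsim\lambda$ and $\det\backsim\lambda^{2}$. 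In the even case $F_0$ is even and $F_0'$ is odd, so $\int F_0'/v^{2n}\,dv=0$ for every $n\geq 1$, killing the $n=1$ and $n=3$ terms; the leading non-zero contribution comes from $n=2$, and (\ref{condition-positive}) gives $\int F_0'/v^{3}\,dv=\int[\mu_{+}'+\mu_{-}'](v^{2}/2)/v^{2}\,dv>0$, so $\nu_{\pm 1}(\lambda)\backsim\lambda^{2}$ and $\det\backsim\lambda^{4}$.
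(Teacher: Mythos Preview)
Your diagonalization of $K(\lambda,0)$ in the Fourier basis and the analysis of parts (2) and (3) are correct and match the paper's computation: both reduce to the Taylor expansion of the scalar dispersion function $\nu_{\pm 1}(\lambda)$ at $\lambda=0$, with the bifurcation condition killing the constant term, evenness (or its absence) and conditions (\ref{condition-nondegenerate}), (\ref{condition-positive}) determining the leading order, and the diagonal structure making the determinant a product $\nu_1\nu_{-1}$.

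The genuine divergence is in part (1). Your proposed Nyquist/winding-number argument is the classical route for non-flat profiles, but here it runs into a degeneracy you have not addressed: because $F_0'\equiv 0$ on $[-\sigma,\sigma]$, the image of the imaginary axis under $\nu_{\pm 1}$ is an interval on the real line near the origin, and under condition (\ref{condition-positive}) this interval \emph{contains} the value $0$ (attained at $\lambda=0$). The argument principle counts interior zeros by the winding number of the boundary image about $0$, which is undefined when $0$ lies on the image; your sentence ``concluding from the absence of boundary zeros that there are no interior zeros'' is not a correct statement of the principle. One can in principle indent the contour around $\lambda=0$, use the local behaviour $\nu_{\pm 1}\sim c\lambda$ or $\sim c\lambda^{2}$ to compute the contribution of the small semicircle, and then check that the total winding is zero---but this requires a careful computation you have not supplied, and condition (\ref{condition-unequal-homo}) alone (a $\neq$ condition, not an inequality) does not obviously control the sign of the real segment well enough to close the argument without further work.

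The paper avoids this contour analysis entirely by an approximation trick: it perturbs $\mu_\pm$ to nearby profiles $\mu_\pm^n$ that are \emph{strictly} monotone near the origin, so that the Penrose curve for $F_n$ is non-degenerate and the critical value is a proper boundary point of $F_n(\{\operatorname{Im} w>0\})$. If the original flat profile had an unstable root, analytic convergence would force nearby unstable roots for $F_n$, and the open mapping theorem would then place a full neighbourhood of the critical value inside the image---contradicting its status as a boundary point. This soft topological argument is the paper's substitute for the winding-number computation, and is precisely tailored to the flatness hypothesis.
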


\begin{proof}
(i) (\textit{Even Case) }We only consider the two-species case and the proof
for the fixed ion case is similar. Denote $\mu_{\pm,+}=\mu_{\pm,-}=\mu_{\pm}$.

Proof of (1): We note that when $\varepsilon=0,$ $\mathbf{I}+K(\lambda,0)$ is
the Penrose diagonal operator%
\begin{align}
\left(  \mathbf{I}+K(\lambda,0)\right)  e^{ik\frac{2\pi}{P_{0}}x}  &
=\{\mathbf{I}+\partial_{x}^{-2}\rho(\lambda,0)\}e^{ik\frac{2\pi}{P_{0}}%
x}\label{penrose-operator}\\
&  =\left\{  1-\frac{4\pi^{2}}{P_{0}^{2}k^{2}}\int\frac{v[\mu_{+}^{\prime}%
+\mu_{-}^{\prime}]}{v+\frac{\lambda}{ik}}dv\right\}  e^{ik\frac{2\pi}{P_{0}}%
x},k\neq0.\nonumber
\end{align}
When $\left\vert k\right\vert >1$, by the Penrose stability condition
(\ref{condition-penrose}), there is no unstable eigenvalue with
$\operatorname{Re}\lambda>0$. So we restrict to the critical case $k=1$.
Define
\begin{equation}
F(w)\equiv\frac{4\pi^{2}}{P_{0}^{2}}\int\frac{v[\mu_{+}^{\prime}+\mu
_{-}^{\prime}]}{v-w}dv. \label{penrose function}%
\end{equation}
Then linear instability is equivalent to that $F(w)=1$ for some $w$ with
$\operatorname{Im}w>0$. We define the Penrose curve: $\{\left(
a,F(a+0i)\right)  \},$ the image of the real axis in the complex plane. For
the flat profiles, the Penrose curve is an interval in the real axis near
$\left(  1,0\right)  $, and under the condition (\ref{condition-positive}) it
passes beyond $(1,0)$ to the right. Now we prove the linear stability of
homogeneous equilibria. We assume the contrary, there is $w_{0}=a+bi$ with
$a\in\mathbf{R},\ b>0$ such that $F(w_{0})=1.$ We now approximate $\mu_{\pm}$
by non-constant $\mu_{\pm}^{n}$ near $0,$ such that $\mu_{\pm}^{n}%
\rightarrow\mu_{\pm}$ in some strong topology, while $\mu_{\pm}^{n\prime
}(0)=0,$ but $\mu_{\pm}^{n\prime}(\xi)<0$ \ \ for $\xi>0.$ We define
\[
F_{n}(w)\equiv\frac{4\pi^{2}}{P_{0}^{2}}\int\frac{v[\mu_{+}^{n\prime}+\mu
_{-}^{n\prime}]}{v-w}dv.
\]
Now the Penrose curves for $F_{n}(w)$ break the constant interval so that
$(1,0)$ is a proper point of the boundary, and from (\ref{condition-penrose}),
$(1,0)$ is the single largest intersection of Penrose curves with real axis.
Since $F(\lambda_{0})=1,$ and $F_{n}(\lambda)$ are analytic near $\lambda
_{0},$ $\lim F_{n}(\lambda)=F(\lambda)$ for $\lambda$ near $\lambda_{0}$ we
therefore conclude that there is $\lambda_{n}$ near $\lambda_{0}$ such that
$F_{n}(\lambda_{n})=1.$ By the open mapping theorem, there is a full
neighborhood $B_{n}$ of $(1,0)$ such that $B_{n}\in F_{n}(\operatorname{Im}%
\lambda>0).\ $This contradicts to the fact that $(1,0)$ is a proper boundary
point. The uniform boundedness of $\ \left(  \mathbf{I}+K(\lambda,0)\right)
^{-1}$ on $\left\{  \operatorname{Re}\lambda>0,\left\vert \lambda\right\vert
\geq\delta\right\}  $ follows from the Penrose condition
(\ref{condition-penrose}) and (\ref{condition-unequal-homo}).

The proof of (2) is by straightforward calculations. To show (3), we note that
$F\left(  0\right)  =F^{\prime}\left(  0\right)  =0$ and by
(\ref{condition-positive})
\begin{equation}
F^{\prime\prime}\left(  0\right)  =\frac{8\pi^{2}}{P_{0}^{2}}\int\frac{\mu
_{+}^{\prime}+\mu_{-}^{\prime}}{v^{2}}dv>0. \label{penrose-second-derivative}%
\end{equation}
Define the 2 by 2 matrix
\[
A\left(  \lambda,0\right)  =\left(  \mathbf{I}+K(\lambda,0)r_{j},r_{l}\right)
,\ j,l=1,2.
\]
Then near $\lambda=0,$
\[
\det A\left(  \lambda,0\right)  =F\left(  \frac{\lambda}{i}\right)  F\left(
-\frac{\lambda}{i}\right)  \backsim\lambda^{4}.
\]

(ii) (\textit{Uneven Case)} The proof of (1) and (2) is similar. Finally, we
note that $F\left(  0\right)  =0,\ F^{\prime}\left(  0\right)  \neq0$ (by
(\ref{condition-nondegenerate})), so $\det A\left(  \lambda,0\right)
\backsim\lambda^{2}$ near $\lambda=0.$
\end{proof}

\begin{proof}
[Proof of Theorem \ref{thm-main-uneven}]We only consider the two species case
since the proof for the fixed ion case is similar. First, we study the
eigenvalues near $0$. Combining Lemma \ref{lemma-spectra-homogeneous} (ii) and
Lemma \ref{lemma-stability-spectra}, we know that the number of eigenvalues
for $\mathbf{I}+K(\lambda,\varepsilon)$ near $0\ $is at most two when
$\varepsilon$ is small. By Lemma \ref{lemma-vlasov-spectra}, the nonzero
eigenvalues must appear in pairs. But clearly the translation invariance leads
to a zero eigenvector $[\mu_{+,\pm}^{\prime}(e_{+})\beta_{x},-\mu_{-,\pm
}^{\prime}(e_{-})\beta_{x},\beta_{x}]$ for the linearized Vlasov-Poisson
equation (\ref{vlasoveigen}) (\ref{poissoneigen}), which gives rise to an zero
eigenvector for $\mathbf{I}+K(0,\varepsilon)$ for all $\varepsilon>0$. So we
conclude that there exists no nonzero eigenvalues near $0$, besides the
translation mode. That is, there exists $\delta_{0}>0$, such that there is no
nonzero eigenvalue $\lambda\ $with $\left\vert \lambda\right\vert \leq
\delta_{0}\ $for $\mathbf{I}+K(\lambda,\varepsilon)$, when $\varepsilon<<1$.
By Lemma \ref{lemma-spectra-away-zero}, when $\varepsilon$ is small enough,
there exists no unstable eigenvalue $\lambda$ of with $\operatorname{Re}%
\lambda>0$ and $\left\vert \lambda\right\vert \geq\delta_{0}$. This proves the
spectral stability of small BGK waves.
\end{proof}

Lastly, we prove Theorem \ref{thm-main-even} for the even case.

\begin{proof}
[Proof of Theorem \ref{thm-main-even}]We give the detailed proof for the two
species case and make some comments on the fixed ion case at the end. Denote
$\mu_{\pm,+}=\mu_{\pm,-}=\mu_{\pm}$. Again by Lemma
\ref{lemma-spectra-away-zero}, it suffices to exclude unstable eigenvalues
near $0$. First, it follows from Lemma \ref{lemma-spectra-homogeneous} (i) and
Lemma \ref{lemma-stability-spectra} that when $\varepsilon<<1,$ there exist at
most four eigenvalues near $\lambda=0$ for $\mathbf{I}+K(\lambda,\varepsilon
)$. By Lemma \ref{lemma-vlasov-spectra}, besides the translation mode, any
nonzero eigenvalue near $0$ must be either purely imaginary or real, since any
eigenvalue with both nonzero real and imaginary parts must appear in quadruple.

Now we exclude real eigenvalues to show spectral stability. Assume $\lambda
\in\mathbf{R}$ and $\left\vert \lambda\right\vert <<1.\ $First, we show that
when $\mu_{\pm}^{\prime}(e_{\pm})$ are even, the operator $\rho(\lambda
,\varepsilon)\phi$ preserves parity. For an even function $\phi\left(
x\right)  \in H_{\varepsilon}^{1}$, let $\phi\left(  x\right)  =\sum
_{k\in\mathbf{Z}}\phi_{k}^{\pm}(I_{\pm})e^{ik\theta_{\pm}}$. Then for
$k\neq0,\ \phi_{k}^{\pm}(I_{\pm})=\phi_{-k}^{\pm}(I_{\pm})$ for untrapped
particles, since $\left(  x,v\right)  $ and $\left(  -x,v\right)  $ have
action-angle coordinates $\left(  I,\theta\right)  $ and $\left(
I,2\pi-\theta\right)  $ respectively. Thus by (\ref{density-lambda-square}),
\begin{align}
\rho(\lambda,\varepsilon)\phi &  =\sum_{k\neq0,\ \pm}\int_{v>0}\frac
{2\omega_{\pm}^{2}\mu_{\pm}^{\prime}(e_{\pm})}{\omega_{\pm}^{2}+\frac
{\lambda^{2}}{k^{2}}}\phi_{k}^{\pm}(I_{\pm})e^{ik\theta_{\pm}}%
dv\label{rhoreal}\\
&  =4\sum_{k>0,\ \pm}\int_{v>0}\frac{\omega_{\pm}^{2}\mu_{\pm}^{\prime}%
(e_{\pm})\phi_{k}^{\pm}(I_{\pm})}{\omega_{\pm}^{2}+\frac{\lambda^{2}}{k^{2}}%
}\cos k\theta_{\pm}dv,\nonumber
\end{align}
is again an even function in $x$. Similarly, it can be shown that when $\phi$
is odd, so is $\rho(\lambda,\varepsilon)\phi$. Thus, we can consider
$\mathbf{I}+K(\lambda,\varepsilon)$ in the even and odd spaces separately. In
the odd subspace $H_{\text{odd }}^{1}$of $H_{0}^{1}$, we notice that
$\mathbf{I}+K(0,0)$ has a 1D kernel spanned by $r_{1}=\sin\frac{2\pi}{P_{0}}%
x$. By (\ref{penrose-operator}) and noticing that for even profiles the
function $F\left(  w\right)  $ defined by (\ref{penrose function}) is even,
so
\begin{align}
\left(  \left(  \mathbf{I}+K(\lambda,0)\right)  r_{1},r_{1}\right)   &
=(\left(  \mathbf{I}+\partial_{x}^{-2}\rho(\lambda,0)\right)  r_{1}%
,r_{1})\label{derivative-penrose-odd}\\
&  =1-F\left(  i\lambda\right)  \backsim a_{0}\lambda^{2}.\nonumber
\end{align}
where $a_{0}=F^{\prime\prime}\left(  0\right)  >0$ (see
(\ref{penrose-second-derivative})) by the condition (\ref{condition-positive}%
). We also note that for any $\varepsilon>0$, $\mathbf{I}+K(0,\varepsilon)$
has a translation kernel which is odd. So by Corollary
\ref{cor-stability-spectra} and Lemma \ref{lemma-vlasov-spectra}, there is no
nonzero eigenvalue $\lambda$ near zero for $\mathbf{I}+K(\lambda,\varepsilon)$
when $\varepsilon<<1$.

We can now restrict ourselves to the even subspace $H_{\text{even }}^{1}$of
$H_{0}^{1}$, so that $\mathbf{I}+K(0,0)$ has 1D kernel spanned by $r_{0}%
=\cos\frac{2\pi}{P_{0}}x$. We note that (\ref{rhoreal}) is valid for all
$\lambda\in\mathbf{C}$ with $|\lambda|<<1$. So letting $\nu=\lambda^{2},$ we
may rewrite $K(\lambda,\varepsilon)$ in terms of $\nu$. That is, we define
\[
\bar{\rho}(\nu,\varepsilon)\phi=4\sum_{k>0,\ \pm}\int_{v>0}\frac{\omega_{\pm
}^{2}\mu_{\pm}^{\prime}(e_{\pm})\phi_{k}^{\pm}(I_{\pm})}{\omega_{\pm}%
^{2}+\frac{\nu}{k^{2}}}\cos k\theta_{\pm}dv,
\]
and
\[
\bar{K}(\nu,\varepsilon)=G_{\beta}\partial_{x}^{-2}\bar{\rho}(\nu
,\varepsilon)G_{\beta}^{-1}.
\]
Then $\bar{K}(\nu,\varepsilon)$ is analytic for $\nu$ near zero. Suppose
$(\mathbf{I}+K(\lambda,\varepsilon))r=0$ or equivalently $(\mathbf{I}+\bar
{K}(\nu,\varepsilon))r=0,$ for $r\in H_{\text{even }}^{1}$and let
$\mathbf{P}r=ar_{0},$ where $\mathbf{P}$ is the projection to $\{r_{0}\}$. As
in the proof of Lemma \ref{lemma-stability-spectra}, we use the
Liapunov-Schmidt reduction to solve $(\mathbf{I}+\bar{K}(\nu,\varepsilon
))r=0\ $to obtain
\[
\left(  I+\bar{K}(\nu,0)\right)  r+\left(  \bar{K}(\nu,\varepsilon)-\bar
{K}(\nu,0)\right)  \{Z^{\perp}(\nu,\varepsilon)+\mathbf{I}\}\mathbf{P}r=0,
\]
which is equivalent to
\begin{align}
0 &  =\left(  \left(  \mathbf{I}+\bar{K}(\nu,0)\right)  r_{0},r_{0}\right)
\label{implicit}\\
&  \ +\left(  \left(  \bar{K}(\nu,\varepsilon)-\bar{K}(\nu,0)\right)  \left[
Z^{\perp}(\nu,\varepsilon)+\mathbf{I}\right]  r_{0},r_{0}\right)  ,\nonumber
\end{align}
where
\begin{align*}
Z^{\perp}(\nu,\varepsilon) &  \equiv-[\mathbf{I}+\left(  \mathbf{I}+\bar
{K}(\nu,0)\right)  ^{-1}\left(  \mathbf{I}-\mathbf{P}\right)  \left(  \bar
{K}(\nu,\varepsilon)-\bar{K}(\nu,0)\right)  ]^{-1}\\
&  \left(  \mathbf{I}+\bar{K}(\lambda,0)\right)  ^{-1}\left(  \mathbf{I}%
-\mathbf{P}\right)  [\bar{K}(\nu,\varepsilon)-\bar{K}(\nu,0)]
\end{align*}
as defined in (\ref{definition-Z-per}) is analytic in $\nu$ near $0$. Since
the eigenvalue $\lambda$ must be real or pure imaginary, so it suffices to
study the equation (\ref{implicit}) for $\nu=\lambda^{2}\in\mathbf{R}$. As in
(\ref{derivative-penrose-odd}), we have
\begin{align}
\left(  \left(  \mathbf{I}+\bar{K}(\nu,0)\right)  r_{0},r_{0}\right)   &
=\left(  \left(  \mathbf{I}+\bar{K}(\nu,0)\right)  r_{0},r_{0}\right)
\label{derivative-penrose-even}\\
&  =1-F\left(  i\lambda\right)  \backsim a_{0}\lambda^{2}=a_{0}\nu\text{,
}\nonumber
\end{align}
with $a_{0}>0$. So from the implicit function theorem, for $\varepsilon<<1$
there is an unique $\nu(\varepsilon)\in\mathbf{R\ }$so that (\ref{implicit})
is valid, with $\nu(0)=0$. There is a growing mode $\lambda\left(
\varepsilon\right)  >0$ \textit{if and only if }$\nu(\varepsilon
)=\lambda\left(  \varepsilon\right)  ^{2}>0,$ and there is a purely imaginary
eigenvalue $0\neq\lambda\left(  \varepsilon\right)  \in i\mathbf{R}$
\textit{if and only if }$\nu(\varepsilon)=\lambda\left(  \varepsilon\right)
^{2}<0$. By (\ref{derivative-penrose-even}) and (\ref{implicit}) we need
\[
S(\nu,\varepsilon)\equiv\left(  \left(  \bar{K}(\nu,\varepsilon)-\bar{K}%
(\nu,0)\right)  \left(  Z^{\perp}(\nu,\varepsilon)+\mathbf{I}\right)
r_{0},r_{0}\right)  <0
\]
for $\varepsilon<<1$ to ensure that $\nu(\varepsilon)>0$. Note that
\[
S(\nu,\varepsilon)=[S(\nu,\varepsilon)-S(0,\varepsilon)]+S(0,\varepsilon)
\]
and
\begin{equation}
\left\vert \lbrack S(\nu,\varepsilon)-S(0,\varepsilon)]\right\vert \leq
\max_{v^{\prime}\in\left(  0,\nu\right)  }\left\vert \partial_{\nu}%
S(\nu^{\prime},\varepsilon)\right\vert \left\vert \nu\right\vert
,\label{difference-s}%
\end{equation}
where
\begin{align*}
\partial_{\nu}S(\nu,\varepsilon) &  =\left(  \partial_{\nu}\left[  \bar{K}%
(\nu,\varepsilon)-\bar{K}(\nu,0)\right]  \left[  Z^{\perp}(\nu,\varepsilon
)+\mathbf{I}\right]  r_{0},r_{0}\right)  \\
&  +\left(  \left[  \bar{K}(\nu,\varepsilon)-\bar{K}(\nu,0)\right]
\partial_{\nu}Z^{\perp}(\nu,\varepsilon)r_{0},r_{0}\right)  .
\end{align*}
For $\nu=\lambda^{2}$ near $0$, by lemma \ref{realderivative},
\begin{align*}
&  \ \ \ \ \ \ \left\vert \left(  \partial_{\nu}\left(  \bar{K}(\nu
,\varepsilon)-\bar{K}(\nu,0)\right)  \left(  Z^{\perp}(\nu,\varepsilon
)+\mathbf{I}\right)  r_{0},r_{0}\right)  \right\vert \\
&  =\frac{1}{2\left\vert \lambda\right\vert }\left\vert \left(  \partial
_{\lambda}\left(  K(\lambda,\varepsilon)-K(\lambda,0)\right)  \left(
Z^{\perp}(\nu,\varepsilon)+\mathbf{I}\right)  r_{0},r_{0}\right)  \right\vert
\\
&  \lesssim\varepsilon\left\Vert \left(  Z^{\perp}(\nu,\varepsilon
)+\mathbf{I}\right)  r_{0}\right\Vert _{W^{1,\infty}}\left\Vert r_{0}%
\right\Vert _{L^{\infty}}\lesssim\varepsilon,
\end{align*}
and by Proposition \ref{prop-operator-difference},
\begin{align*}
\   & \left\vert \left(  \left(  \bar{K}(\nu,\varepsilon)-\bar{K}%
(\nu,0)\right)  \partial_{\nu}Z^{\perp}(\nu,\varepsilon)r_{0},r_{0}\right)
\right\vert \\
& =\left\vert \left(  \left(  K(\lambda,\varepsilon)-K(\lambda,0)\right)
\partial_{\nu}Z^{\perp}(\nu,\varepsilon)r_{0},r_{0}\right)  \right\vert
\lesssim\sqrt{\varepsilon}.
\end{align*}
Noting that $\nu(\varepsilon)=\nu^{\prime}(0)\varepsilon+\frac{\nu
^{\prime\prime}(0)\varepsilon^{2}}{2!}+\cdots=O\left(  \varepsilon\right)  $,
thus a combination of above estimates and (\ref{difference-s}) gives
\[
\left\vert S(\nu,\varepsilon)-S(0,\varepsilon)\right\vert =O\left(
\varepsilon^{\frac{3}{2}}\right)  .
\]
Since $S(0,0)=0,$ so a sharp \textit{stability criterion} is for the first
nonzero derivative:
\[
\frac{d}{d\varepsilon}S(0,\varepsilon)|_{\varepsilon=0}<0\ \ \ \text{for
instability,}%
\]
and
\[
\frac{d}{d\varepsilon}S(0,\varepsilon)|_{\varepsilon=0}>0\ \text{\ for
stability.}%
\]
Moreover, under the stability condition $\frac{d}{d\varepsilon}S(0,\varepsilon
)|_{\varepsilon=0}>0$, from our proof we have a pair of imaginary eigenvalues
bifurcating from $\varepsilon=0.$ Note that since $Z^{\perp}(0,0)=0,$%
\[
\frac{d}{d\varepsilon}S(0,\varepsilon)|_{\varepsilon=0}=\frac{d}{d\varepsilon
}(G_{\beta}\partial_{x}^{-2}\rho(0,\varepsilon)G_{\beta}^{-1}r_{0}%
,r_{0})|_{\varepsilon=0}.
\]
For $r_{0}=\cos(\frac{2\pi}{P_{0}}x),$%
\[
\partial_{x}^{-2}G_{\beta}^{-1}r_{0}=\partial_{x}^{-2}\cos(\frac{P_{0}%
}{P_{\beta}}\frac{2\pi}{P_{0}}x)=-\frac{P_{\beta}^{2}}{4\pi^{2}}\cos
(\frac{2\pi}{P_{\beta}}x).
\]
Notice that for $\phi\in H_{\varepsilon}^{1},$
\[
\rho(0,\varepsilon)\phi=\int_{\mathbf{R}}\left(  \mu_{+}^{\prime}(e_{+}%
)+\mu_{-}^{\prime}(e_{-})\right)  dv\ \phi.
\]
So
\begin{align*}
S(0,\varepsilon) &  =\frac{P_{0}}{P_{\beta}}(\partial_{x}^{-2}\rho
(0,\varepsilon)G_{\beta}^{-1}r_{0},G_{\beta}^{-1}r_{0})\\
&  =-\frac{P_{0}P_{\beta}}{4\pi^{2}}\int_{0}^{P_{\beta}}\int\left(  \mu
_{+}^{\prime}(e_{+})+\mu_{-}^{\prime}(e_{-})\right)  dv\ \cos^{2}(\frac{2\pi
}{P_{\beta}}x)dx\\
&  =-\frac{P_{\beta}^{2}}{4\pi^{2}}\int_{0}^{P_{0}}\int[\mu_{+}^{\prime
}\left(  \frac{1}{2}v^{2}+\beta\left(  \frac{P_{\beta}}{P_{0}}x\right)
\right)  \\
&  \ \ \ \ \ +\mu_{-}^{\prime}\left(  \frac{1}{2}v^{2}-\beta\left(
\frac{P_{\beta}}{P_{0}}x\right)  \right)  ]\ dv\ \cos^{2}(\frac{2\pi}{P_{0}%
}x)dx.
\end{align*}
Notice that by (\ref{leading-order-beta}),
\[
\frac{d}{d\varepsilon}\beta\left(  \frac{P_{\beta}}{P_{0}}x\right)
|_{\varepsilon=0}=\cos\left(  \frac{2\pi}{P_{0}}x\right)  .
\]
So $\ $
\begin{align*}
&  \ \ \ \ \ \frac{d}{d\varepsilon}S(0,\varepsilon)|_{\varepsilon=0}\\
&  =-\frac{P_{0}P_{\beta}^{^{\prime}}\left(  0\right)  }{2\pi^{2}}\int%
_{0}^{P_{0}}\int\left(  \mu_{+}^{\prime}\left(  \frac{1}{2}v^{2}\right)
+\mu_{-}^{\prime}\left(  \frac{1}{2}v^{2}\right)  \right)  dv\cos^{2}%
(\frac{2\pi}{P_{0}}x)dx\\
&  \ \ \ \ \ \ \ -\frac{P_{0}^{2}}{4\pi^{2}}\int_{0}^{P_{0}}\int\left(
\mu_{+}^{\prime\prime}\left(  \frac{1}{2}v^{2}\right)  -\mu_{-}^{\prime}%
(\frac{1}{2}v^{2}\right)  dv\cos^{3}(\frac{2\pi}{P_{0}}x)dx\\
&  =-\frac{P_{0}^{2}P_{\beta}^{^{\prime}}\left(  0\right)  }{4\pi^{2}}%
\int\left(  \mu_{+}^{\prime}\left(  \frac{1}{2}v^{2}\right)  +\mu_{-}^{\prime
}\left(  \frac{1}{2}v^{2}\right)  \right)  dv=-P_{\beta}^{^{\prime}}\left(
0\right)  .
\end{align*}
So we deduce the stability criterion that $P_{\beta}^{\prime}\left(  0\right)
<0$ for stability and $P_{\beta}^{\prime}\left(  0\right)  >0$ for
instability. Moreover, in the stable case $\left(  P_{\beta}^{\prime
}>0\right)  $, there exists a pair of nonzero purely imaginary eigenvalues.

Lastly, we calculate $P_{\beta}^{\prime}\left(  0\right)  $. Recall
\[
-\beta_{xx}=h(\beta)=\int\mu_{+}(\frac{v^{2}}{2}+\beta)-\int\mu_{-}%
(\frac{v^{2}}{2}-\beta)=H^{\prime}(\beta).
\]
Since $H^{\prime}(0)=0,$ $H^{\prime\prime}(0)=(\frac{2\pi}{P_{0}})^{2},$
\[
H^{\prime\prime\prime}(0)=\int\mu_{+}^{\prime\prime}(\frac{v^{2}}{2})-\int%
\mu_{-}^{\prime\prime}(\frac{v^{2}}{2})=\int\frac{\mu_{+}^{^{\prime}}%
(\frac{v^{2}}{2})}{v^{2}}-\int\frac{\mu_{-}^{\prime}(\frac{v^{2}}{2})}{v^{2}%
}.
\]
Let $a_{2}=(\frac{2\pi}{P_{0}})^{2},\ a_{3}=H^{\prime\prime\prime}(0)$, and
define $g\left(  u\right)  =2H\left(  u\right)  -uh\left(  u\right)  $, then
\begin{align*}
P_{\beta}^{\prime}\ |_{\beta=0}  &  =\lim_{\beta\rightarrow0}\frac{4}{\beta
}\int_{0}^{\beta}\frac{g\left(  \beta\right)  -g\left(  u\right)  }{\left(
\sqrt{2\left(  H(\beta)-H(u)\right)  }\right)  ^{3}}du\\
&  =\lim_{\beta\rightarrow0}\frac{4}{\beta}\frac{1}{6}a_{3}\int_{0}^{\beta
}\frac{\beta^{3}-u^{3}}{\left(  a_{2}\left(  \beta^{2}-u^{2}\right)  \right)
^{\frac{3}{2}}}du\\
&  =\frac{1}{6}a_{3}\int_{0}^{1}\frac{1-u^{3}}{\left(  a_{2}\left(
1-u^{2}\right)  \right)  ^{\frac{3}{2}}}du.
\end{align*}
So we deduce instability when
\[
\int\frac{\mu_{+}^{\prime}(\frac{v^{2}}{2})}{v^{2}}-\int\frac{\mu_{-}^{\prime
}(\frac{v^{2}}{2})}{v^{2}}>0\ \left(  \text{equivalently }P_{\beta}^{\prime
}>0\right)  ,
\]
$\ $and stability when
\[
\int\frac{\mu_{+}^{\prime}(\frac{v^{2}}{2})}{v^{2}}-\int\frac{\mu_{-}^{\prime
}(\frac{v^{2}}{2})}{v^{2}}<0\ \ \left(  \text{equivalently }P_{\beta}^{\prime
}<0\right)  .
\]

The proof for the fixed ion case is similar. Here, the condition
(\ref{condition-positive-fixed ion}) plays a duel role. It is the stability
condition for the homogeneous state $\left(  \mu\left(  \frac{1}{2}%
v^{2}\right)  ,0\right)  $ at the critical period $P_{0}$, and is also
equivalent to the instability condition $P_{\beta}^{\prime}>0$ for small BGK waves.
\end{proof}

\section{Examples}

1. The flat profiles satisfying conditions in Theorem \ref{thm-main-uneven}
and \ref{thm-main-even} can be easily constructed. We give an example for the
uneven and one species case. Let
\[
f\left(  v\right)  =c_{0}\left(  e^{-\left(  v-v_{1}\right)  ^{2}}+e^{-\left(
v+v_{1}\right)  ^{2}}\right)  ,
\]
where $v_{1}>0$ and $c_{0}>0$ is such that $\int f\left(  v\right)  dv=1$.
Note that $f\left(  v\right)  $ has two maximum at $v_{1}$ and $-v_{1}$ and
one minimum at $0$. If $v_{1}$ is large enough, then
\[
\int\frac{f^{\prime}\left(  v\right)  }{v\pm v_{1}}dv<\int\frac{f^{\prime
}\left(  v\right)  }{v}dv\text{ and }\int\frac{f^{\prime}\left(  v\right)
}{v}dv>0.
\]
We now modify $f\left(  v\right)  $ slightly to get $f_{0}\left(  v\right)  $
which is flat in a small interval $\left[  -\sigma,\sigma\right]  $ and
slightly uneven such that $\int\frac{f_{0}^{\prime}\left(  v\right)  }{v^{2}%
}dv\neq0$. Let $v_{a}$ and $v_{b}$ be the two maximum points of $f_{0}\left(
v\right)  $, then we still have
\[
\max\left\{  \int\frac{f_{0}^{\prime}\left(  v\right)  }{v-v_{a}}dv,\int%
\frac{f_{0}^{\prime}\left(  v\right)  }{v-v_{b}}dv\right\}  <\int\frac
{f_{0}^{\prime}\left(  v\right)  }{v}dv=\left(  \frac{2\pi}{P_{0}}\right)
^{2}.
\]
Since $\int\frac{f_{0}^{\prime}\left(  v\right)  }{v^{2}}dv\neq0$, when
$\sigma$ is small, the function $F\left(  w\right)  =\int\frac{f_{0}^{\prime
}\left(  v\right)  }{v-w}dv$ is monotone in $\left[  -\sigma,\sigma\right]  $
and the condition (\ref{condition-unequal-homo-fixed}) is also satisfied.
Thus, the profiles $f_{0}\left(  v\right)  $ satisfies all the conditions in
Theorem \ref{thm-main-uneven}. The small BGK waves bifurcating near $\left(
f_{0}\left(  v\right)  ,0\right)  $ are spectrally stable.

2. For the even case, the condition $\int\frac{[\mu_{+}^{\prime}+\mu
_{-}^{\prime}]}{v^{2}}dv>0$ is always true when the flatness width
$\sigma_{\pm}$ are small enough. Consider $\mu_{\pm}$ to be the modification
(flattening near $0$) of non-flat profiles with local minimum at $0$. Let
$\sigma_{0}=\max\left\{  \sigma_{+},\sigma_{-}\right\}  $ and take an interval
$I=\left[  2\sigma_{0},a\right]  $ such that $\mu_{\pm}^{\prime}\geq c_{0}>0$
in $I$. Then
\[
\int_{I}\frac{[\mu_{+}^{\prime}+\mu_{-}^{\prime}]}{v^{2}}dv\geq2c_{0}\int%
_{I}\frac{1}{v^{2}}=2c_{0}\left(  \frac{1}{2\sigma_{0}}-\frac{1}{a}\right)  .
\]
Since $\mu_{\pm}^{\prime}\geq0$ in $\left[  0,2\sigma_{0}\right]  $,
\begin{align*}
\int\frac{[\mu_{+}^{\prime}+\mu_{-}^{\prime}]}{v^{2}}dv  &  =2\int_{v>0}%
\frac{[\mu_{+}^{\prime}+\mu_{-}^{\prime}]}{v^{2}}dv\\
&  \geq c_{0}\left(  \frac{1}{2\sigma_{0}}-\frac{1}{a}\right)  +\int%
_{v>a}\frac{[\mu_{+}^{\prime}+\mu_{-}^{\prime}]}{v^{2}}dv\\
&  >0,
\end{align*}
when $\sigma_{0}$ is small enough. By the same argument, both stability and
instability conditions
\begin{equation}
\int v^{-2}[\mu_{+}^{\prime}(\frac{v^{2}}{2})-\mu_{-}^{\prime}(\frac{v^{2}}%
{2})]>0\,\ \left(  <0\right)  \label{sign}%
\end{equation}
can be satisfied by choosing different $\sigma_{\pm}$ for $\mu_{\pm}$. Indeed,
when $1\gg\sigma_{-}\gg\sigma_{+}$ ($\sigma_{-}\ll\sigma_{+}\ll1$), we get
$+\left(  -\right)  $ sign in (\ref{sign}).

3. For the one species and even case, by the same arguments as above, the
instability condition $\int\frac{\mu^{\prime}\left(  \frac{1}{2}v^{2}\right)
}{v^{2}}dv>0$ is always satisfied when the flatness width is small. So BGK
waves near $\mu\left(  \frac{1}{2}v^{2}\right)  $ are linearly unstable. But
for a slightly uneven profile $f_{0}\left(  v\right)  $ close to $\mu\left(
\frac{1}{2}v^{2}\right)  $, by Theorem \ref{thm-main-uneven}, the BGK waves
near $f_{0}\left(  v\right)  $ are linearly stable. These suggest that there
exist a transition from stability to instability when we increase the
amplitude of BGK waves bifurcated from $f_{0}\left(  v\right)  $ up to the one
close to an unstable BGK wave bifurcated from $\mu\left(  \frac{1}{2}%
v^{2}\right)  $.

4. In Theorems \ref{thm-main-even} and \ref{thm-main-uneven}, we construct
linearly stable BGK waves for both even and uneven cases. However, there is a
significant difference in their spectra which must lie in the imaginary axis.
For the uneven case, there is no nonzero imaginary eigenvalue of the
linearized VP operator at a stable small BGK wave. In contrast, for the even
case, there exists a pair of nonzero imaginary eigenvalues for the stable
small BGK waves. In particular, these imply that there is no linear damping
for the even BGK waves due to the existence of nonzero time periodic solutions
of linearized Vlasov-Poisson equation. For the uneven case, the linear damping
is under investigation (\cite{guo-lin-damping}).

\begin{center}
{\Large Acknowledgement}
\end{center}

Yan Guo's research is supported in part by NSF grant DMS-1209437, Chinese NSF
grant \#10828103 and a Simon Research Fellowship. Zhiwu Lin is supported in
part by a NSF grant DMS-1411803. The authors thank referees for their
extensive comments which help to improve the presentation of the paper.

\end{document}